\renewcommand{\d}{\partial}
\newcommand{\wt}[1]{\widetilde{#1}}
\newcommand{\wh}[1]{\widehat{#1}}
\newcommand{\cw}[1]{\check{#1}}
\newcommand{\eps}{\epsilon}
\newcommand{\veps}{\varepsilon}
\newcommand{\vphi}{\varphi}
\newcommand{\al}{\alpha} 
\newcommand{\ze}{\zeta} 
\newcommand{\ga}{\gamma}
\newcommand{\de}{\delta}
\newcommand{\la}{\lambda}
\newcommand{\om}{\omega}
\newcommand{\te}{\theta}
\newcommand{\sig}{\sigma}
\newcommand{\ka}{\kappa}
\newcommand{\Te}{\Theta}
\newcommand{\Ga}{\Gamma}
\newcommand{\Om}{\Omega}
\newcommand{\De}{\Delta}
\newcommand{\cB}{\mathcal{B}}
\newcommand{\cE}{\mathcal{E}}
\newcommand{\cP}{\mathcal{P}}
\newcommand{\cF}{\mathcal{F}}
\newcommand{\supp}{\mbox{supp }}
\newcommand{\rB}{{\bf B}\,}
\newcommand{\bR}{\mathbb{R}}
\newcommand{\bC}{\mathbb{C}}
\newtheorem{thm}{Theorem}
\newtheorem{prop}[thm]{Proposition}
\newtheorem{lem}[thm]{Lemma}
\newtheorem{cor}[thm]{Corollary}
\theoremstyle{definition}
\newtheorem{defn}[thm]{Definition}
\newtheorem{remark}[thm]{Remark}
\numberwithin{thm}{section}
\numberwithin{equation}{section}
\renewcommand{\[}{\begin{equation}}
\renewcommand{\]}{\end{equation}}
\newcommand{\wed}{\wedge}
\newcommand{\ov}[1]{\overline{#1}}
\newcommand{\ul}[1]{\underline{#1}}
\title[Monge-Amp\`ere equations on Hermitian manifolds with boundary]{The Dirichlet problem for the Monge-Amp\`ere equation on Hermitian manifolds with boundary
}
\author{S\l awomir Ko\l odziej and Ngoc Cuong Nguyen}
\address{Faculty of Mathematics and Computer Science, Jagiellonian University, \L ojasiewicza 6, 30-348 Krak\'ow, Poland}
\email{slawomir.kolodziej@im.uj.edu.pl}
\address{Department of Mathematical Sciences, KAIST, 291 Daehak-ro, Yuseong-gu, Daejeon 34141, South Korea}
\email{cuongnn@kaist.ac.kr}
\begin{document}
	\maketitle
	\begin{abstract} We study weak quasi-plurisubharmonic solutions to the Dirichlet problem for the complex Monge-Amp\`ere equation on a general Hermitian manifold with non-empty boundary.  We prove optimal subsolution theorems: for bounded and H\"older continuous quasi-plurisubharmonic functions.  The continuity of the solution is proved for measures that are well dominated by capacity, for example measures with $L^p$, $p>1$ densities, or moderate measures in the sense of Dinh-Nguyen-Sibony. 
	\end{abstract}
	
\bigskip
\bigskip

\section{Introduction}

{\em Background.} 
The complex Monge-Amp\`ere equation in a strictly pseudoconvex bounded domain has been extensively studied  since  1970's. Bedford and Taylor \cite{BT76} proved the fundamental result on the existence of (weak) continuous plurisubharmonic solutions to the Dirichlet problem with continuous datum. The classical solutions were obtained later by Caffarelli, Kohn, Nirenberg and Spruck \cite{CKNS85} for the smooth boundary condition and smooth positive right hand side. The comprehensive book \cite{GZ-book} contains  results, theirs applications and  references for both - the Dirichlet problem and the  Monge-Amp\`ere equation on compact K\"ahler manifolds without boundary.

On the other hand,  it was discovered independently by Semmes \cite{Se92} and Donaldson \cite{Do99} that the geodesic equation in the space of K\"ahler potentials on a compact K\"ahler manifold is equivalent to the homogeneous Monge-Amp\`ere equation on a compact K\"ahler manifold with boundary, the product of the  K\"ahler manifold and an annulus in the complex plane. In general the solution is at most $C^{1,1}$-smooth, due to an example of Gamelin and Sibony \cite{GS80}. Thus it leads to the study of weak solutions to the equation. The (unique) weak  $C^{1,\bar 1}$-solution was obtained by Chen \cite{Ch00} (with a complement in 
\cite{Bl09}) via a sequence of solutions to non degenerate equations - the so called $\veps$-geodesic equations. To solve the non-degenerate Monge-Amp\`ere equation without pseudoconvexity assumption on the boundary one needs to use ideas from Guan's  \cite{Gu98}. We refer to  the expository paper of Boucksom \cite{Bo12} for more detailed  discussion of those  developments.

Guan and Li \cite{GL10} generalized the smooth subsolution theorem in \cite{Gu98} and \cite{Bl09} to the Hermitian setting. This work occurred amid renewed interest in the complex Monge-Amp\`ere equation on Hermitian manifolds which has been studied earlier by Cherrier \cite{Ch87}. It culminated in the resolution of  the Monge-Amp\`ere equation on compact Hermitian manifolds by Tosatti and Weinkove \cite{TW10b}. Weak solutions theory has been developed in \cite{DK12} and \cite{KN1, KN4, KN2, KN7}, \cite{LPT20}, and recent advances for semi-positive Hermitian forms has been made in \cite{GL21a, GL21b}.

Our goal is to study weak solutions to the Dirichlet problem on a smooth compact Hermitian manifold with non-empty boundary. We consider here very general right hand sides, which are positive Radon measures well dominated by capacity  considered in \cite{Ko98, Ko03}. Notice the K\"aher case is included as a special one,  not yet available in the literature.

\medskip

{\em Results.}
Let $(\ov{M}, \om)$ be a $C^{\infty } $ smooth compact Hermitian manifold of dimension $n$ and with non-empty boundary $ \d M$. Thus   $\ov{M}= M \cup \d M$. Let $\mu$ be a positive Radon measure on $M$. Let us denote by $PSH(M, \om)$  the set of all $\om$-plurisubharmonic ($\om$-psh for short) functions on $M$. Consider $\vphi \in C^0(\d M, \bR)$. We  study the Dirichlet problem
\[\label{eq:DP}\begin{cases} 
	u \in PSH(M, \om) \cap L^\infty(\ov{M}),\\	
	(\om + dd^c u)^n = \mu,\\
	\lim_{z \to x} u(z) = \vphi(x) \quad\forall x \in \d M.	
\end{cases}
\]
Since there is no convexity condition on the boundary, to solve the Dirichlet problem a necessary condition is the existence of a subsolution.

\begin{defn}[Subsolution] \label{def:ss}  Let $\ul u \in PSH(M, \om) \cap L^\infty(\ov M)$ be such that $$\lim_{z\to x} \ul u(z) = \vphi(x) \quad \text{for every } x\in \d M.$$ 
\begin{itemize}
\item
[(a)]  It is called a bounded subsolution for the measure $\mu$  if it satisfies:
$$(\om + dd^c \ul u)^n \geq \mu \quad \text{on } M.$$
\item
[(b)] If a bounded subsolution is also continuous (resp. H\"older continuous) on $\ov M$, then we called it as a continuous subsolution (resp. H\"older continuous subsolution) to $\mu$.
\end{itemize}
\end{defn}

\begin{thm} \label{thm:bounded-ss} Suppose that there exists a bounded subsolution $\ul u \in PSH(M, \om) \cap L^\infty(\ov M)$ for $\mu$. 
 Then, the Dirichlet problem \eqref{eq:DP} has a solution.
\end{thm}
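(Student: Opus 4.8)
The plan is to solve \eqref{eq:DP} by Perron's method, anchored to the subsolution. Consider the family $\cB$ of all $v\in PSH(M,\om)\cap L^\infty(\ov M)$ such that $(\om+dd^c v)^n\ge\mu$ on $M$ and $\limsup_{z\to x}v(z)\le\vphi(x)$ for every $x\in\d M$. By hypothesis $\ul u\in\cB$, so $\cB\ne\emptyset$; set $U=\sup_{v\in\cB}v$ and let $u=U^{*}$ be its upper semicontinuous regularization. We aim to show that $u$ is a solution.

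\emph{Admissibility and boundary values.} One first needs an a priori bound $v\le C$ on $\ov M$, uniform over $v\in\cB$, so that $U$ is locally bounded above and $u=U^{*}\in PSH(M,\om)$; one also needs $\limsup_{z\to x}u(z)\le\vphi(x)$ on $\d M$. This is where the bounded subsolution is essential: since no pseudoconvexity of $\d M$ is assumed and $\om$-psh functions on a Hermitian manifold with boundary are not controlled by their boundary values alone, these facts must be obtained by a barrier argument built from $\ul u$ (for instance by comparing each $v\in\cB$ with a suitable upper supersolution $\Psi\in C^{0}(\ov M)$ with $\Psi|_{\d M}=\vphi$, constructed with the help of $\ul u$, so that $v\le\Psi$ by the comparison principle). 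Granting this, $u\le\Psi$ gives $\limsup_{z\to x}u(z)\le\vphi(x)$; and since $\ul u\in\cB$ we have $u\ge U\ge\ul u$ on $M$, so $\liminf_{z\to x}u(z)\ge\lim_{z\to x}\ul u(z)=\vphi(x)$. Hence $\lim_{z\to x}u(z)=\vphi(x)$ for all $x\in\d M$. This step is the main obstacle; everything below is the standard Perron/balayage scheme.

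\emph{The envelope is a subsolution.} By Choquet's lemma pick $v_{j}\in\cB$ with $(\sup_{j}v_{j})^{*}=u$; replacing $v_{j}$ by $\max(\ul u,v_{1},\dots,v_{j})$ keeps us in $\cB$, because the maximum of finitely many $\om$-psh subsolutions is an $\om$-psh subsolution (locally, where the maximum is attained by $v_{i}$ its Monge--Amp\`ere measure equals $(\om+dd^{c}v_{i})^{n}\ge\mu$) and the boundary constraint passes to maxima. So we may assume $v_{j}\uparrow u$ a.e., and by the Bedford--Taylor convergence theorem for increasing sequences of bounded $\om$-psh functions $(\om+dd^{c}v_{j})^{n}\to(\om+dd^{c}u)^{n}$ weakly; hence $(\om+dd^{c}u)^{n}\ge\mu$, that is, $u\in\cB$ and $u=U$.

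\emph{The envelope solves the equation (balayage).} Suppose $(\om+dd^{c}u)^{n}\ne\mu$; then $(\om+dd^{c}u)^{n}(B)>\mu(B)$ for some coordinate ball $B\Subset M$. Using local solvability of the complex Monge--Amp\`ere equation for a Hermitian form on a small ball, let $w$ be the balayage of $u$ onto $B$ with mass $\mu$: $w\in PSH(B,\om)\cap L^{\infty}(B)$, $(\om+dd^{c}w)^{n}=\mu$ on $B$, $w=u$ on $\d B$. By the comparison principle on $B$, from $(\om+dd^{c}w)^{n}=\mu\le(\om+dd^{c}u)^{n}$ and $w=u$ on $\d B$ we get $w\ge u$ on $B$, with $w\not\equiv u$ (otherwise $(\om+dd^{c}u)^{n}=\mu$ on $B$). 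Gluing $\wt U:=u$ on $\ov M\setminus B$ and $\wt U:=w$ on $B$ yields an $\om$-psh function with $(\om+dd^{c}\wt U)^{n}\ge\mu$ on $M$ and unchanged behaviour near $\d M$ (since $B\Subset M$), so $\wt U\in\cB$; but $\wt U\ge u=\sup\cB$ and $\wt U>u$ somewhere, a contradiction. Therefore $(\om+dd^{c}u)^{n}=\mu$, and together with the boundary values this shows $u$ solves \eqref{eq:DP}. In the Hermitian setting both the balayage step and the boundary step require care with the torsion correction terms in the comparison principle and with the local solvability of the Monge--Amp\`ere equation for a non-closed form.
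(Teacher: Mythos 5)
Your high-level scheme -- the Perron envelope over the subsolution class $\cB(\vphi,\mu)$, the a priori bound and boundary comparison, the increasing-sequence argument via Choquet, and the lift/balayage step on coordinate balls -- matches the paper's proof of Theorem~\ref{thm:bounded-ss} quite closely. However, there is a genuine and serious gap: in the balayage step you invoke ``local solvability of the complex Monge--Amp\`ere equation for a Hermitian form on a small ball'' to produce $w\in PSH(B,\om)\cap L^\infty(B)$ with $(\om+dd^c w)^n=\mu$ and $w=u$ on $\d B$. This is not a known fact you can cite; it is precisely the local bounded subsolution theorem (Theorem~\ref{thm:bounded-subsolution} in the paper), and it is the main technical content. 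The measure $\mu$ is only a general positive Radon measure dominated by the Monge--Amp\`ere measure of a bounded psh function, so the local Dirichlet problem on $B$ cannot be treated by classical Bedford--Taylor methods or by smooth theory; it requires the Cegrell-class convergence machinery of Section~\ref{sec:cegrell} (Lemmas~\ref{lem:L1-norm-convergence}--\ref{lem:uniform-L1-convergence}), a careful approximation scheme (solve $(\om+dd^cu_j)^n=hf_j\,dV_{2n}$, extract a weak limit, prove $\om_{u_{j_s}}^n\to\om_u^n$ via Lemma~\ref{cor:weak-convergence}, then pass from continuous densities $h$ to a Borel $h$ by a second limiting argument), and uniform mass estimates derived from the Cegrell inequality. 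You flag ``local solvability'' as requiring care but leave it entirely unaddressed; as written, your proof presupposes the hard part of the theorem.

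Two further details the paper handles that your sketch elides. First, in the lift you take $w=u$ on $\d B$, but $u$ is merely upper semicontinuous on $\d B$; the paper (Lemma~\ref{lem:lift}) takes a decreasing sequence $\phi_j\searrow u|_{\d B}$ of continuous boundary data, solves with each $\phi_j$, and passes to the decreasing limit. Second, one also needs a bounded psh subsolution on the ball $B$ with \emph{zero} boundary values for the restricted measure; the paper arranges this by first solving for ${\bf 1}_{B'}\mu$ with $B'\Subset B$ and then exhausting $B'\nearrow B$. Your boundary barrier step is correct in spirit and corresponds to the paper's use of the continuous solution $h_1$ of the linear equation $(\om+dd^c h_1)\wedge\om^{n-1}=0$ with $h_1|_{\d M}=\vphi$, which dominates every $w\in\cB(\vphi,\mu)$ by the maximum principle for the $\om$-Laplacian. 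Overall: same architecture, but the core existence result on a ball is missing, and that is where essentially all of the new work lies.
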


This result is a generalization of the bounded subsolution theorem in a bounded strictly pseudoconvex domain due to the first author \cite{Ko95}. Furthermore, it can be considered as the weak solution version of \cite{Gu98} and \cite{Bl09}, \cite{Bo12}.
Note however that our  proof   does not use those smooth solutions  as approximants of the weak solution. For  a Hermitian form $\om$   one needs to improve the stability estimates in \cite{Ko95, Ko03} and apply them to get the statement.

Next we turn to the study of the continuity of solutions if we further assume that the right hand side is also well dominated by capacity, as in  the first author's \cite{Ko98}.

Recall  the Bedford-Taylor capacity  defined in our context as follows.  For a Borel subset $E\subset M$,
$$	cap_\om(E) := \sup\left\{\int_E (\om + dd^cw)^n : w\in PSH(M, \om), 0\leq w\leq 1\right\}.
$$
Let $h : \mathbb R_+ \rightarrow (0, \infty ) $ be an increasing function such that
\[
\label{eq:admissible}
	\int_1^\infty \frac{1}{x [h(x) ]^{\frac{1}{n}} }  \, dx < +\infty.
\]
In particular, $\lim_{ x \rightarrow \infty} h(x) = +\infty$. Such a function $h$ is called
{\em admissible}. If $h$ is admissible, then so is $A_2 \, h (A_1x)$ for every  $A_1,A_2 >0$.
Define
\[
	F_h(x) = \frac{x}{h(x^{-\frac{1}{n}})}.
\]
 Let $\mu$ be a positive Radon measure satisfying  for some admissible $h$:
\[
\label{eq:dominate}
	\mu(E) \leq F_h( cap_\om (E)),
\]
for any Borel set $E \subset X$. The set of all measures satisfying this inequality for some admissible $h$ is denoted by $\cF(X,h)$.  In what follows  we often omit to mention that $h$ is admissible.

\begin{cor}\label{cor:cont-ss} Consider the subsolution from  Theorem~
\ref{thm:bounded-ss}. Assume that $\mu \in \cF(M,h)$ for an admissible function $h$. Then, the solution is continuous on $\ov M$.
\end{cor}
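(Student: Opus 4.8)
The plan is to show that the solution $u$ produced by Theorem~\ref{thm:bounded-ss} is the \emph{uniform} limit on $\ov M$ of a sequence of continuous functions; the domination hypothesis $\mu\in\cF(M,h)$ is exactly what upgrades a merely pointwise‑decreasing regularization of $u$ to a uniformly convergent one, and hence forces $u$ to be continuous on $\ov M$.

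First I would produce such a regularization. Covering $\ov M$ by finitely many coordinate charts on each of which $\om=dd^c\rho_\al$ for a smooth strictly psh potential $\rho_\al$, I mollify $u+\rho_\al$ in each chart and glue the local mollifications by the standard regularized‑maximum construction, absorbing the gluing errors into a factor $(1+\veps_j)$; this yields functions $u_j$ continuous on $\ov M$ with $u_j\in PSH(M,(1+\veps_j)\om)$, $\veps_j\downarrow 0$, and $u_j\downarrow u$ pointwise on $M$. Near $\d M$ the mollifications must be corrected so as not to spoil the boundary values: here I would invoke the boundary estimates from the proof of Theorem~\ref{thm:bounded-ss} — in particular the uniform modulus of continuity of $u$ on a closed collar $\ov W$ of $\d M$ coming with the boundary barriers — to arrange that $u_j\to u$ uniformly on $\ov W$. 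The point of this step is that, for every $t>0$, the sublevel set $U_j(t):=\{u<u_j-t\}$ is then relatively compact in $M$ once $j$ is large, so that the pluripotential estimates below run with no boundary contributions.

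The core is a capacity estimate for these sublevel sets, in the spirit of Ko\l odziej \cite{Ko98, Ko03}. By quasicontinuity of $u$ one has $\eta_j(\de):=cap_\om(\{u_j-u>\de\})\to 0$ as $j\to\infty$, for each fixed $\de>0$. Comparing on $U_j(t)$ the Monge--Amp\`ere measure of a competitor $0\le w\le 1$ with that of $u$, through the intermediate function $u_j-t+sw$, the Bedford--Taylor comparison principle (licit since $U_j(t)\Subset M$) together with super‑additivity of the Monge--Amp\`ere operator yields, for $0<s<t$,
\[
	s^n\,cap_\om\bigl(U_j(t)\bigr)\ \le\ \mu\bigl(U_j(t-s)\bigr)+o_j(1),
\]
the error absorbing both the loss from $u_j$ being only $(1+\veps_j)\om$‑psh and the lower‑order terms carrying $\|d\om\|,\|dd^c\om\|$ that appear because $\om$ is only Hermitian. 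Feeding in $\mu\in\cF(M,h)$, i.e. $\mu(U_j(t-s))\le F_h(cap_\om(U_j(t-s)))$, the decreasing function $t\mapsto cap_\om(U_j(t))$ satisfies the functional inequality underlying Ko\l odziej's iteration, and the convergence of $\int_1^\infty dx/(x[h(x)]^{1/n})$ forces $U_j(\de+\tau)=\emptyset$ for every $\tau\ge\tau(\eta_j(\de))$, with $\tau(\eta)\to 0$ as $\eta\to0$. Hence $\sup_{\ov M}(u_j-u)\le\de+\tau(\eta_j(\de))$; letting $j\to\infty$ and then $\de\to0$ gives $\sup_{\ov M}(u_j-u)\to0$, so the continuous $u_j$ converge uniformly to $u$ and $u$ is continuous on $\ov M$.

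I expect the main obstacle to be the displayed estimate, on two fronts. First, there is genuine bookkeeping near the boundary: the capacity and comparison arguments are clean only on relatively compact sets, so the correction of $u_j$ in the collar $\ov W$ — keeping the sublevel sets $U_j(t)$ away from $\d M$ — is indispensable and must be carried out using only the \emph{bounded} subsolution we are given. Second, and this is the technical heart, because $\om$ need not be closed the classical integration‑by‑parts identities acquire uniformly small but nonzero error terms, and controlling these with constants independent of $j$ — so that Ko\l odziej's iteration still terminates — is precisely the improvement over \cite{Ko95, Ko03} announced after Theorem~\ref{thm:bounded-ss}.
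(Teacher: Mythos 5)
Your proposal is correct in substance and the core pluripotential‑theoretic engine is the same as in the paper, but the organization is genuinely different, and the paper's route is a bit more economical. The paper argues by contradiction: set $d=\sup_{\ov M}(u-u_*)>0$, note the set where the jump persists is relatively compact in $M$ (by the two‑sided boundary sandwich $\ul u\le u\le h_1$), choose interior sets $M'\subset\subset M''\subset\subset M$ so that $|u-\vphi|<d/4$ on $M\setminus M'$, take a decreasing sequence of smooth $\om$‑psh $u_j\searrow u$ only on $M''$ via Błocki--Kołodziej, and observe by the Hartogs lemma that $\{u<u_j-d/4\}$ is nonempty and compactly contained in $M'$ for $j$ large. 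The contradiction then comes from applying the packaged capacity estimate (Theorem~\ref{thm:kappa}: $t\le\kappa[cap_\om(U(\te,t))]$) on $M'$, since $cap_\om(u<u_j)\to0$. The key virtue of this arrangement is that the regularization is done \emph{only in the interior} — one never has to mollify up to $\d M$ or correct boundary values at all; the boundary is handled once and for all by the barrier sandwich plus Hartogs.

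Your plan instead builds a global regularization $u_j\downarrow u$ that is continuous on all of $\ov M$ and carefully corrected on a collar of $\d M$, and then proves $\sup_{\ov M}(u_j-u)\to0$ directly via a Kołodziej‑type functional inequality for $cap_\om(U_j(t))$. This is workable: the boundary sandwich $\ul u\le u\le h_1$ (with $\lim_{z\to x}\ul u(z)=\vphi(x)$ and $h_1$ continuous) does indeed give a uniform modulus of continuity of $u$ in a collar, and hence you can push the sublevel sets $U_j(t)$ into $M\setminus\ov W$ for $j$ large, after which your displayed estimate is precisely the un‑iterated form of Theorem~\ref{thm:kappa}. The trade‑off is that you now owe a construction of a global decreasing regularization that is continuous up to and including $\d M$ and is uniformly close to $u$ in the collar — this is real extra bookkeeping that the paper's proof simply avoids by keeping the approximants on $M''$. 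Conversely, your version has the mild advantage of producing a concrete uniformly convergent sequence rather than a pure existence‑by‑contradiction statement. In both arguments the genuine difficulty, which you correctly flag but do not carry out, is absorbing the $d\om,dd^c\om$ error terms with constants independent of $j$; in the paper this is exactly what Theorem~\ref{thm:kappa} (the Hermitian comparison estimate) encapsulates, so you should cite it rather than rederive it.
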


An important class of such measures are the ones with $L^p$-density with respect to the Lebesgue measure  ($p> 1$)  (Lemma~\ref{lem:sublevel-set}). Another class is  the Monge-Amp\`ere measures of H\"older continuous quasi-plurisubharmonic functions on $\ov M$ (Theorem~\ref{thm:moderate-boundary}). 
It still remains an open problem, even in a strictly pseudoconvex set in $\bC^n$, whether a continuous subsolution leads to the continuous solution.

The best regularity of solutions of the Dirichlet problem in our considerations is the H\"older one (see e.g. \cite{GKZ08}). 
In the compact K\"ahler manifold case it was proved in \cite{Ko08} for $L^p ,\  (p>1)$ right hand side. Then  the H\"older continuous subsolution theorem was proved  in \cite{DDGHKZ}. We prove here a significant generalization of \cite{Ng17, Ng18}, which answered positively a question by Zeriahi in the local setting. That is if the subsolution is  H\"older continuous, then the solution has this property too.

\begin{thm}\label{thm:holder-ss} Assume that the subsolution $\ul u \in PSH(M,\om) \cap C^{0,\al}(\ov M)$ for some $\al>0$. Then, the solution $u$ obtained in Theorem~\ref{thm:bounded-ss} is H\"older continuous on $\ov M$.
\end{thm}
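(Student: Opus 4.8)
The plan is to follow the now-standard scheme for proving Hölder continuity of weak solutions to the complex Monge-Ampère equation, adapted to the Hermitian manifold with boundary, building on the bounded subsolution theorem (Theorem~\ref{thm:bounded-ss}) and the stability estimates alluded to there. First I would record that since $\ul u \in C^{0,\al}(\ov M)$, its Monge-Ampère measure $\mu = (\om + dd^c \ul u)^n$ lies in $\cF(M,h)$ for an admissible $h$ — this is the content of Theorem~\ref{thm:moderate-boundary} mentioned in the excerpt — so Corollary~\ref{cor:cont-ss} already gives continuity of the solution $u$; the task is to upgrade this to a quantitative Hölder estimate. The key mechanism is to compare $u$ with its shifted/regularized versions. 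For a small parameter $\de>0$ and a point $x \in M$ with $\mathrm{dist}(x,\d M) > \de$, one considers the sup-convolution-type regularization $u_\de(x) = \sup_{|y-x|\le \de} u(y)$ (in local coordinates, or via a partition-of-unity construction intrinsic to $\ov M$), and more importantly the "shift" $\hat u_\de$ obtained by averaging $u$ against a mollifying kernel on the $\de$-ball. One shows $\hat u_\de$ is $C\de^2$-quasi-plurisubharmonic, i.e. $\om + dd^c \hat u_\de \ge -C\de^2 \om$ by the Hermitian torsion estimate, and that $\om + dd^c(\hat u_\de + C\de^2 \rho) \ge 0$ after adding a fixed strictly psh local potential correction.

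The heart of the argument is a \emph{volume–capacity estimate}: one must bound $\sup_M (\hat u_\de - u)$ (equivalently $\sup_M(u_\de - u)$) by a power of $\de$. The standard route, going back to \cite{Ko08, DDGHKZ} in the Kähler case and \cite{Ng17, Ng18} locally, is to write, using the stability estimate for the Hermitian Monge-Ampère operator that underlies Theorem~\ref{thm:bounded-ss},
\[
\sup_M (\hat u_\de - u) \le C \left( \int_M (\hat u_\de - u)_+ \, (\om + dd^c u)^n \right)^{\al'} + (\text{boundary term}),
\]
or an analogous capacity-based inequality, and then to control the right-hand integral. Here the boundary term is handled by the Hölder continuity of $\ul u$: near $\d M$ the solution is squeezed between $\ul u$ and an $\om$-psh majorant with the same boundary values, both Hölder, so $\hat u_\de - u = O(\de^\al)$ in a $\de$-collar of $\d M$. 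In the interior, one uses that $\mu \in \cF(M,h)$: the measure of the sublevel set $\{\hat u_\de - u > t\}$ is controlled via $F_h$ of its capacity, and the capacity itself is estimated using the comparison principle against $(\hat u_\de - u)$ as a competitor, yielding $\int (\hat u_\de - u)_+ \, d\mu \lesssim \de^{2\be}$ for some $\be>0$ depending on $\al$ and on $h$ (for the $L^p$ case, $\be$ is explicit). Combining, $\sup_M(u_\de - u) \le C\de^{\ga}$ for some $\ga>0$.

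Finally I would convert the one-sided estimate into genuine Hölder continuity: from $u(y) \le u_\de(x) \le u(x) + C\de^\ga$ for all $|y-x|\le\de$ one gets $u(y) - u(x) \le C\de^\ga$, and symmetrically, using that $u$ is $\om$-psh hence its upward mollification/shift dominates it by an amount controlled the same way (or by a classical argument bounding $\int(u - \hat u_\de)\,dV$ for quasi-psh functions by $C\de^2$ via the Laplacian), one obtains the matching lower bound, so $|u(y)-u(x)| \le C|x-y|^\ga$. The main obstacle I anticipate is twofold: (i) making the regularization and the torsion correction work \emph{globally on $\ov M$ up to the boundary}, since sup-convolutions do not respect the boundary values and one must interpolate between the interior estimate and the boundary-collar estimate (this is exactly where the Hölder hypothesis on $\ul u$, rather than mere continuity, is essential); and (ii) obtaining a stability estimate for the Hermitian operator with an \emph{effective exponent} — the non-Kähler torsion terms in $(\om + dd^c u)^n$ introduce lower-order errors that must be absorbed, and keeping track of them so that the final exponent $\ga$ is positive (not just the qualitative continuity from Corollary~\ref{cor:cont-ss}) is the technically delicate point, handled by the improved stability estimates referenced after Theorem~\ref{thm:bounded-ss}.
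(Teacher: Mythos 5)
Your overall scheme is recognizably the one in the paper: regularize the solution, control the boundary collar using the H\"older continuity of $\ul u$ and a harmonic majorant, apply a stability estimate to bound $\sup(\hat u_\de - u)$ by an $L^1(d\mu)$-norm, and convert that norm into a power of $\de$. You also correctly identify the two obstacles (globalizing the regularization up to the boundary and getting an effective exponent through the Hermitian torsion). However, the central technical claim on which your argument rests is false as stated, and a couple of the conversion steps are left at a level of vagueness that hides where the real work lies.

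The claim that $\om + dd^c \hat u_\de \ge -C\de^2\om$ for the local mollification ``by the Hermitian torsion estimate'' does not hold. The $O(\de^2)$ bound you have in mind requires $\om = dd^c g$ for a local potential $g$, i.e.\ $d\om=0$, and even in the K\"ahler case the local convolutions in different charts do not patch: there is no partition-of-unity trick that preserves quasi-plurisubharmonicity. The correct intrinsic construction is Demailly's regularization $\rho_\de u$ via the exponential flow of the Chern connection (used in \eqref{eq:phie} of the paper), and for a bounded $\om$-psh $u$ this gives no $O(\de^2)$ Hessian bound on its own. The paper instead passes through the Kiselman--Legendre transform $u_{\de,b}$ (Lemma~\ref{lem:kis}), which yields $\om + dd^c u_{\de,b} \geq -(Ab + 2K\de)\om$, and to actually regularize one must take $b>0$; the choice in Lemma~\ref{lem:U-delta} is $b = O(\de^\tau)$ with $\tau$ the eventual H\"older exponent, so the loss is $-O(\de^\tau)\om$ with $\tau<1$, \emph{much worse} than $-O(\de^2)\om$. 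Because of this, your proposed ``$+C\de^2\rho$'' correction is too small to restore plurisubharmonicity; the paper instead multiplies by $(1-\de^\tau)$ to produce the genuine $\om$-psh competitor $U_\de=(1-\de^\tau)u_{\de,b}$. This is not a cosmetic difference: with only a $-O(\de^\tau)\om$ lower bound one has to track the exponent $\tau$ self-consistently through the stability estimate, and your argument does not address this.

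Two further gaps. First, the inequality $\int (\hat u_\de - u)_+\,d\mu \lesssim \de^{2\be}$ is not a direct consequence of $\mu \in \cF(M,h)$ and the comparison principle. The paper must first pass from $L^1(d\mu)$ to $L^1(dV_{2n})$ (Lemma~\ref{lem:holder-L1}), which in turn rests on local stability estimates for the Monge--Amp\`ere measure of a H\"older psh function on both interior and boundary charts (Proposition~\ref{prop:boundary-L1}, Lemma~\ref{lem:interior-L1}) proved via B\l ocki-type inequalities and careful $\veps$-dependent mass bounds near $\d\Om$ (Corollary~\ref{cor:mass-decay}), and only then uses the Laplacian bound (Lemma~\ref{lem:boundary-L1}) to get a factor of $\de$. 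Second, your ``boundary term'' hides the two-parameter bookkeeping: the paper's stability estimate for competitors equal to $u$ outside $M_\veps$ (Proposition~\ref{prop:stability-ext}) costs a factor $\veps^{-(3n+1)}$, Lemma~\ref{lem:holder-L1} costs another $\veps^{-n}$, and the boundary-collar estimate (Lemma~\ref{lem:boundary-value}) contributes $c_1\veps^{\tau_1}$; one must then balance $\de$ against $\veps$ by choosing $\veps = \de^{\al_4}$ (Proposition~\ref{prop:delta} and the end of Section~\ref{sec:holder-ss}). Without this explicit interpolation the final exponent cannot be shown to be positive, which is precisely the point of the proof.
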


Notice that the H\"older exponent of the solution depends only on the dimension and the H\"older exponent of the subsolution, as in  \cite{DDGHKZ} and \cite{KN4} for compact complex manifolds without boundary.

The uniqueness of a weak solution is still an open problem  in general on compact Hermitian manifolds without boundary (see \cite{KN2} for a partial result). We are able to prove this property under  some  extra assumption on either the metric $\om$ or the manifold.

\begin{cor}[Uniqueness of bounded solution] \label{cor:uniqueness}
Suppose that $M$ is Stein or $\om$ is K\"ahler. Let $u, v$ be bounded $\om$-psh on $M$ such that $\lim\inf_{z \to \d M} (u-v)(z) \geq 0$. Assume that $(\om + dd^c u)^n \leq (\om +dd^c v)^n$ in $M$. Then,
$ u \geq v$ on $\ov{M}$. In particular, there is at most one bounded solution to the Dirichlet problem~\eqref{eq:DP} in this setting.
\end{cor}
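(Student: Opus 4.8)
The crux is a \emph{domination principle}: if $u,v\in PSH(M,\om)\cap L^\infty(M)$ satisfy $\liminf_{z\to\d M}(u-v)(z)\ge 0$ and $(\om+dd^cu)^n\le(\om+dd^cv)^n$ on $M$, then $u\ge v$ on $\ov M$. Granting this, the corollary is immediate: the first assertion \emph{is} the domination principle, and if $u,v$ are two bounded solutions of \eqref{eq:DP} then $\lim_{z\to x}(u-v)(z)=0$ on $\d M$ and $(\om+dd^cu)^n=\mu=(\om+dd^cv)^n$, so applying the domination principle with $u$ and $v$ interchanged yields $u=v$. So the plan is to prove the domination principle, and the single ingredient it rests on is the \emph{comparison inequality}
\[
\label{eq:cp-plan}
\int_{\{u<v\}}(\om+dd^cv)^n\ \le\ \int_{\{u<v\}}(\om+dd^cu)^n .
\]
This is exactly where the hypothesis enters: \eqref{eq:cp-plan} is false for a general Hermitian $\om$ (the obstruction being that $\int_M(\om+dd^c\vphi)^n$ depends on $\vphi$ even modulo a compactly supported change), but it holds when $\om$ is K\"ahler and when $M$ is Stein. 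For K\"ahler $\om$ one runs the classical argument: for $\veps>0$ the set $\{u<v-\veps\}$ is relatively compact in $M$ (since $\liminf_{z\to\d M}(u-v)\ge0>-\veps$), so $g_\veps:=\max(u,v-\veps)-u\ge0$ has compact support; as $\sum_{k=0}^{n-1}(\om+dd^c\max(u,v-\veps))^{k}\wedge(\om+dd^cu)^{n-1-k}$ is a \emph{closed} positive current ($d\om=0$), Stokes gives $\int_M(\om+dd^c\max(u,v-\veps))^n=\int_M(\om+dd^cu)^n$ over a relatively compact neighbourhood of $\mathrm{supp}\,g_\veps$; combining this with the standard inequality $(\om+dd^c\max(u,v-\veps))^n\ge\mathbf 1_{\{u\ge v-\veps\}}(\om+dd^cu)^n+\mathbf 1_{\{u<v-\veps\}}(\om+dd^cv)^n$ and letting $\veps\downarrow0$ gives \eqref{eq:cp-plan}. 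For Stein $M$ one argues through an exhaustion of $M$ by relatively compact strictly pseudoconvex open subsets, using the strictly plurisubharmonic exhaustion function of $M$ to compensate for the missing global closedness of $\om$ and the boundary hypothesis to keep the relevant sublevel sets compactly inside $M$.

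\smallskip

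\emph{Domination principle when $M$ is Stein.} Fix $\rho\in C^\infty(\ov M)$ strictly plurisubharmonic, normalised so that $0\le\rho\le1$ and $(dd^c\rho)^n\ge\veps_0\,\om^n$ for some $\veps_0>0$ on the compact $\ov M$. For $\de>0$ put $v_\de:=v+\de(\rho-1)\in PSH(M,\om)\cap L^\infty$; then $v_\de\le v$, $v_\de\to v$ uniformly, $\liminf_{z\to\d M}(u-v_\de)\ge\liminf_{z\to\d M}(u-v)\ge0$, and, by positivity of all mixed terms,
\[
(\om+dd^cv_\de)^n=\big((\om+dd^cv)+\de\,dd^c\rho\big)^n\ \ge\ (\om+dd^cv)^n+\de^n(dd^c\rho)^n\ \ge\ (\om+dd^cu)^n+\de^n\veps_0\,\om^n .
\]
For each $t>0$ set $E:=\{u<v_\de-t\}$; it is relatively compact in $M$, so all integrals below are finite, and applying \eqref{eq:cp-plan} to the pair $(u+t,v_\de)$ together with the displayed inequality gives
\[
\int_{E}(\om+dd^cu)^n+\de^n\veps_0\int_{E}\om^n\ \le\ \int_{E}(\om+dd^cv_\de)^n\ \le\ \int_{E}(\om+dd^cu)^n ,
\]
whence $\int_E\om^n=0$. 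Thus $\{u<v_\de-t\}$ is Lebesgue--negligible for every $t>0$, hence so is $\{u<v_\de\}=\bigcup_{t>0}\{u<v_\de-t\}$; since the value of an $\om$-psh function at a point is the limit of its averages over shrinking coordinate balls, $u\ge v_\de$ a.e. forces $u\ge v_\de$ everywhere on $M$. Letting $\de\downarrow0$ gives $u\ge v$ on $\ov M$.

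\smallskip

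\emph{Domination principle when $\om$ is K\"ahler.} Here no global strictly psh function exists, so one argues without perturbing $v$. Fix $\veps>0$; \eqref{eq:cp-plan} applied to $(u+\veps,v)$ combined with $(\om+dd^cu)^n\le(\om+dd^cv)^n$ forces $(\om+dd^cu)^n=(\om+dd^cv)^n$ on $\{u<v-\veps\}$. With $w_\veps:=\max(u,v-\veps)$ and $g_\veps:=w_\veps-u\ge0$ (compactly supported, as above) this gives $(\om+dd^cw_\veps)^n\ge(\om+dd^cu)^n$ on $M$, with equal total mass near $\mathrm{supp}\,g_\veps$ by Stokes, hence $(\om+dd^cw_\veps)^n=(\om+dd^cu)^n$ on all of $M$. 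The uniqueness theorem for the Monge--Amp\`ere operator under a compactly supported perturbation --- on a K\"ahler manifold a consequence of the energy identity
\[
\int_M dg_\veps\wedge d^cg_\veps\wedge\Big(\textstyle\sum_{k=0}^{n-1}(\om+dd^cw_\veps)^{k}\wedge(\om+dd^cu)^{n-1-k}\Big)=0
\]
(the vanishing using $d\om=0$ and the compact support of $g_\veps$), together with the equality case of the Brunn--Minkowski inequality for the Monge--Amp\`ere operator --- gives $g_\veps\equiv0$, i.e.\ $u\ge v-\veps$; letting $\veps\downarrow0$ finishes the proof. I expect the comparison inequality \eqref{eq:cp-plan} (and its companion energy identity) to be the main obstacle: the whole point of restricting to $M$ Stein or $\om$ K\"ahler is to recover the Stokes/integration-by-parts identities that fail for arbitrary Hermitian metrics, and, as always, every integration by parts above must be justified for merely bounded $\om$-psh functions by Bedford--Taylor theory.
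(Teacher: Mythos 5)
Your overall strategy — reduce to a domination principle, then treat the K\"ahler and Stein cases separately, using the max construction and Stokes in the K\"ahler case and a strictly plurisubharmonic exhaustion to create extra positivity in the Stein case — parallels the paper's proof at the level of structure. However, there are two genuine gaps, one of which is fatal as written.

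\textbf{The Stein case.} Your argument hinges on the assertion that the exact comparison inequality $\int_{\{u<v\}}\om_v^n\le\int_{\{u<v\}}\om_u^n$ holds ``when $M$ is Stein''. This is false. The obstruction you correctly identified — $d\om\neq0$, so wedge products such as $\sum_k\om_v^k\wedge\om_u^{n-1-k}$ are not closed currents — does not disappear on a Stein manifold: Steinness supplies a global strictly plurisubharmonic function $\rho$, but it does not make the given Hermitian form $\om$ closed, and integration by parts still produces error terms in $d\om,\,d^c\om,\,dd^c\om$. The correct tool here is an \emph{approximate} comparison principle with a multiplicative error of the form $\left(1+\tfrac{C_n\,\rB\, s}{\te^n}\right)$ (the paper's Lemma~\ref{lem:comparison-stein}, a Stein variant of Lemma~\ref{lem:mCP}). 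Your perturbation $v_\de=v+\de(\rho-1)$ is the right mechanism — the extra positivity $\om_{v_\de}\ge\de\,dd^c\rho\gtrsim\de\,\om$ is exactly what the hypothesis $\om+dd^cv\ge\te\om$ of Lemma~\ref{lem:comparison-stein} asks for — but the conclusion $\int_E\om^n=0$ does not follow from the displayed chain of inequalities once the multiplicative $(1+Cs)$ error is present; absorbing it requires the more delicate limiting argument of \cite[Corollary~3.4]{KN1}, which the paper invokes. As written, your chain uses an identity that fails for non-K\"ahler $\om$, and the proof does not go through.

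\textbf{The K\"ahler case.} Here (cp-plan) is indeed valid because $d\om=0$, so the reduction to $\om_{w_\veps}^n=\om_u^n$ on $M$ (with $w_\veps\ge u$ and $w_\veps=u$ near $\d M$) is sound. But the final step is incomplete: the energy identity $\int_Mdg_\veps\wedge d^cg_\veps\wedge T=0$, with $T=\sum_k\om_{w_\veps}^k\wedge\om_u^{n-1-k}$, does \emph{not} by itself force $g_\veps\equiv0$, because $T$ is merely a nonnegative $(n-1,n-1)$-current with no a priori lower bound $T\gtrsim\om^{n-1}$ — it can degenerate wherever $\om_u$ and $\om_{w_\veps}$ do. The appeal to ``the equality case of the Brunn--Minkowski inequality'' is a hand-wave; making this rigorous is precisely what B\l ocki's uniqueness theorem (\cite[Theorem~2.3]{Bl09}) accomplishes, and that is the result the paper cites. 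If you want an explicit argument you must emulate B\l ocki's proof (e.g.\ by perturbing with a local strictly psh function and inducting on the degree), not merely write the energy identity.

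In summary: your plan is the right plan and closely mirrors the paper's, but you need (i) to replace the exact comparison inequality in the Stein case by the modified comparison principle with multiplicative error and the accompanying limiting argument, and (ii) to either cite B\l ocki's theorem or supply a complete proof in the K\"ahler case, because the energy identity alone does not close the argument.
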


\medskip

{\em Organization.} In  Section~\ref{sec:cegrell}  we prove various convergence theorems in the Cegrell class of plurisubharmonic functions. These results combined with the Perron method allow to derive the bounded subsolution theorem (Theorem~\ref{thm:bounded-ss}) in Section~3. 
  In Section~\ref{sec:continuous-ss} we prove the stability estimates for Hermitian manifolds with boundary for measures that are well-dominated by capacity. This is done by adopting the proofs of  stability estimates from the setting of  compact Hermitian manifold without boundary.  Section~\ref{sec:boundary-chart} contains local estimates on interior  and boundary charts. The key technical  result is  the bound on volumes  of sublevel sets of quasi-plurisubharmonic functions in a certain Cegrell class in a boundary chart (Lemma~\ref{lem:sublevel-set}).  In Section~\ref{sec:holder-ss} we prove the H\"older continuous subsolution theorem. One needs to consider a smoothing  of the bounded solution obtained in Theorem~\ref{thm:bounded-ss}, via the geodesic convolution, due to Demailly. Then we obtain the global stability estimate to control the modulus of continuity of the solution. For this we use a rather delicate construction choosing carefully two exhaustive sequences $M_\veps \subsetneq M_\de$ of the manifold $M$, and keeping track of the dependence of the modulus of continuity of the solution in collar sets on both parameters $\veps>\de>0$ (Proposition~\ref{prop:delta}). Finally, we give the proof of  the uniqueness of solution when  either the manifold is  Stein or $\om$ is K\"ahler.

\medskip

{\em Notation.}  The uniform constants $C, C_0, C_1,...$ may differ from line to line. For simplicity we denote $\|\cdot\|_\infty$ to be the supremum norm of functions in the considered domain. We often write $\om_v:=\om + dd^c v$ for a quasi-plurisubharmonic function $v$.

\medskip

{\em Acknowledgements.} The first author is partially supported by  grant  no. \linebreak 2021/41/B/ST1/01632 from the National Science Center, Poland. The second author is  partially supported by the start-up grant G04190056 of KAIST and the National Research Foundation of Korea (NRF) grant  no. 2021R1F1A1048185. We also thank the referees for a very careful reading and constructive comments. One of referee's suggestions shortened our original proof.

\section{Convergence in the Cegrell class}
\label{sec:cegrell}

Let $\Om$ be a bounded strictly pseudoconvex domain in $\bC^n$. Let us recall the Cegrell class introduced in \cite{Ce98}:
$$\cE_0(\Om) =\left\{  u\in PSH (\Om) \cap L^\infty(\Om):  \lim_{z\to \d\Om} u(z) =0, \; \int_\Om (dd^c u)^n <+\infty \right\}.$$

First we prove a bunch of convergence results to be used in the following sections. In what follows, when the domain of integration $\Om$ is fixed and no confusion arises we often write
$$
	\int  g d\la := \int_\Om g d\la
$$
for a Borel function $g$ on $\Om$. For a Borel set $E\subset \Om$, we denote by  $cap(E) := cap(E,\Om)$ its Bedford-Taylor capacity.

The following is implicitly contained in the last part of the proof of \cite[Lemma~5.2]{Ce98}.  Cegrell dealt with sequences from $\cE_0(\Om)$ but the proof works for sequences of  negative plurisubharmonic functions as well. 
\begin{lem} \label{lem:L1-norm-convergence}
Let $d\la$ be a finite positive Radon measure on $\Om$ which vanishes on pluripolar set. Suppose that $u_j \in \cE_0(\Om)$ is a  uniformly bounded sequence that  converges a.e. with respect to the Lebesgue measure $dV_{2n}$ to  $u \in \cE_0(\Om)$. Then there exists a subsequence  $\{u_{j_s}\} \subset \{u_j\}$ such that
$$  \lim_{s\to +\infty} \int_\Om u_{j_s} d\la = \int_\Om u d\la.$$
\end{lem}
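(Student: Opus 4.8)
The plan is to prove the two halves of the displayed equality separately. The upper bound $\limsup_j\int_\Om u_j\,d\la\le\int_\Om u\,d\la$ will hold for the whole sequence and is soft; the matching lower bound, for which a subsequence is genuinely needed, will be deduced from the fact that the hypotheses force $u_j\to u$ in capacity. It is the proof of this last fact — specifically its ``from below'' half — that I expect to be the real obstacle, since it is there that plurisubharmonicity of the $u_j$ is used essentially.

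\emph{Upper bound.} Set $\phi_j:=(\sup_{k\ge j}u_k)^*$. These are uniformly bounded psh functions, decreasing in $j$, with $\phi_j\ge u_k$ for every $k\ge j$. Their pointwise limit $\phi_\infty$ is psh and equals $u$ a.e.\ with respect to $dV_{2n}$: indeed $\sup_{k\ge j}u_k\downarrow\limsup_k u_k=\lim_k u_k=u$ a.e., while $\phi_j$ agrees with $\sup_{k\ge j}u_k$ off a pluripolar — hence Lebesgue-null — set. As two psh functions equal a.e.\ coincide everywhere, $\phi_j\downarrow u$ on all of $\Om$, so dominated convergence ($\la$ finite, $\phi_j$ uniformly bounded) gives $\int_\Om\phi_j\,d\la\to\int_\Om u\,d\la$; since $\int_\Om\phi_j\,d\la\ge\sup_{k\ge j}\int_\Om u_k\,d\la$ the upper bound follows.

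\emph{Convergence in capacity.} Next I would prove $cap(\{|u_j-u|>\de\})\to0$ for each $\de>0$. The estimate $cap(\{u_j>u+\de\})\to0$ is easy, since $u_j\le\phi_j$ forces $\{u_j>u+\de\}\subset\{\phi_j-u>\de\}$ and a decreasing sequence of uniformly bounded psh functions converges in capacity to its limit. The complementary estimate $cap(\{u_j<u-\de\})\to0$ is the essential point, and is precisely what is drawn from the last part of the proof of \cite[Lemma~5.2]{Ce98}: one uses the comparison principle together with Chern--Levine--Nirenberg type bounds for the Monge--Amp\`ere mass and capacity of sublevel sets of the $u_j$ — this is where plurisubharmonicity matters, the assertion being false for general uniformly bounded Borel sequences converging a.e. A natural preliminary reduction, since $u$ is only upper semicontinuous, is to approximate $u$ from below by continuous psh functions $u^{(m)}\in\cE_0(\Om)$ with $u^{(m)}\uparrow u$ (possible as $\Om$ is strictly pseudoconvex): the excess sets $\{u-u^{(m)}\ge\de/2\}$ are then compact, decrease to $\emptyset$, and hence have capacities tending to $0$, which reduces the problem to estimating $cap(\{u_j<u^{(m)}-\de/2\})$ against a fixed continuous psh target. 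I expect this lower capacity bound to be the main obstacle; everything after it is routine.

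\emph{Conclusion.} Granted $u_j\to u$ in capacity, the hypothesis on $\la$ finishes the proof. Choose indices $j_1<j_2<\cdots$ with $cap(\{|u_{j_s}-u|>1/s\})<2^{-s}$ and put $F_m:=\bigcup_{s\ge m}\{|u_{j_s}-u|>1/s\}$. By countable subadditivity of capacity $cap(F_m)<2^{1-m}$, so $\bigcap_m F_m$ has zero capacity, is therefore pluripolar, and hence $\la$-null; since $(F_m)$ decreases and $\la$ is finite, $\la(F_m)\to0$. On $\Om\setminus F_m$ one has $|u_{j_s}-u|\le1/s$ for all $s\ge m$, so with $C:=\sup_j\|u_j\|_\infty+\|u\|_\infty<\infty$ one gets $\int_\Om|u_{j_s}-u|\,d\la\le\la(\Om)/s+C\,\la(F_m)$ whenever $s\ge m$; letting $s\to\infty$ and then $m\to\infty$ yields $\int_\Om u_{j_s}\,d\la\to\int_\Om u\,d\la$. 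Passing to a subsequence is genuinely unavoidable: a finite measure vanishing on pluripolar sets need not be dominated by capacity (it need not lie in any $\cF(M,h)$), so the sets $F_m$ may carry a non-negligible amount of $\la$-mass at each finite stage, even though that mass tends to $0$ along the extracted subsequence.
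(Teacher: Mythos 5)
Your \emph{upper-bound} argument is correct and quite clean: $\phi_j=(\sup_{k\ge j}u_k)^*$ are psh, decrease, coincide with $\sup_{k\ge j}u_k$ off a pluripolar (hence Lebesgue-null) set, so their limit is psh and equals $u$ a.e., hence everywhere; monotone convergence then gives $\limsup_j\int u_j\,d\la\le\int u\,d\la$ for the \emph{entire} sequence, with no need for the non-charging hypothesis. This is a genuinely simpler route to the ``$\le$'' half than the paper's.

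The lower-bound half, however, has a gap that cannot be repaired within your framework, because the assertion $cap(\{u_j<u-\de\})\to 0$ is simply \emph{false} under the hypotheses of the lemma --- not even along a subsequence. Take $\Om$ to be the unit disc in $\bC$, let $a_j\in\Om$ with $a_j\to\d\Om$, let $g_{a_j}$ be the Green function of $\Om$ with pole at $a_j$, and set $u_j=\max\{g_{a_j},-1\}$. Then $u_j\in\cE_0(\Om)$, $-1\le u_j\le 0$, and $u_j(z)\to 0$ for \emph{every} $z\in\Om$ (in particular a.e.\ Lebesgue), so $u\equiv 0\in\cE_0(\Om)$. But $\{u_j<-1/2\}=\{g_{a_j}<-1/2\}$ is a M\"obius image of $\{|z|<e^{-1/2}\}$, and the Bedford--Taylor capacity is invariant under biholomorphisms, so $cap(\{u_j<u-1/2\},\Om)$ is a fixed positive constant independent of $j$. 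No subsequence converges in capacity, so both your preliminary reduction (to a continuous comparison function $u^{(m)}$) and your final extraction of $j_s$ with $cap(\{|u_{j_s}-u|>1/s\})<2^{-s}$ break down at the first step. Note also that your appeal to \cite[Lemma~5.2]{Ce98} is misremembered: what is ``implicitly contained'' there is not a capacity estimate for sublevel sets, and in fact no Monge--Amp\`ere mass bound on the $u_j$ is even assumed in the lemma, so Chern--Levine--Nirenberg-type inputs are unavailable.

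The paper's proof avoids this entirely by working in $L^2(d\la)$: the uniformly bounded $u_j$ form a bounded family there, one extracts a weakly convergent subsequence with limit $v$, applies Banach--Saks to obtain Ces\`aro means $F_k$ converging strongly (hence, after another extraction, $\la$-a.e.) to $v$, notes that $F_k\to u$ a.e.\ Lebesgue, and then compares $(\sup_{s>t}F_{k_s})^*$ --- which decreases to $u$ \emph{everywhere} --- with $\sup_{s>t}F_{k_s}$, using only now that $\la$ does not charge the pluripolar exceptional set. The Ces\`aro averaging is precisely what replaces the convergence-in-capacity step you were hoping for, and it is the essential idea you are missing.
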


\begin{proof} Since $d\la$ is a finite measure it follows that $\sup_{j} \int_{\Om} |u_j|^2 d\la < +\infty$. So there exists a subsequence $\{u_j\}$ weakly converging to $v \in L^2 (d\la)$. By the Banach-Saks theorem we can find a subsequence $u_{j_k}$ such  that 
$$	
	F_k= \frac{1}{k} (u_{j_1} + \cdots + u_{j_k}) \to v \quad\text{in } L^2(d\la)
$$
as $k \to +\infty$. Extracting a subsequence $\{F_{k_s}\}_s$ of $\{F_k\}$ we  get $F_{k_s} \to v$ a.e in $d\la$, and also  that $F_{k_s}$ converges a.e to $u$ with respect to the Lebesgue measure.  Therefore, $(\sup_{s>t} F_{k_s})^* \searrow u$ everywhere as $t\to +\infty$.
It follows that  there is a subsequence which we still denote by $\{u_j\}$ such that
$$
	\lim_{j\to \infty} \int u_j d\la = \int v d\la = \lim_{s\to \infty} \int F_{k_s} d\la = \lim_{t\to \infty} \int \sup_{s>t} F_{k_s}  d\la = \int u d\la,
$$
where the first identity used the decreasing convergence property; the second one used the a.e-$d\la$ convergence, and  the last used the fact that $d\la$ does not charge  pluripolar sets. This completes the proof.
\end{proof}

\begin{cor}\label{cor:L1-convergence}  We keep  the assumptions of Lemma~\ref{lem:L1-norm-convergence}. Assume moreover that $d\la (E) \leq C_0 cap (E)$ for every Borel set $E \subset \Om$ with a uniform constant $C_0$. Then there exists a subsequence, which is still denoted by $\{u_j\}$, such that
$$ \lim_{j\to \infty} \int |u_j - u| d\la  =0.
$$
\end{cor}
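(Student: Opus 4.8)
The plan is to deduce the corollary from Lemma~\ref{lem:L1-norm-convergence} together with the elementary identity $|a-b| = 2\max(a,b) - a - b$. Since $u_j, u \in \cE_0(\Om)$ are all $\le 0$, integrating this identity pointwise gives
$$
\int_\Om |u_j - u|\, d\la = 2\int_\Om \max(u_j,u)\, d\la - \int_\Om u_j\, d\la - \int_\Om u\, d\la,
$$
where all three integrals are finite because the $u_j$ are uniformly bounded and $d\la$ is finite. Thus it suffices to produce a single subsequence along which \emph{both} $\int_\Om u_j\, d\la \to \int_\Om u\, d\la$ and $\int_\Om \max(u_j,u)\, d\la \to \int_\Om u\, d\la$; the right-hand side then tends to $2\int_\Om u\, d\la - 2\int_\Om u\, d\la = 0$.

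The first convergence is Lemma~\ref{lem:L1-norm-convergence} applied to $\{u_j\}$: the measure $d\la$ is finite and, by the hypothesis $d\la(E)\le C_0\, cap(E)$, charges no pluripolar set, so after passing to a subsequence (still denoted $\{u_j\}$) we get $\int_\Om u_j\, d\la \to \int_\Om u\, d\la$. For the second, observe that $\{\max(u_j,u)\}$ is a uniformly bounded sequence of negative plurisubharmonic functions on $\Om$ converging $dV_{2n}$-a.e.\ to $\max(u,u)=u\in\cE_0(\Om)$; by the remark just before Lemma~\ref{lem:L1-norm-convergence} the lemma also applies to such sequences, so after passing to a further subsequence (still denoted $\{u_j\}$) we obtain $\int_\Om \max(u_j,u)\, d\la \to \int_\Om u\, d\la$. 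Substituting both limits into the displayed identity completes the argument.

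The substance of the statement is really carried by Lemma~\ref{lem:L1-norm-convergence} itself: a.e.\ convergence alone does not give $L^1(d\la)$-convergence for a general $d\la$ not charging pluripolar sets, and it is the Banach--Saks argument together with the monotone regularization $(\sup_{s>t}F_{k_s})^*\searrow u$ used there that bridges the gap. Within the corollary the only points needing (routine) care are the nesting of the two extracted subsequences — each use of the lemma produces a new one, but a subsequence of a sequence converging to $\int_\Om u\, d\la$ still converges to it, so both conclusions hold on the final subsequence — and the check that the auxiliary functions $\max(u_j,u)$ remain admissible. One may note that $d\la(E)\le C_0\, cap(E)$ enters this argument only through the weaker fact that $d\la$ puts no mass on pluripolar sets; if one prefers to use it more substantially, an alternative is to estimate $\la(\{|u_j-u|>t\})$ directly, bounding $\la(\{u_j>u+t\})\le C_0\, cap(\{(\sup_{k\ge j}u_k)^*>u+t\})\to 0$ by capacity-convergence of decreasing sequences of bounded plurisubharmonic functions, and $\la(\{u_j<u-t\})\le t^{-1}(\int_\Om u\, d\la - \int_\Om \min(u_j,u)\, d\la)\to 0$ by Lemma~\ref{lem:L1-norm-convergence} applied to $\{\min(u_j,u)\}$, and then letting $t\to 0^+$.
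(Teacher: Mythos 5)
Your proof is correct, and it takes a genuinely different --- and cleaner --- route than the paper's. Both arguments begin from the same two applications of Lemma~\ref{lem:L1-norm-convergence}, namely $\int u_j\,d\la \to \int u\,d\la$ and $\int \max\{u_j,u\}\,d\la \to \int u\,d\la$ along a common subsequence. The paper then proves convergence in $\la$-measure: for $\{u-u_j>a\}$ it uses a Chebyshev bound on $\max\{u_j,u\}-u_j$, and for $\{u-u_j<-a\}$ it invokes the hypothesis $d\la \le C_0\,cap(\cdot)$ together with convergence in capacity of $\max\{u_j,u\}\to u$ (Hartogs plus quasi-continuity); it then combines convergence in measure with the already-established convergence $\int|u_j|\,d\la\to\int|u|\,d\la$ through a Vitali/Scheff\'e-type argument. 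Your identity $|a-b| = 2\max(a,b)-a-b$ replaces all of that: integrating it gives $\int|u_j-u|\,d\la = 2\int\max\{u_j,u\}\,d\la - \int u_j\,d\la - \int u\,d\la$, and the two limits force the right-hand side to $0$ immediately. This buys both brevity and generality: as you observe, your argument never touches the capacity-domination assumption --- it only needs $d\la$ to be finite and not to charge pluripolar sets, which is already what Lemma~\ref{lem:L1-norm-convergence} requires --- whereas the paper's proof genuinely uses $d\la\le C_0\,cap(\cdot)$ to estimate $\la(\{u-u_j<-a\})$. (That hypothesis is still natural here because the corollary is later applied with $d\la$ a Monge--Amp\`ere measure of a bounded psh function, which does satisfy it.) Your handling of the subsequence nesting and of the admissibility of $\{\max\{u_j,u\}\}$ in Lemma~\ref{lem:L1-norm-convergence} --- via the remark that the lemma applies to uniformly bounded negative psh sequences, not just to those in $\cE_0(\Om)$ --- is exactly right. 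The alternative convergence-in-measure sketch you append at the end is, in essence, the paper's own proof.
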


\begin{proof} By Lemma~\ref{lem:L1-norm-convergence} we know that 
$$\lim_{j\to \infty} \int u_j d\la = \int u d\la, \quad \lim_{j\to \infty} \int \max\{u_j, u\} d\la = \int_{\Om} u d\la.$$
Fix $a >0$. We have $\{|u-u_j|>a\} = \{u- u_j >a\} \cup \{u-u_j<-a\}$. Therefore
$$
	\int_{\{u-u_j>a\}} d\la = \int_{\{\max\{u_j,u\} - u_j >a\}} d\la \leq \frac{1}{a}  \int_{\Om} \left( \max\{u_j,u\} - u_j \right) d\la \to 0 .
$$
Next note that $\max\{u, u_j\} \to u$ in capacity, by the Hartogs lemma and the quasi-continuity of $u$. It follows that
after using the last assumption
$$
	\int_{\{ u- u_j < -a\}} d\la \leq C_0 cap ( |\max\{u_j, u\} - u| >a) \to 0.
$$
In conclusion we get that $u_j \to u$ with respect to the  measure $d\la$ and $\lim \int |u_j| d\la= \int |u| d\la$. As a byproduct we also get that $u_j \to u$ in $L^1(d\la).$
\end{proof}

\begin{lem} \label{lem:uniform-L1-convergence} Still under the assumptions of Lemma~\ref{lem:L1-norm-convergence} we also suppose that  $\sup_j \int (dd^c u_j)^n \leq C_1$ for some $C_1>0$. Let $w_j \in \cE_0(\Om)$ be a uniformly bounded sequence of plurisubharmonic functions in $\Om$ satisfying $\sup_j \int (dd^c w_j)^n \leq C_2$ for some $C_2>0$. Assume that $w_j$ converges in capacity to $w\in \cE_0(\Om)$. Then, 
$$
	\lim_{j\to \infty}  \int |u- u_j| (dd^c w_j)^n = 0.
$$
\end{lem}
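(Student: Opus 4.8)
The plan is to reduce the claim to three convergence statements, each of the form ``$\to\int u(dd^cw)^n$''. First I would normalize: replacing every function $f$ by $f/M$, where $M$ is a common bound for the sup-norms, multiplies $C_1$, $C_2$ and the integral to be estimated only by fixed powers of $M$, and afterwards $-1\le w_j\le 0$, so that $(dd^cw_j)^n(E)\le cap(E)$ for every Borel $E\subset\Om$ (the function $w_j+1$ is a competitor in the definition of capacity). Putting $v_j:=\max\{u,u_j\}\in\cE_0(\Om)$, the pointwise identity $|u-u_j|=2v_j-u-u_j$ gives
\begin{equation*}
\int|u-u_j|(dd^cw_j)^n=2\int v_j(dd^cw_j)^n-\int u(dd^cw_j)^n-\int u_j(dd^cw_j)^n ,
\end{equation*}
and it suffices to show that each of the three integrals on the right tends to $\int u(dd^cw)^n$.

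The first two are routine. Since $u_j\to u$ a.e.\ we have $v_j\to u$ a.e., and then $v_j\to u$ in capacity by the Hartogs Lemma (sandwich $u\le v_j\le(\sup_{k\ge j}v_k)^*$, the upper envelope decreasing to $u$). The capacity convergence $w_j\to w$ yields the weak convergence $(dd^cw_j)^n\to(dd^cw)^n$; combining it with $v_j\to u$ in capacity and the uniform domination $(dd^cw_j)^n\le cap$ — through the quasicontinuity of $u$, exactly as in the proof of Corollary~\ref{cor:L1-convergence} — gives $\int v_j(dd^cw_j)^n\to\int u(dd^cw)^n$, and the same argument with the fixed quasicontinuous function $u$ in place of $v_j$ gives $\int u(dd^cw_j)^n\to\int u(dd^cw)^n$. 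Neither step uses the hypothesis $\sup_j\int(dd^cu_j)^n\le C_1$.

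The remaining limit, $\int u_j(dd^cw_j)^n\to\int u(dd^cw)^n$, is the substantive one and the main obstacle: $u_j$ converges only in $L^1$, not in capacity, while $(dd^cw)^n$ may charge Lebesgue-null sets, so the continuity of the Monge--Amp\`ere operator cannot be invoked directly. Here I would integrate by parts (legitimate in $\cE_0$ after approximating by decreasing sequences of smooth psh functions),
\begin{equation*}
\int u_j(dd^cw_j)^n=\int w_j\,dd^cu_j\wedge(dd^cw_j)^{n-1} ,
\end{equation*}
and analyze the positive measures $\nu_j:=dd^cu_j\wedge(dd^cw_j)^{n-1}$. The Cegrell mixed-mass inequality bounds their total mass, $\int_\Om\nu_j\le C_1^{1/n}C_2^{(n-1)/n}$ — this is precisely where $C_1$ is used — and the Chern--Levine--Nirenberg estimate gives $\nu_j\le C\,cap$; iterating the wedge-convergence lemma (at each of the $n-1$ steps one wedges a weakly convergent, locally mass-bounded current with $dd^cw_j$, using $w_j\to w$ in capacity) yields $\nu_j\to\nu:=dd^cu\wedge(dd^cw)^{n-1}$ weakly. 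Finally, since $w_j\to w$ in capacity, $\nu_j\to\nu$ weakly, and $\nu_j\le C\,cap$, the quasicontinuity argument already used gives $\int w_j\,d\nu_j\to\int w\,d\nu=\int u(dd^cw)^n$, and the three limits combine to finish the proof. I expect all the difficulty to sit in this last passage: the bound $\sup_j\int(dd^cu_j)^n\le C_1$ is what supplies the tightness needed for the weak convergence of the mixed currents $\nu_j$ together with the uniform domination $\nu_j\le C\,cap$ that lets the pairing with $w_j$ survive the limit; the rest is the familiar interplay of the Hartogs Lemma, quasicontinuity, and continuity of mixed Monge--Amp\`ere operators that already underlies Corollary~\ref{cor:L1-convergence}.
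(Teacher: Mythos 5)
Your reduction of $\int|u-u_j|(dd^cw_j)^n$ to the three integrals $\int v_j(dd^cw_j)^n$, $\int u(dd^cw_j)^n$, $\int u_j(dd^cw_j)^n$ is sound, and the treatment of the first two is in the right spirit. The gap is in the third, and it is precisely the difficulty the paper's proof is designed to avoid. After integrating by parts you need
\[\nu_j:=dd^cu_j\wedge(dd^cw_j)^{n-1}\longrightarrow dd^cu\wedge(dd^cw)^{n-1}\quad\text{weakly},\]
but the ``wedge-convergence lemma'' you invoke (Bedford--Taylor/Cegrell/Xing continuity of mixed Monge--Amp\`ere operators) requires every varying \emph{potential} in the product to converge \emph{in capacity}. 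Here $w_j\to w$ in capacity, but $u_j$ converges only in $L^1$ and a.e.\ with respect to Lebesgue measure; that gives $dd^cu_j\to dd^cu$ only in the sense of distributions, which is far weaker. If you try to push the weak convergence through by pairing with a test function $\chi$, integration by parts leaves you with $\int u_j\,\alpha_\pm\wedge(dd^cw_j)^{n-1}$ where $\alpha_\pm\wedge(dd^cw_j)^{n-1}$ may be singular with respect to Lebesgue measure, so a.e.-Lebesgue convergence of $u_j$ does not let you pass to the limit; you would be reduced to proving an $(n-1)$-dimensional version of the very lemma you are trying to prove. In short, the assertion ``$\nu_j\to\nu$ weakly'' is not justified by the hypotheses, and the mass bound $\sup_j\int(dd^cu_j)^n\le C_1$ gives tightness but not identification of the limit.

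The paper's proof dodges exactly this. It decomposes $|u-u_j|=(\phi_j-u)+(\phi_j-u_j)$ with $\phi_j=\max\{u,u_j\}$: the first term is handled directly since $\phi_j\to u$ in capacity (Hartogs, as you note), and for the second the authors never attempt weak convergence of a mixed current involving $dd^cu_j$. Instead they compare $\int(\phi_j-u_j)(dd^cw_j)^n$ with $\int(\phi_j-u_j)(dd^cw_k)^n$ for a \emph{fixed} large $k$, via integration by parts and the capacity convergence of $w_j$ (this is where $C_1$, $C_2$ and Corollary~\ref{cor:mass2} enter, to bound the mixed masses uniformly), and then apply Corollary~\ref{cor:L1-convergence} to the single frozen measure $d\la=(dd^cw_{k_0})^n$. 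Corollary~\ref{cor:L1-convergence}, built on the Banach--Saks argument of Lemma~\ref{lem:L1-norm-convergence}, is precisely the tool that lets one pass from a.e.-Lebesgue convergence of $u_j$ to $L^1(d\la)$-convergence for a fixed measure $d\la\le C\,cap$; it is not a statement about convergence of the varying measures $\nu_j$. If you want to salvage your route you would have to supply, in place of the black-box ``wedge-convergence lemma,'' an argument of this freeze-and-compare type at each of the $n-1$ wedge steps, keeping careful track of subsequence extractions — at which point you have essentially reconstructed the paper's proof in a more roundabout way.

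Two smaller points: (i) your preliminary claim $\int u(dd^cw_j)^n\to\int u(dd^cw)^n$ also needs tightness near $\partial\Om$ (weak convergence on $\Om$ tests only against $C_c(\Om)$, while $u$ need not have compact support); this is fixable using the $\cE_0$ mass bounds but should be said. (ii) The hypothesis $\sup_j\int(dd^cu_j)^n\le C_1$ is used in the paper not to get weak convergence of $\nu_j$, but to bound the mixed masses $\int dd^c(\phi_j+u_j)\wedge T$ via Cegrell's inequality so that the ``$|w_j-w_k|>\veps$'' set contributes $O(cap)$ and the complement contributes $O(\veps)$ — a different role than the one you assign it.
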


\begin{proof} Note that $|u-u_j| = (\max\{u, u_j\} - u_j) + (\max\{u, u_j\} - u)$. First, as in the proof of Corollary~\ref{cor:L1-convergence} we have $\phi_j := \max\{u, u_j\} \to u$ in capacity. Fix $\veps>0$. Then,  when $j $ is large,
$$\begin{aligned}
	\int _{\Om} (\max\{u, u_j\} -u) (dd^c w_j)^n 
&\leq \int_{\{|\phi_j -u|>\veps\}} (dd^c w_j)^n + \veps \int_{\Om} (dd^c w_j)^n \\
&\leq C_0 cap (|\phi_j -u|>\veps) + C_2 \veps.
\end{aligned}$$
Therefore, $\lim_{j\to \infty}  \int (\phi_j-u) (dd^c w_j)^n =0$. Next, we consider for $j>k$,
$$
\int (\phi_j -u_j) (dd^c w_j)^n - \int (\phi_j - u_j) (dd^c w_k)^n = \int (\phi_j - u_j) dd^c (w_j - w_k) \wed T ,
$$
where $T= T(j,k) = \sum_{s=1}^{n-1} (dd^c w_j)^s \wed (dd^c w_k)^{n-1-s}$. By integration by parts 
$$\begin{aligned}
	\int (\phi_j - u_j) dd^c (w_j - w_k) \wed T 
&= \int (w_j - w_k) dd^c (\phi_j -u_j) \wed T \\
&\leq \int |w_j - w_k| dd^c (\phi_j + u_j) \wed T.
\end{aligned}
$$
Since $\|w_j \|_\infty, \|u_j\|_\infty \leq A$ in $\Om$ it follows that 
$$\begin{aligned}
	 \int_{\Om} |w_j - w_k| dd^c (\phi_j + u_j) \wed T 
&\leq  A \int_{\{|w_j -w_k| >  \veps\}}  dd^c (\phi_j + u_j) \wed T \\ &\quad+ \veps\int_{\{|w_j -w_k| \leq \veps\}}  dd^c (\phi_j + u_j) \wed T  \\
&\leq  A^{n+1} cap( |w_j -w_k| >  \veps)  + 	C \veps,
\end{aligned}$$
where the uniform bound for the second integral on the right hand side followed from \cite{Ce04} (see also Corollary~\ref{cor:mass2} below).
It means that the left hand side is less than $2C \veps$ for some $k_0$ and every $j>k\geq k_0$. Thus, 
$$\begin{aligned}
\int (\phi_j -u_j) (dd^c w_j)^n 
&\leq  \int (\phi_j -u_j) (dd^c w_k)^n \\ 
&\quad +	\left|\int (\phi_j -u_j) (dd^c w_j)^n - \int (\phi_j - u_j) (dd^c w_k)^n \right| \\
&\leq  \int (\phi_j -u_j) (dd^c w_k)^n  + 2C \veps \\
&\leq \int |u-u_j| (dd^c w_{k})^n + 2 C \veps.
\end{aligned}$$
Fix $k=k_0$ and apply Corollary~\ref{cor:L1-convergence} for $d\la = (dd^c w_{k_0})^n$ to get that for $j \geq k_1 \geq k_0$
$$
	 \int (\phi_j -u_j) (dd^c w_j)^n  \leq (2C + 1) \veps.
$$
Since $\veps>0$ was arbitrary, the proof of the lemma is completed.
\end{proof}

\section{Bounded subsolution theorems}
\label{sec:bounded-ss}

Our goal in this section is the proof of Theorem~\ref{thm:bounded-ss}, but first
we shall prove  it in the special case of $M \equiv \Om \subset \bC^n$ - a strictly  pseudoconvex bounded domain. 
Then the general statement will follow from this and the balayage procedure.
Let $\mu$ be a positive Radon measure in $\Om$ and  let $\vphi$ be a continuous function on the boundary $\d\Om$. Assume that $\om$ is a Hermitian form in a neighborhood of $\bar\Om$.

\begin{thm} \label{thm:bounded-subsolution}Suppose that $d\mu \leq (dd^c v)^n$ for some bounded plurisubharmonic function $v$ in $\Om$ with $\lim_{z \to \d \Om} v(z) =0$. Then exists a unique $\om$ -plurisubharmonic function $u\in PSH(\Om , \om) \cap L^\infty(\Om )$ solving 
\[\begin{aligned} 
&	(\om+ dd^c u)^n = d\mu, \\
&	 \lim_{\zeta \to z} u(\zeta) = \vphi(z) \quad \mbox{ for } z\in \d \Om.
\end{aligned}\]
\end{thm}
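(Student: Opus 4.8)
The plan is to reduce the Hermitian Dirichlet problem to a limit of the standard Bedford--Taylor--K\l odziej theory via the auxiliary quantity $\Om\wedge(\cdot)$ absorbed into the datum, and to produce the solution by the Perron envelope whose envelope property is upgraded to a genuine solution using the convergence results of Section~\ref{sec:cegrell}. Concretely, write $\om = dd^c\rho$ for a smooth (not necessarily psh) $\rho$ on a neighborhood of $\ov\Om$, so that an $\om$-psh $u$ corresponds to the psh function $\rho+u$ and $(\om+dd^cu)^n$ becomes an ordinary Monge--Amp\`ere mass; the loss is that $\rho$ is only bounded and quasi-psh, which is exactly why the stronger convergence machinery (Lemma~\ref{lem:L1-norm-convergence}, Corollary~\ref{cor:L1-convergence}, Lemma~\ref{lem:uniform-L1-convergence}) is needed in place of the classical continuity arguments.

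\medskip

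First I would set up the Perron family
$$
\Fc = \Big\{ w \in PSH(\Om,\om)\cap L^\infty(\Om) \;:\; (\om+dd^cw)^n \geq \mu, \ \limsup_{\zeta\to z} w(\zeta) \leq \vphi(z)\ \forall z\in\d\Om \Big\},
$$
which is nonempty: the subsolution $v+\psi$, with $\psi$ the harmonic-type extension handling the boundary data $\vphi$, belongs to it after a suitable additive correction. Set $u = (\sup_{w\in\Fc} w)^*$. Standard Choquet-type arguments (continuity of $\vphi$, existence of a psh subbarrier at each boundary point coming from strict pseudoconvexity, and an upper barrier from $\rho$ plus a large multiple of a defining function) give $\lim_{\zeta\to z}u(\zeta)=\vphi(z)$ for all $z\in\d\Om$ and $u\in PSH(\Om,\om)\cap L^\infty(\Om)$. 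The monotone/Choquet lemma also gives $u\in\Fc$, so $(\om+dd^cu)^n\geq\mu$. The content is the reverse inequality $(\om+dd^cu)^n\leq\mu$.

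\medskip

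For that reverse inequality I would run the classical balayage-at-a-ball argument, but carefully. Suppose for contradiction that $(\om+dd^cu)^n(B) > \mu(B)$ on some small coordinate ball $B\Subset\Om$. Solve the auxiliary Dirichlet problem on $B$ for the measure $\mu|_B$ with boundary data $u|_{\d B}$ (this is where the hypothesis $\mu\leq(dd^cv)^n$ is used: it furnishes a subsolution on $B$, hence solvability by the classical bounded subsolution theorem of \cite{Ko95} applied on $B$ with the fixed smooth form $\om$), obtaining $\tilde u\geq u$ on $B$, equal to $u$ near $\d B$, with $(\om+dd^c\tilde u)^n = \mu$ on $B$. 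Gluing $\tilde u$ to $u$ produces a competitor in $\Fc$ that dominates $u$, and strict positivity of the defect forces $\tilde u > u$ somewhere, contradicting maximality of $u$ — \emph{provided} one knows the glued function still lies in $\Fc$, i.e. that the inequality $(\om+dd^c\tilde u)^n\geq\mu$ survives the gluing and that the Monge--Amp\`ere operator behaves well under the max construction. Here the subtlety special to the Hermitian case is that, unlike the K\"ahler setting, one cannot freely compare global masses, so I expect the main obstacle to be the local comparison step: showing that the defect $(\om+dd^cu)^n - \mu$ is genuinely concentrated and can be removed by local balayage without the error terms from $\om$ not being closed spoiling the inequality $(\om+dd^c w)^n\geq\mu$ on overlaps. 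This is precisely where the $L^1$-type convergence lemmas enter: approximating $\mu$ and the envelope by nice data, one controls $\int|u-u_j|(dd^cw_j)^n$-type quantities to pass the local comparison to the limit. Uniqueness is then immediate from the standard comparison principle on $\Om$ for the fixed Hermitian form $\om$ (or can be deferred to Corollary~\ref{cor:uniqueness}).
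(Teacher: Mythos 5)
Your plan has two problems, one small and one fatal; the fatal one is a circularity that the paper's actual proof is carefully engineered to avoid.

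The first issue is the opening reduction "write $\om = dd^c\rho$". A Hermitian form $\om$ is in general \emph{not} closed, so there is no potential $\rho$, not even locally; this is the whole source of the error terms in the Hermitian theory. What the paper can and does produce is a strictly plurisubharmonic $g$ with $dd^c g \geq \om$ on a neighborhood of $\ov\Om$ (using the $C^2$ extension of the boundary datum). Then $\wh u := u+g$ is psh whenever $u$ is $\om$-psh and $(\om+dd^cu)^n \leq (dd^c\wh u)^n$, but this is an \emph{inequality} between measures, not an identity; one cannot simply translate $(\om+dd^cu)^n=\mu$ into $(dd^c\wh u)^n = \tilde\mu$ and invoke \cite{Ko95}. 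The entire technical apparatus of Section~\ref{sec:cegrell} (Lemma~\ref{lem:mass1}, Corollary~\ref{cor:mass2}, Lemmas~\ref{lem:3convergence}, \ref{cor:weak-convergence}) exists precisely because one only has this one-sided comparison together with Cegrell-type mass and energy bounds.

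The second, and central, issue is the balayage step. You want to establish $(\om+dd^cu)^n \leq \mu$ by locally replacing $u$ on a small ball $B\Subset\Om$ with a solution of $(\om+dd^c\tilde u)^n=\mu$ on $B$ with boundary data $u|_{\d B}$. But solving that local Dirichlet problem, for the Hermitian operator $(\om+dd^c\cdot)^n$ and a measure dominated by $(dd^cv)^n$ with $v$ merely bounded psh, is \emph{exactly} the statement of Theorem~\ref{thm:bounded-subsolution} (a ball is a strictly pseudoconvex domain). The argument is therefore circular unless you can solve the local problem by other means, and your proposed shortcut — invoking \cite{Ko95}, which concerns $(dd^cu)^n=\mu$, "applied on $B$ with the fixed smooth form $\om$" — does not go through, again because $\om$ is not $dd^c$-exact on $B$. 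In the paper, the Perron envelope and the balayage lemma (Lemma~\ref{lem:lift}) are used only to pass from the domain result to the manifold result (Theorem~\ref{thm:bounded-ss}); they \emph{consume} Theorem~\ref{thm:bounded-subsolution} rather than prove it.

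What the paper actually does for Theorem~\ref{thm:bounded-subsolution} is quite different: after reductions (compact support of $\mu$ and of $(dd^cv)^n$, $C^2$ boundary data), it regularizes the subsolution, writes $(dd^cv_j)^n = f_jdV_{2n}$, solves $(\om+dd^cu_j)^n = hf_j\,dV_{2n}$ with the continuous-data Hermitian theory from \cite{KN1}, sets $u=(\limsup u_j)^*$, and then uses the Cegrell-class convergence machinery to force $\om_{u_j}^n \to \om_u^n$ weakly along a subsequence; the uniform mass bounds come from the auxiliary function $g$ and Cegrell's inequalities. Your proposal correctly identifies that the convergence lemmas must play a role and correctly identifies the Perron family as relevant, but it assigns them to the wrong theorem: Perron/balayage is the engine of Theorem~\ref{thm:bounded-ss}, not of Theorem~\ref{thm:bounded-subsolution}, and the convergence lemmas are not a patch for a gluing step but the entire mechanism by which solutions for nice data are made to converge to a solution for the given $\mu$.
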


We begin with showing that it is enough to prove the statement under additional hypothesis on  $\vphi , d\mu $ and $(dd^c v)^n$ .
This  reduction is done in three steps:

{\bf Step 1:} One can assume that $\supp \mu$ is compact in $\Om$.  Indeed, let $u_0 \in PSH(\Om, \om) \cap C^0(\bar\Om)$ be the solution satisfying $u_0 =\vphi$ on $\d\Om$ and
$ (\om + dd^cu_0)^n = 0$ in $\Om$ (it exists thanks to \cite[Corollary~4.1]{KN1}). 
Let $\eta_j$ be a  non-decreasing sequence of cut-off functions such that  $\eta_j \uparrow 1$ on $\Om$. 
Then, the sequence of solution $u_j$'s corresponding to $\mu_j = \eta_j \mu$ is uniformly bounded by $u_0 + v \leq u_j \leq u_0$. Hence, by the comparison principle and the convergence theorem 
(\cite{DK12}, \cite[Corollary~3.4]{KN1})
they will decrease to the solution for $\mu$.

{\bf Step 2:} We may assume further that the boundary data is in $C^2(\d \Om)$. Indeed, suppose that the problem is solvable for $\mu$ with compact support and let $\vphi_k\in C^2(\d\Om)$ be a sequence that decreases to $\vphi\in C^0(\d\Om)$. Then the sequence of solutions $u_k$ to 
$$(\om+ dd^c u_k)^n = \mu, \quad u_k = \vphi_k \quad \text{on } \d\Om$$
is decreasing and uniformly bounded. The limit $u = \lim u_k$ is the required solution for the continuous boundary data.

{\bf Step 3:} Reduction  to the case of $v$ defined in a neighborhood of $\bar\Om$ with $\lim_{z \to \d\Om} v(z) =0$, and the support of $(dd^c v)^n$  compact in $\Om$. We already suppose that $\mu$ has a compact support in $\Om$. Then we can modify the subsolution $v$ so that $v$ is defined in a neighborhood of $\bar\Om$ and it satisfies for any $z\in \d\Om$, 
\[	\lim_{\zeta \to z} v(\zeta) =0.
\] 
By the balayage procedure we may further assume that the support of $(dd^c v)^n$ is compact in $\Om$. 

With the above additional assumptions we proceed to define the expected solution.
For  $v$  as in {\em Step 3}, we consider the standard regularizing sequence $v_j \downarrow v$. Then, we write
$(dd^c v_j)^n = f_j dV_{2n}$. By \cite{Ko96} there exists $\cw v_j \in PSH(\Om) \cap C^0(\bar\Om)$ such that  $\cw v_j =0$ on $\d \Om$, and  
\[
	(dd^c \cw v_j)^n = f_j dV_{2n} \quad\text{in } \Om.
\]
We observe that by the Dini theorem $v_j$ converges to $v$ uniformly on compact sets, where the restriction of $v$ is continuous. Consequently, $\cw v_j$  converges to $v$ on those compact sets because by the stability estimate for the Monge-Amp\`ere equation
$$
	\sup_{\Om} |\cw v_j - v_j| \leq \sup_{\d \Om} |\cw v_j - v_j| = \sup_{\d \Om} |v_j -v|.
$$
Note also that
\[\label{eq:mass-subsolution}
	\int_\Om (dd^c \cw v_j)^n \leq C_1.
\]
This follows from the  compactness of the  support of  $\nu =(dd^c v)^n.$

Let $0\leq h \leq 1$ be a continuous function with compact support in $\Om$.  Notice that 
\[\label{eq:weak-approximation}
	hf_j dV_{2n} \to h (dd^cv)^n \quad \text{weakly}.
\]
We first show the existence of a solution for the measure  $h(dd^c v)^n$ obtained as the limit of solutions of $h f_j dV_{2n}$ for a certain subsequence of $\{f_j \}.$ 
Applying \cite[Corollary~0.4]{KN1} we solve the Dirichlet problem
\[\label{seq}\begin{aligned}
&	u_j \in PSH(\Om, \om) \cap C^0(\bar\Om),  \\
&	(\om+ dd^c u_j)^n = h f_j dV_{2n}, \\
&	u_j(z) = \vphi(z) \quad \mbox{for } z \in \d\Om.
\end{aligned}
\]
We define
$$
	u = (\limsup_{j\to \infty} u_j)^* = \lim_{j\to \infty} (\sup_{\ell \geq j} u_\ell)^*.
$$
By passing to a subsequence we may assume that $u_j \to u$ in $L^1(\Om)$ and $u_j \to u$ a.e. with respect to the Lebesgue measure.  Note also that $u \in PSH (\Om, \om) \cap L^\infty(\Om)$ and
$$
	\lim_{z \to \ze} u(z) = \vphi(\ze) \quad\text{for all } \ze \in \d\Om.
$$
This $u$ will be shown to be the solution we are seeking. To do this we  need to prove several  lemmas.

First observe that $\vphi$ can be extended to a $C^2$ smooth function in a neighborhood of $\bar\Om$. 
This allows to produce  a strictly plurisubharmonic function  $g$  in a neighborhood of $\bar\Om$    such that  $dd^c g \geq \om$  and   $g = -\vphi$ on $\d\Om$. 
Let us use the notation
$$
	\wh u = u + g, \quad \wh u_j = u_j + g.
$$

Using an idea of Cegrell \cite[page 210]{Ce98}  we first show that 
\begin{lem}\label{oo}
$\wh u_j \in \cE_0(\Om)$.
\end{lem}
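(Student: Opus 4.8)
The plan is to verify the three defining properties of membership in $\cE_0(\Om)$ for $\wh u_j = u_j + g$: that it is plurisubharmonic on $\Om$, that it is bounded on $\Om$, that it has zero boundary values, and that $\int_\Om (dd^c \wh u_j)^n < +\infty$. The first two are immediate: $u_j \in PSH(\Om,\om)\cap C^0(\bar\Om)$ and $g$ is (strictly) plurisubharmonic with $dd^c g \geq \om$, so $dd^c \wh u_j = dd^c u_j + dd^c g \geq dd^c u_j + \om \geq 0$, hence $\wh u_j \in PSH(\Om)$; and $u_j, g$ are both bounded on $\bar\Om$, so $\wh u_j$ is bounded. For the boundary values, on $\d\Om$ we have $u_j = \vphi$ and $g = -\vphi$, so $\wh u_j = 0$ on $\d\Om$; since $u_j$ and $g$ are continuous up to the boundary, $\lim_{z\to\ze}\wh u_j(z) = 0$ for every $\ze\in\d\Om$.

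The only substantive point is the finiteness of the total Monge-Amp\`ere mass $\int_\Om (dd^c \wh u_j)^n$. Here I would follow the idea of Cegrell cited in the text. Since $\wh u_j \geq 0$ would be false in general, but $\wh u_j$ is bounded, one can compare with a suitable competitor. The natural approach: let $\rho$ be a smooth bounded strictly plurisubharmonic exhaustion-type function (or simply use $g$ itself, or $C g$ for large $C$) that stays below $\wh u_j$ near $\d\Om$. Because $\wh u_j$ is continuous on $\bar\Om$ with $\wh u_j = 0$ on $\d\Om$, and $g$ (rescaled) can be arranged to be negative on $\d\Om$ and dominated, one compares $(dd^c \wh u_j)^n$ on $\Om$ with the mass of $(dd^c(\wh u_j + g))^n$ or similar, and invokes the standard fact that for two bounded psh functions that agree near the boundary, the difference of total masses is controlled; alternatively use that $(dd^c u_j)^n = hf_j dV_{2n} = h(dd^c v_j)^n$ has mass bounded by \eqref{eq:mass-subsolution}-type considerations via the compactness of the support of $\nu = (dd^c v)^n$, together with $(dd^c g)^n$ having finite mass since $g$ is smooth on a neighborhood of $\bar\Om$. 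Then expand $(dd^c \wh u_j)^n = (dd^c u_j + dd^c g)^n = \sum_{k=0}^n \binom{n}{k} (dd^c u_j)^k \wed (dd^c g)^{n-k}$ and bound each mixed term by the Chern-Levine-Nirenberg inequalities on a slightly larger relatively compact domain, using that $u_j$ is uniformly bounded ($u_0 + v \le u_j \le u_0$) and $g$ is smooth.

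The main obstacle I anticipate is making the Chern-Levine-Nirenberg / integration-by-parts argument clean in a setting where $u_j$ does not vanish on the boundary while $g$ does not vanish either — so one cannot directly drop boundary terms. The cleanest fix is the observation that $\wh u_j$ itself vanishes continuously on $\d\Om$, so one should work with $\wh u_j$ as the single function with good boundary behavior: first establish that a bounded psh function on a strictly pseudoconvex $\Om$ that extends continuously to $\bar\Om$ with zero boundary values and has locally finite Monge-Amp\`ere mass automatically lies in $\cE_0(\Om)$ provided the global mass is finite, and then get the global mass bound by comparing $\wh u_j$ with the solution $\psi_j$ of $(dd^c\psi_j)^n = (dd^c u_j)^n + (dd^c g)^n \cdot \mathbf{1}$ on a neighborhood — or, most simply, dominate $\int_\Om(dd^c\wh u_j)^n$ by $\int_\Om(dd^c u_j)^n + (\text{terms involving }g)$ using that $(dd^c u_j)^n$ is supported where $h$ is, which is compact in $\Om$, so all the integrations by parts needed for the mixed terms take place away from $\d\Om$ and the boundary terms vanish. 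This compact-support observation, already available from the reduction Steps 1–3, is what makes the estimate go through, and I expect the write-up to lean on it.
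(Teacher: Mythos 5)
Your verification of the easy parts (psh, bounded, zero boundary values) is correct. The gap is in the mass bound, which you rightly identify as the substantive point but for which the argument you sketch does not go through.

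First, the Newton expansion you write,
\[
(dd^c \wh u_j)^n = \sum_{k=0}^n \binom{n}{k} (dd^c u_j)^k \wed (dd^c g)^{n-k},
\]
is problematic because $dd^c u_j$ is \emph{not} a positive current: $u_j$ is only $\om$-psh, so $dd^c u_j = \om_{u_j} - \om$ is a difference of positive forms, and the mixed terms are not positive measures, so Chern--Levine--Nirenberg does not apply to them as written. The decomposition that makes all terms positive is $dd^c \wh u_j = \om_{u_j} + \ga$ with $\ga = dd^c g - \om \geq 0$, which is exactly what Lemma~\ref{lem:mass1} of the paper uses. Second, and more seriously, the compact support of $\om_{u_j}^n = h f_j\, dV_{2n}$ does not make the mixed terms $\om_{u_j}^k \wed \ga^{n-k}$ ($0<k<n$) compactly supported: $\om_{u_j}$ is a positive current supported on all of $\Om$ even when its top power vanishes near $\d\Om$. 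So the claim that the relevant integrations by parts ``take place away from $\d\Om$'' is false, and the boundary terms do not disappear for free. Third, if you try to control those mixed terms via Cegrell's inequalities, you need to know $\wh u_j\in\cE_0(\Om)$ already, which is what you are trying to prove --- the argument becomes circular.

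The paper sidesteps all of this by producing an explicit lower bound in $\cE_0(\Om)$. It invokes the Guan--Li theorem to get a $C^2$ function $\Phi\in PSH(\Om,\om)$ with $(\om+dd^c\Phi)^n\equiv 1$ and $\Phi=\vphi$ on $\d\Om$; then from
\[
(\om+dd^c u_j)^n \leq (dd^c\cw v_j)^n \leq (\om + dd^c(\cw v_j+\Phi))^n
\]
and the comparison principle it gets $u_j \geq \cw v_j + \Phi$, hence $\wh u_j \geq \cw v_j + (\Phi+g)$. Since $\Phi+g$ is psh, $C^2$ up to $\bar\Om$, and vanishes on $\d\Om$, it lies in $\cE_0(\Om)$, and $\cw v_j\in\cE_0(\Om)$ by construction; so the right-hand side is in $\cE_0(\Om)$. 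Now the Cegrell class property (the ``idea of Cegrell'' cited in the text) that a nonpositive psh function dominated below by an element of $\cE_0(\Om)$ is itself in $\cE_0(\Om)$, with smaller MA mass, finishes the proof. Your passing remark about ``comparing $\wh u_j$ with the solution $\psi_j$ of $(dd^c\psi_j)^n = (dd^c u_j)^n + \cdots$'' points vaguely in this direction, but as stated it would produce an \emph{upper} bound for $\wh u_j$ (by the comparison principle), which gives no mass control; what is needed, and what the paper supplies, is a \emph{lower} bound in $\cE_0(\Om)$, and constructing one requires the auxiliary smooth solution $\Phi$.
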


\begin{proof} By  \cite[Theorem~1.1]{GL10} there exists  $\Phi\in PSH(\Om, \om)$ a $C^2$-smooth function on $\bar\Om$ that solves $(\om + dd^c \Phi)^n \equiv 1$ and $\Phi= \vphi$ on $\d\Om$. 
Then,  $$(\om + dd^c u_j)^n \leq (dd^c \cw v_j)^n \leq (\om + dd^c \cw v_j + dd^c\Phi)^n.$$
By the comparison principle \cite[Corollary~3.4]{KN1} we have $u_j \geq \cw v_j + \Phi$. So, $\wh u_j = u_j +g\geq \cw v_j + \Phi + g$. Since $\Phi +g \in PSH(\Om) \cap C^2(\bar\Om)$, and equals zero on $\d\Om$ it belongs to $ \cE_0(\Om)$. Thus  $\cw v_j + \Phi +g \in \cE_0(\Om)$, and so the same is true about  $\wh u_j$.
\end{proof}

\begin{lem}\label{lem:mass1} There exists a uniform constant $C_0$ such that 
$$
	\int_\Om (dd^c \wh u_j)^n \leq C_0.
$$
\end{lem}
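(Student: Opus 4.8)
The plan is to bootstrap on the quantity $X_j := \int_\Om(dd^c\wh u_j)^n$, which we already know to be \emph{finite} by Lemma~\ref{oo} ($\wh u_j\in\cE_0(\Om)$); this a priori finiteness is the only thing borrowed from that lemma, and it is exactly what makes the estimate below non-vacuous. Write $\wh u_j = u_j+g$ and set $\beta := dd^c g - \om$, a continuous $(1,1)$-form on $\bar\Om$ with $\beta\ge 0$ since $dd^c g\ge\om$; thus $dd^c\wh u_j = \om_{u_j} + \beta$ with $\om_{u_j} := \om + dd^c u_j\ge 0$. Two comparisons between these forms will be used: $\om_{u_j}\le dd^c\wh u_j$, and $\beta\le dd^c(\Phi+g)$, the latter because $dd^c(\Phi+g)-\beta = \om+dd^c\Phi = \om_\Phi\ge 0$, where $\Phi$ is the $C^2$ function appearing in the proof of Lemma~\ref{oo}.

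Expanding $(dd^c\wh u_j)^n=(\om_{u_j}+\beta)^n=\sum_{k=0}^n\binom nk\om_{u_j}^k\wed\beta^{n-k}$, the top term is handled directly by the equation \eqref{seq}: $\om_{u_j}^n = h f_j\,dV_{2n}$, so $\int_\Om\om_{u_j}^n\le\int_\Om f_j\,dV_{2n}=\int_\Om(dd^c\cw v_j)^n\le C_1$ by \eqref{eq:mass-subsolution}. For $0\le k\le n-1$, the standard monotonicity of Bedford-Taylor wedge products (replace $\om_{u_j}$ by the larger current $dd^c\wh u_j$, then $\beta$ by the larger form $dd^c(\Phi+g)$) gives $\int_\Om\om_{u_j}^k\wed\beta^{n-k}\le\int_\Om(dd^c\wh u_j)^k\wed\big(dd^c(\Phi+g)\big)^{n-k}$, and since both $\wh u_j$ and $\Phi+g$ lie in $\cE_0(\Om)$, the mixed Monge-Amp\`ere inequality (Corollary~\ref{cor:mass2}) bounds this by $X_j^{k/n}B^{(n-k)/n}$, where $B:=\int_\Om(dd^c(\Phi+g))^n$ is a fixed finite constant. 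Summing over $k$,
$$
	X_j\ \le\ C_1+\sum_{k=0}^{n-1}\binom nk B^{\frac{n-k}{n}}X_j^{\frac kn}.
$$

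To finish, put $t_j:=X_j^{1/n}\in[0,\infty)$; the displayed estimate says $t_j^{\,n}$ is at most a polynomial of degree $n-1$ in $t_j$ whose coefficients depend only on $n$, $C_1$, $B$, which forces $t_j\le C(n,C_1,B)$, i.e.\ $X_j\le C^n=:C_0$ uniformly in $j$. I expect the only genuinely delicate points to be (a) the bootstrap structure itself --- one cannot bound $X_j$ before knowing it is finite, so Lemma~\ref{oo} must come first --- and (b) confirming that Corollary~\ref{cor:mass2} is precisely the geometric-mean mixed-mass estimate for functions in $\cE_0(\Om)$; the positivity and monotonicity manipulations for the non-closed Hermitian form $\om_{u_j}$ are routine, carried out as usual through products with smooth closed positive forms and by factoring differences of positive currents.
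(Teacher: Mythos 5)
Your argument is essentially the paper's proof run in reverse: the paper expands $\int_\Om(\om+dd^c u_j)^n=\int_\Om(dd^c\wh u_j-\ga)^n\le C_1$ by the binomial theorem, moving the alternating lower-order terms to the right, whereas you expand $(dd^c\wh u_j)^n=(\om_{u_j}+\beta)^n$ directly into a sum of nonnegative pieces; both routes then bound the mixed terms by Cegrell's inequality and close the loop with the same one-variable polynomial bootstrap, made non-vacuous by the a priori finiteness from Lemma~\ref{oo}. Your version has marginally cleaner bookkeeping since all binomial terms carry the same sign, and using $\Phi+g$ as the auxiliary $\cE_0$ function rather than a separate defining function $\psi$ with $\om+\ga\le dd^c\psi$ is a harmless cosmetic change.

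The one thing you must fix is the citation: Corollary~\ref{cor:mass2} cannot be invoked here, because it is stated \emph{after} this lemma and its proof explicitly uses the conclusion of Lemma~\ref{lem:mass1} --- so the reference is circular. What you actually need is Cegrell's mixed Monge--Amp\`ere inequality for functions in $\cE_0(\Om)$,
$$
\int_\Om (dd^c a)^k\wed(dd^c b)^{n-k}\le\Bigl(\int_\Om(dd^c a)^n\Bigr)^{k/n}\Bigl(\int_\Om(dd^c b)^n\Bigr)^{(n-k)/n},
$$
which is exactly what the paper cites as \cite[Lemma~5.4]{Ce04}; that result is independent of Lemma~\ref{lem:mass1} and applies to $\wh u_j$ and $\Phi+g$ because both lie in $\cE_0(\Om)$ with finite (though not yet uniformly bounded) mass. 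With that substitution, your proof is correct.
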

 
 \begin{proof}  Set 
$\ga := dd^c g - \om.$
Then $\om + dd^c u_j = dd^c \wh u_j - \ga$.  It follows that 
\[
	 \int_\Om (dd^c \wh u_j -\ga)^n = \int_\Om (\om+ dd^c u_j)^n \leq \int_\Om (dd^c \cw v_j)^n \leq C_1.
\] 
 Using the Newton expansion for the integrand on the left hand side we get that
\[\label{eq:expansion}
	\int_\Om (dd^c \wh u_j)^n - \binom{n}{1} \int_\Om (dd^c \wh u_j)^{n-1} \wed \ga + \cdots + (-1)^n \binom{n}{n}\int_\Om \ga^n \leq C_1.
\]
We are going to show that for $k=1,...,n$,
\[
	\int_\Om (dd^c \wh u_j)^k \wed \ga^{n-k} \leq C_2
\]
for a uniform constant $C_2$.  Indeed, 
since $\ga$ is a smooth $(1,1)$ form in $\bar\Om$, there is a defining function $\psi\in \cE_0(\Om)$ of $\Om$ such that  $\om+ \ga \leq dd^c \psi$ on $\bar \Om$. Using the Cegrell inequality 
\cite[Lemma~5.4]{Ce04} we get  for every $k\geq 1$ 
\[\begin{aligned} \label{eq:lower-term}
	\int_\Om (dd^c \wh u_j )^k \wed \ga^{n-k} 
&\leq		\int_\Om (dd^c \wh u_j)^k \wed (dd^c \psi)^{n-k} \\
&\leq 	\left(\int_\Om (dd^c\wh u_j)^n\right)^\frac{k}{n} \left(\int_\Om (dd^c \psi)^n\right)^\frac{n-k}{n}.
\end{aligned}\]
If we write $m_j^n = \int (dd^c \wh u_j)^n$, it follows from \eqref{eq:expansion} and \eqref{eq:lower-term} that
$$
	m_j^n - const.\binom{n}{k} \sum_{n > k \text{ odd}} m_j^k \leq C_3 \quad \text{for all } j.
$$
Therefore, the total mass of $(dd^c \wh u_j)^n$ is bounded by a uniform constant independent of $j$. Consequently, 
\[\notag	\int_\Om (dd^c\wh u_j)^k \wed \ga^{n-k} \leq C_4
\]
follows by \eqref{eq:lower-term}.\end{proof}

We have also a more general statement. 

\begin{cor} \label{cor:mass2} There exists a uniform constant $C$ such that 
\[	\int_\Om T \leq C
\]
where $T$ are wedge products of $dd^c \wh u_j$ and $dd^c \cw v_k$. 
\end{cor}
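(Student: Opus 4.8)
The plan is to reduce the whole statement to a single application of Cegrell's mixed Monge-Amp\`ere inequality. A typical such $T$ can be written in the form
$$
T = dd^c\phi_1 \wed \cdots \wed dd^c\phi_n,
$$
where each $\phi_\el$ is one of the functions $\wh u_j$ or $\cw v_k$, with possibly varying indices. First I would check that all these functions lie in $\cE_0(\Om)$ and are uniformly bounded: this holds for $\wh u_j$ by Lemma~\ref{oo} (and the $L^\infty$-bound $u_0+v\le u_j\le u_0$ discussed in Step~1 together with smoothness of $g$ on $\bar\Om$), and for $\cw v_k$ because $\cw v_k\in PSH(\Om)\cap C^0(\bar\Om)$ vanishes on $\d\Om$ and has finite total Monge-Amp\`ere mass, hence belongs to $\cE_0(\Om)$ and is bounded.

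Next I would invoke the Cegrell inequality \cite[Lemma~5.4]{Ce04} — the same one already used in \eqref{eq:lower-term} — in its $n$-fold mixed form: for bounded functions $\phi_1,\dots,\phi_n\in\cE_0(\Om)$,
$$
\int_\Om dd^c\phi_1\wed\cdots\wed dd^c\phi_n \;\le\; \prod_{\el=1}^n\left(\int_\Om (dd^c\phi_\el)^n\right)^{\frac1n}.
$$
By Lemma~\ref{lem:mass1} every factor with $\phi_\el=\wh u_j$ is at most $C_0^{1/n}$, and by \eqref{eq:mass-subsolution} every factor with $\phi_\el=\cw v_k$ is at most $C_1^{1/n}$. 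Therefore $\int_\Om T \le \max\{C_0,C_1\}=:C$, which is the asserted uniform bound. If one prefers not to quote the $n$-fold version directly, the same conclusion follows by iterating the two-term Cegrell inequality exactly as in the proof of Lemma~\ref{lem:mass1}.

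There is no serious obstacle here; the only thing to be careful about is \emph{uniformity}: the bounds supplied by Lemma~\ref{lem:mass1} and \eqref{eq:mass-subsolution} are independent of the indices $j,k$, so the constant $C=\max\{C_0,C_1\}$ is genuinely uniform over all admissible wedge products $T$ of the forms $dd^c\wh u_j$ and $dd^c\cw v_k$.
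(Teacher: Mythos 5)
Your argument is correct and is essentially the paper's own: both rest on Cegrell's mixed Monge--Amp\`ere inequality \cite[Lemma~5.4]{Ce04} together with the uniform mass bounds from Lemma~\ref{lem:mass1} and \eqref{eq:mass-subsolution}, and the observation that $\wh u_j,\cw v_k\in\cE_0(\Om)$. The only (cosmetic) difference is that the paper writes $T=(dd^c\wh u_j)^p\wedge(dd^c\cw v_k)^q\wedge(dd^c\psi)^{n-p-q}$, padding with the smooth strictly plurisubharmonic defining function $\psi$ when $p+q<n$, and applies Cegrell in two steps; since $I(\psi)$ is finite this extra case is handled by exactly the same inequality you invoke.
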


\begin{proof} By an application of Cegrell's inequality \cite{Ce04} for every $k\geq 1$, 
\[	\int_\Om (dd^c  \cw v_j)^k  \wed (dd^c \psi)^{n-k}\leq C_4,
\]
where $\psi$ is a strictly plurisubharmonic function defining function of $\Om$ as in the proof of the above lemma.
Now using Cegrell's inequality one more time,
$$\begin{aligned}
 \int_\Om T 
&= 	\int_\Om (dd^c \wh u_j)^p \wed ( dd^c \cw v_k)^q \wed (dd^c \psi)^{n-p-q} \\
&\leq 	\left[I(\wh u_j) \right]^\frac{p}{n} \left[ I( \cw v_k)\right]^\frac{q}{n} \left[I (\psi)\right]^\frac{n-p-q}{n},
\end{aligned}
$$
where $I(w) = \int_\Om (dd^c w)^n$. Finally, all three factors on the right hand side are   bounded by \eqref{eq:mass-subsolution}, Lemma~\ref{lem:mass1} and the smoothness of $\psi$ on $\bar\Om$. 
\end{proof}

\begin{lem} \label{lem:3convergence} Let $ \{u_j\} \subset PSH (\Om , \om )$ be a uniformly bounded subsequence of functions satisfying $u_j(z) = \vphi(z) \quad \mbox{for } z \in \d\Om$
and $u_j \to u$ in $L^1(\Om)$ and $u_j \to u$ a.e. with respect to the Lebesgue measure.
Then one can pick a subsequence
$\{u_{j_s}\} $ such that for 
\[ \label{eq:hartogs-s} w_s = \max\{u_{j_s} , u -1/s\}.
\]
the following equalities hold
 \begin{itemize}
\item
[(a)] $ \lim_{s\to +\infty} \int_\Om |u_{j_s} - u|  (\om+dd^cu)^n =0. $
\item
[(b)] $	\lim_{s\to +\infty} \int_\Om |u_{j_s} -u| (\om + dd^c w_s)^n = 0.$
\item
[(c)] $	\lim_{s\to +\infty}\int_\Om |u_{j_s} -u| (\om+ dd^c u_{j_s} )^n =0.$
\end{itemize}
\end{lem}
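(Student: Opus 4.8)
The plan is to prove the three limits essentially by reducing each one to the convergence results of Section~\ref{sec:cegrell}, after passing to the translated functions $\wh u_j = u_j + g$ and $\wh u = u + g$ that live in the Cegrell class $\cE_0(\Om)$ (Lemma~\ref{oo}). The point is that $\om + dd^c u_j = dd^c \wh u_j - \ga$ where $\ga = dd^c g - \om$ is a smooth fixed form, so all the Monge-Amp\`ere-type expressions in (a)--(c) expand into wedge products of $dd^c \wh u_j$, $dd^c \wh u$ (or $dd^c \wh w_s$) and smooth forms, each of which has uniformly bounded mass by Lemma~\ref{lem:mass1} and Corollary~\ref{cor:mass2}. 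Since $|u_j - u| = |\wh u_j - \wh u|$, proving (a)--(c) is the same as proving the analogous statements with $\wh u_j, \wh u, \wh w_s$ in place of $u_j, u, w_s$ and $dd^c$ in place of $\om + dd^c$.

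For part (a): expand $(\om + dd^c u)^n = (dd^c \wh u - \ga)^n$ by the Newton binomial, so it suffices to show $\int_\Om |\wh u_j - \wh u|\, (dd^c \wh u)^k \wed \ga^{n-k} \to 0$ for each $k$. Dominating $\om + \ga \le dd^c\psi$ for a smooth defining function $\psi \in \cE_0(\Om)$ as in the earlier proofs, each term is controlled by $\int_\Om |\wh u_j - \wh u|\, (dd^c\wh u)^k \wed (dd^c\psi)^{n-k}$, which is a measure absolutely continuous with respect to capacity (by Cegrell's inequalities, i.e. Corollary~\ref{cor:mass2}-type bounds: $(dd^c\wh u)^k\wed(dd^c\psi)^{n-k}(E)\le C\,cap(E)$ via the Bedford--Taylor / Cegrell estimates). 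Hence Corollary~\ref{cor:L1-convergence} applied with $d\la$ this measure — first extracting the subsequence for which $\wh u_j \to \wh u$ a.e.\ and using that $\wh u_j$ is uniformly bounded — gives $\int |\wh u_j - \wh u|\,d\la \to 0$; a finite diagonal extraction handles all $k$ simultaneously. For part (c), the same expansion reduces it to $\int_\Om |\wh u_j - \wh u|\, (dd^c \wh u_j)^k \wed \ga^{n-k} \to 0$; now the measure depends on $j$, so one invokes Lemma~\ref{lem:uniform-L1-convergence} instead, with $w_j = \wh u_j$ and the test sequence also $\wh u_j$ (which converges in capacity to $\wh u$, by the Hartogs lemma argument already used in the proof of Corollary~\ref{cor:L1-convergence}), the mass bounds $\sup_j \int (dd^c\wh u_j)^n \le C_0$ coming from Lemma~\ref{lem:mass1}. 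Part (b) is the same as (c) with $w_s$ in place of $u_{j_s}$: since $w_s = \max\{u_{j_s}, u - 1/s\}$ decreases to $u$ and $\wh w_s = \max\{\wh u_{j_s}, \wh u - 1/s\} \searrow \wh u$ in $\cE_0(\Om)$ with uniformly bounded total Monge-Amp\`ere mass (comparison of masses for maxima, again Cegrell), we have $\wh w_s \to \wh u$ in capacity, and Lemma~\ref{lem:uniform-L1-convergence} applies with $w_j = \wh w_{j}$.

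The main obstacle is bookkeeping the subsequence extractions so that a single subsequence $\{u_{j_s}\}$ works for all three statements and for every exponent $k$ appearing in the binomial expansions, while simultaneously keeping the definition $w_s = \max\{u_{j_s}, u - 1/s\}$ consistent — in particular one must fix the subsequence first (to get a.e.\ convergence and the capacity convergence $\wh u_{j_s}\to \wh u$), then define $w_s$, then verify $\wh w_s \to \wh u$ in capacity, and only then apply the Section~\ref{sec:cegrell} lemmas; a final diagonal argument over the finitely many terms consolidates everything. The other delicate point is checking that the mixed measures $(dd^c\wh u)^k\wed(dd^c\psi)^{n-k}$ and $(dd^c\wh u_j)^k\wed(dd^c\psi)^{n-k}$ are uniformly dominated by capacity with a constant independent of $j$, which is exactly the kind of estimate packaged in Corollary~\ref{cor:mass2} together with the standard Bedford--Taylor Chern--Levine--Nirenberg / Cegrell inequalities.
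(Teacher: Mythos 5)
Your plan for parts (a) and (b) is broadly consistent with the paper's, if a bit more elaborate than necessary; the paper applies Corollary~\ref{cor:L1-convergence} to $d\la = \om_u^n$ directly (using $\om_u^n \leq (dd^c\wh u)^n \leq \|\wh u\|_\infty^n\,cap(\cdot)$, once one knows $\wh u\in \cE_0(\Om)$), and for (b) it exploits the boundary identity $w_s = u_{j_s}$ near $\d\Om$ together with Stokes to get $\int(dd^c\wh w_s)^n = \int(dd^c\wh u_{j_s})^n \leq C_0$, rather than appealing to a ``Cegrell comparison of masses for maxima,'' which is not a standard inequality as stated. Also note $w_s$ does not decrease to $u$: $u-1/s$ increases to $u$ and $u_{j_s}$ only converges a.e., so the sequence $\max\{u_{j_s},u-1/s\}$ is not monotone. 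What you actually need, and what the paper uses, is convergence of $w_s$ to $u$ in capacity via Hartogs and quasi-continuity, which you do also invoke; only the ``$\searrow$'' claim is wrong, not the conclusion. The Newton-binomial reduction in (a) also has a sign subtlety: $\ga = dd^cg - \om$ is smooth but not positive, so $(dd^c\wh u)^k\wed\ga^{n-k}$ is a signed measure, and you must either dominate $\ga$ between $\pm K\om$ and split, or simply avoid the expansion as the paper does.

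The genuine gap is in part (c). You propose to apply Lemma~\ref{lem:uniform-L1-convergence} with $w_j = \wh u_j$, which requires $\wh u_j \to \wh u$ in capacity. But the hypotheses only give $u_j\to u$ in $L^1$ and a.e.; neither implies convergence in capacity. The Hartogs-type argument in the proof of Corollary~\ref{cor:L1-convergence} shows that $\max\{u_j,u\}\to u$ in capacity (because $(\sup_{k\geq j}u_k)^*\searrow u$), but says nothing about the sets $\{u_j < u-\veps\}$, which can have capacity bounded away from zero even though $u_j\to u$ a.e. The paper sidesteps this entirely: it exploits the structural bound $(\om+dd^cu_j)^n \leq (dd^c\cw v_j)^n$ that holds for the specific approximants of \eqref{seq}, and the fact that $\cw v_j$ converges to $v$ uniformly on compacts (hence in capacity) because $\cw v_j$ is the MA-solution with continuous data decreasing to $0$ on $\d\Om$. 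Then Lemma~\ref{lem:uniform-L1-convergence} is applied with $w_j = \cw v_j + \Phi + g$ (or the analogous dominating sequence), not with $\wh u_j$. Without this observation the argument for (c) does not close, and you should not expect to prove it from the listed hypotheses alone: the convergence-in-capacity of the governing sequence is imported from the construction, not a consequence of $L^1$-convergence.
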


\begin{proof} Since $u_j - u = \wh u_j - \wh u$, where $\wh u_j, \wh u$ and $\om_u^n$ satisfy the assumption of  Corollary~\ref{cor:L1-convergence}, the proof of  (a) follows. 

By the Hartogs lemma $w_s$ converges to $u$ uniformly on any compact set $E$ such that $u_{|_E}$ is continuous. Combining this and the quasi-continuity of $u$ it follows that $w_s$ converges to $u$ in capacity. Therefore, by the convergence theorems in \cite{BT82} and \cite{DK12},
$$
	(\om+ dd^c u)^n = \lim_{s\to +\infty} (\om + dd^c w_s)^n. 
$$
We observe that $\om_u^n$ is a finite Radon measure in $\Om$. Recall the notation $\wh w_s = w_s + g$ and, $\wh u = u+g$, where $g$ is a strictly plurisubharmonic  defining function for $\Om$ such that $dd^c g \geq \om$ in a neighborhood of $\bar\Om.$
 Since $w_s = u_{j_s}$ in a neighborhood of $\d \Om$, it follows from Stokes' theorem  and Lemma~\ref{lem:mass1} that
$$
	\int_\Om (\om+ dd^c w_s)^n \leq \int_{\Om} (dd^c \wh w_s)^n =\int_{\Om} (dd^c \wh u_{j_s})^n \leq C_0. 
$$
Letting $s\to+\infty$ we get that $\int\om_{u}^n$ is finite,  and thus $\wh u \in \cE_0(\Om)$. 
Summarizing, $(\om + dd^cw_s)^n \leq (dd^c \wh w_s)^n$ and  $\wh w_s \to \wh u \in \cE_0(\Om)$ in capacity. Hence $(b)$ follows from Lemma~\ref{lem:uniform-L1-convergence}. 

Similarly, with the notation from Lemma~\ref{oo}, $(\om + dd^c u_j)^n \leq (dd^c \cw v_j)^n$ and $\cw v_j $ converges to $v$ in capacity, thus the proof of (c) follows.
\end{proof}

\begin{lem}\label{cor:weak-convergence} 
 Consider $\{ u_j \}$ from the previous lemma. 
Then for a suitably chosen subsequence $\{u_{j_s}\} \subset \{u_j\}$ we have
$$(\om + dd^c u_{j_s})^n \to (\om + dd^c u)^n \quad \text{weakly}.$$
\end{lem}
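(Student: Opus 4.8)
The plan is to show that $\om_{u_{j_s}}^n$ and $\om_{w_s}^n$ have the same weak limit along the subsequence $\{j_s\}$ furnished by Lemma~\ref{lem:3convergence}. Indeed, as observed in the proof of that lemma, $w_s\to u$ in capacity, so the convergence theorems of \cite{BT82} and \cite{DK12} already give $\om_{w_s}^n\to\om_u^n$ weakly; moreover the total masses $\int_\Om\om_{u_{j_s}}^n$ are uniformly bounded by Lemma~\ref{lem:mass1} (since $\om_{u_{j_s}}^n\le(dd^c\cw v_{j_s})^n$). Hence it suffices to prove that $\int_\Om\chi\,(\om_{w_s}^n-\om_{u_{j_s}}^n)\to0$ for every $\chi\in C_0^\infty(\Om)$.

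Fix such a $\chi$. Using $\om_{w_s}-\om_{u_{j_s}}=dd^c(w_s-u_{j_s})$ and the telescoping identity,
\[
	\int_\Om\chi\,(\om_{w_s}^n-\om_{u_{j_s}}^n)=\sum_{k=0}^{n-1}\int_\Om\chi\,dd^c(w_s-u_{j_s})\wed\om_{w_s}^k\wed\om_{u_{j_s}}^{n-1-k}.
\]
Integrating each term by parts (Stokes' theorem on $\Om$, with no boundary contribution since $\supp\chi\Subset\Om$, the curvature terms from $d\om$ being controlled in the same way), and using the pointwise estimate $|dd^c\chi|\le C\om$ on $\supp\chi$ together with $0\le w_s-u_{j_s}=(u-s^{-1}-u_{j_s})^+\le|u-u_{j_s}|$, we reduce the problem to showing
\[
	\int_\Om|u-u_{j_s}|\,\om\wed\om_{w_s}^k\wed\om_{u_{j_s}}^{n-1-k}\longrightarrow0,\qquad k=0,\dots,n-1.
\]

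To estimate these mixed integrals, pass to the shifted potentials $\wh w_s=w_s+g$ and $\wh u_{j_s}=u_{j_s}+g$, which lie in $\cE_0(\Om)$ by Lemma~\ref{oo} and the proof of Lemma~\ref{lem:3convergence}. Since $\ga=dd^cg-\om\ge0$, we have $0\le\om_{w_s}\le dd^c\wh w_s$, $0\le\om_{u_{j_s}}\le dd^c\wh u_{j_s}$, and $\om\le dd^c\rho$ for a fixed smooth strictly psh $\rho$ on a neighbourhood of $\bar\Om$; therefore the $k$-th integral above is at most $\int_\Om|u-u_{j_s}|\,dd^c\rho\wed(dd^c\wh w_s)^k\wed(dd^c\wh u_{j_s})^{n-1-k}$, a weighted integral of a mixed product of closed positive $(1,1)$-currents with locally bounded potentials. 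By the Cauchy--Schwarz (geometric mean) inequality for such products this is dominated by
\[
	\left(\int_\Om|u-u_{j_s}|(dd^c\rho)^n\right)^{\!\frac1n}\left(\int_\Om|u-u_{j_s}|(dd^c\wh w_s)^n\right)^{\!\frac kn}\left(\int_\Om|u-u_{j_s}|(dd^c\wh u_{j_s})^n\right)^{\!\frac{n-1-k}n}.
\]
The first factor is $\le\big(C\int_\Om|u-u_{j_s}|\,dV_{2n}\big)^{1/n}\to0$ since $u_{j_s}\to u$ in $L^1(\Om)$, and the remaining two factors are uniformly bounded by Lemma~\ref{lem:mass1} and Corollary~\ref{cor:mass2} (in fact they tend to $0$: expanding $(dd^c\wh w_s)^n=(\om_{w_s}+\ga)^n$ and bounding each lower-order term via $\ga\le C\,dd^c\rho$ and the same Cauchy--Schwarz inequality reduces it to Lemma~\ref{lem:3convergence}(b), and similarly (c) handles $\wh u_{j_s}$). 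Hence every mixed integral tends to $0$, so $\int_\Om\chi\,(\om_{w_s}^n-\om_{u_{j_s}}^n)\to0$; as $\chi$ was arbitrary, $\om_{u_{j_s}}^n\to\om_u^n$ weakly.

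I expect the only real difficulty to be this last estimate on the mixed integrals: one needs the weighted Cauchy--Schwarz inequality $\int\phi\,T_1\wed\cdots\wed T_n\le\prod_i(\int\phi\,T_i^n)^{1/n}$ for closed positive $(1,1)$-currents $T_i$ and a bounded quasi-continuous weight $\phi\ge0$, which is exactly why it is essential first to replace the non-closed Hermitian forms $\om_{w_s},\om_{u_{j_s}}$ by the genuine Monge--Amp\`ere currents $dd^c\wh w_s,dd^c\wh u_{j_s}$ of the $\cE_0(\Om)$-functions $\wh w_s,\wh u_{j_s}$. The integration by parts and the mass bounds needed above are all supplied by the results already proved in this section.
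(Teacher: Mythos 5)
Your plan is to compare $\om_{u_{j_s}}^n$ with $\om_{w_s}^n$ by expanding the difference telescopically, integrating by parts, and controlling the resulting mixed integrals $\int|u-u_{j_s}|\,T_1\wedge\cdots\wedge T_n$ by a weighted geometric-mean inequality. The fatal step is precisely the one you flag at the end as ``the only real difficulty'': the inequality
\[
\int_\Om\phi\,T_1\wedge\cdots\wedge T_n\ \leq\ \prod_{i=1}^n\Bigl(\int_\Om\phi\,T_i^n\Bigr)^{1/n}
\]
for an arbitrary bounded quasi-continuous weight $\phi\geq 0$ is not a theorem, and it is in fact false. A counterexample in $n=2$: let $\Om$ be the unit ball and set $v_i=\max\{|z_i|^2-1,\,\lambda(|z|^2-1)\}$ for $\lambda>1$ and $i=1,2$; then $v_1,v_2\in\cE_0(\Om)$, and on a small ball $B(0,r)$ one has $v_i=|z_i|^2-1$, hence $(dd^cv_1)^2=(dd^cv_2)^2=0$ there while $dd^cv_1\wedge dd^cv_2>0$ there. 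Taking $\phi$ to be a bump supported in $B(0,r)$ makes the left side positive and both factors on the right zero. The Cegrell inequality \cite[Lemma~5.4]{Ce04} that the paper actually invokes in Lemma~\ref{lem:mass1} and Corollary~\ref{cor:mass2} carries no weight; its weighted cousins require the weight to be $-h$ for some $h\in\cE_0$, which rules out compactly supported bumps and certainly rules out $|u-u_{j_s}|$. The structural problem is that the mixed products $dd^c\wh w_s\wedge dd^c\wh u_{j_s}\wedge\cdots$ can a priori concentrate precisely where $|u-u_{j_s}|$ is large even while $(dd^c\wh w_s)^n$ and $(dd^c\wh u_{j_s})^n$ have negligible mass there; your argument never uses the one fact that rules this out, namely that $w_s$ and $u_{j_s}$ \emph{coincide} on the open set $\{u_{j_s}>u-1/s\}$.

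The paper's proof exploits exactly that fact and sidesteps every integration by parts. By locality of the Monge--Amp\`ere operator, $\mathbf{1}_{\{u_{j_s}>u-1/s\}}\,\om_{w_s}^n=\mathbf{1}_{\{u_{j_s}>u-1/s\}}\,\om_{u_{j_s}}^n$, so for $\eta\in C^\infty_c(\Om)$,
\[
\Bigl|\int\eta\,\om_u^n-\int\eta\,\om_{u_{j_s}}^n\Bigr|
\ \leq\ \Bigl|\int\eta\,\om_u^n-\int\eta\,\om_{w_s}^n\Bigr|
+\Bigl|\int_{\{u_{j_s}\leq u-1/s\}}\eta\,\bigl(\om_{w_s}^n-\om_{u_{j_s}}^n\bigr)\Bigr|.
\]
The first term vanishes because $\om_{w_s}^n\to\om_u^n$ weakly (this part you have). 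The second is handled by Chebyshev: $\int_{\{u_{j_s}\leq u-1/s\}}\om_{u_{j_s}}^n\leq s\int|u-u_{j_s}|\,\om_{u_{j_s}}^n$, and likewise for $\om_{w_s}^n$; choosing $\{j_s\}$ by Lemma~\ref{lem:3convergence} so that $\int|u-u_{j_s}|\,\om_{u_{j_s}}^n+\int|u-u_{j_s}|\,\om_{w_s}^n<1/s^2$ makes both go to zero like $1/s$. No energy inequality and no integration by parts are needed. In addition to the main error above, your integration-by-parts step also glosses over the non-closedness of $\om_{w_s}^k\wedge\om_{u_{j_s}}^{n-1-k}$: the $d\om$ and $d^c\om$ corrections are not controlled merely by $|dd^c\chi|\leq C\om$, and handling them would require further estimates that the proposal does not supply.
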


\begin{proof} By Lemma~\ref{lem:3convergence} we can choose  a subsequence $\{u_{j_s}\} \subset \{u_j\}$ so that $$\int |u-u_{j_s}| (\om + dd^c u_{j_s})^n + \int |u-u_{j_s}|(\om + dd^c w_s)^n < 1/s^2.$$ 
Recall from \eqref{eq:hartogs-s} that $w_s = \max\{u_{j_s}, u-1/s\}$. Then
$$
	{\bf 1}_{\{u_{j_s} > u-1/s\}} (\om+ dd^c w_s)^n = {\bf 1}_{\{u_{j_s} > u-1/s\}} (\om + dd^c u_{j_s})^n.
$$ 
Therefore,  for $\eta \in C^\infty_c(\Om)$,
$$\begin{aligned}
	\left| \int \eta \om_{u}^n - \int \eta \om_{u_{j_s}}^n\right| 
&\leq  \left| \int \eta \om_{u}^n - \int \eta \om_{w_s}^n\right| + \left| \int \eta \om_{w_s}^n - \int \eta \om_{u_{j_s}}^n\right| \\
&\leq \left| \int \eta \om_{u}^n - \int \eta \om_{w_s}^n\right| + \left| \int_{\{u_{j_s} \leq u-1/s \}} \eta \om_{w_s}^n - \eta \om_{u_{j_s}}^n\right|.
\end{aligned}$$
The first term on the right hand side goes to zero as $\om_{w_s}^n \to \om_u^n$. It remains to estimate the second term. Firstly, by the choice of $\{u_{j_s}\} $ at the begining of this proof,
$$\begin{aligned}
	\left|\int_{\{u_{j_s} \leq u-1/s \}} \eta \om_{u_{j_s}}^n \right| 
&\leq \|\eta\|_{L^\infty} \int_{\{u_{j_s} \leq u-1/s \}} \om_{u_{j_s}}^n \\
&\leq s \|\eta\|_{L^\infty} \int |u- u_{j_s}| \om_{u_{j_s}}^n  \leq \frac{1}{s}\|\eta\|_{L^\infty} \to 0 \quad\text{as } s\to +\infty.
\end{aligned}$$
Similarly, 
$$ \begin{aligned}
 \left|\int_{\{u_{j_s} \leq u-1/s\}} \eta \om_{w_s}^n \right| 
 &\leq \|\eta\|_{L^\infty} \left| \int_{\{u_{j_s}\leq  u-1/s\}} \eta \om_{w_s}^n\right|  \\
 &\leq s \|\eta\|_{L^\infty} \int |u- u_{j_s}| \om_{w_s}^n  \to 0 \quad\text{as } s\to +\infty.
\end{aligned}
$$
The last two estimates complete the proof. 
\end{proof}

\begin{proof}[End of proof of Theorem~\ref{thm:bounded-subsolution}]

Now we come back to the sequence defined in \eqref{seq} and its limit $u$.
Let $\{u_{j_s}\}$ be a subsequence of $\{u_j\}$ whose $L^1$-limit and pointwise almost everywhere limit is equal $u$.

Let $0\leq h \leq 1$ be a continuous function with compact support in $ \Om$. Then,  applying the last lemma and \eqref{eq:weak-approximation}, there exists a unique  solution $u\in PSH(\Om, \om) \cap L^\infty(\bar\Om)$ to $$(\om+ dd^cu)^n = h (dd^c v)^n, 
\quad u = \vphi \quad\text{on }\d\Om.$$

By the Radon-Nikodym theorem $d\mu = h d\nu$ for some Borel function $0\leq h \leq 1$. Since $h\in L^1(d\nu)$ and $C_c(\Om)$ is dense in $L^1(d\nu)$, there exists a sequence of continuous functions $0\leq h_k \leq 1 ,$ whose supports are compact in $\Om$,  such that
$$
	\lim_{k \to +\infty} \int |h_k -h| d\nu =0.
$$ 
In particular, $h_k d\nu \to h d\nu$ weakly. Applying the argument above for continuous $h$, we can find $u_k \in PSH(\Om, \om) \cap L^\infty(\Om)$ such that $\lim_{z \to \ze} u_k (z) = \vphi(\ze)$ for every $\ze\in\d\Om$ and
$$ (\om + dd^c u_k)^n = h_k (dd^c v)^n = h_k d\nu.
$$

Define 
$$
	u = (\limsup u_k)^*.
$$
Passing to a subsequence we may assume that $u_k \to u$ in $L^1(\Om)$ and converging $a.e$ to $u$ with respect to the Lebesgue measure. 
Again by Lemma~\ref{cor:weak-convergence} there exists a subsequence $\{u_{k_s}\} $ of  $\{ u_k\}$ such that 
$$
	(\om + dd^c u_{k_s})^n \to (\om+ dd^c u)^n \quad \text{weakly}. 
$$
Hence, 
$$
	(\om + dd^c u)^n = \lim_{k_s \to +\infty} h_{k_s} (dd^c v)^n = h d\nu.
$$
The proof is completed.
\end{proof}


\begin{proof}[Proof of Theorem~\ref{thm:bounded-ss}]

Let us proceed with the proof of the subsolution theorem on $\ov M$ a smooth compact Hermitian manifold with boundary. Consider the following set of functions
\[\label{eq:class-B}
	\cB(\vphi,\mu): = \left\{w \in PSH(M, \om) \cap L^\infty(M): (\om + dd^c w)^n \geq \mu, w^*_{|_{\d M}} \leq \vphi \right\},
\]
where  $w^*(x) = \limsup_{M \ni z\to x} w(z)$ for every $x\in \d M$.
Clearly, $\ul{u} \in \cB(\vphi, \mu)$. Let us solve the linear PDE finding  $h_1 \in C^0(\ov{M}, \bR)$ such that 
\[\label{eq:omega-laplace}\begin{aligned}
	(\om + dd^c h_1) \wed \om^{n-1} =0, \\
	h_1 = \vphi \quad\text{on } \d M.
\end{aligned}
\]
Since $(\om +dd^c w)\wed \om^{n-1} \geq 0$ for $w \in PSH(M, \om)$, the maximum principle for the Laplace operator with respect to $\om$  gives 
$$
	w \leq h_1 \quad \text{for all } w  \in \cB(\vphi, \mu).
$$
Set
\[ \label{eq:supremum}
	u (z)  = \sup_{w \in \cB(\vphi, \mu)} w (z)\quad \text{for every } z\in M.
\]
Then, by Choquet's lemma and the fact that $\cB(\vphi, \mu)$ satisfies the lattice property, $u = u^* \in \cB(\vphi, \mu)$. Again by the definition of $u$, we have   $\ul{u} \leq u \leq h_1$. It follows that 
\[\label{eq:boundary} \lim_{z \to x} u(z) = \vphi(x) \quad \text{for every } x\in \d M.
\]

\begin{lem}[Lift] \label{lem:lift}  Let $v \in \cB(\vphi, \mu)$. Let $B \subset\subset M$ be a small coordinate ball (a chart biholomorphic to a ball in $\bC^n$). Then, there exists $\wt v \in \cB(\vphi, \mu)$ such that $v \leq \wt v$  and $(\om + dd^c \wt v)^n = \mu$ on $B$.
\end{lem}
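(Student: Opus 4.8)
The plan is to perform a \emph{balayage} (lifting) on the ball $B$: solve $(\om+dd^c\cdot)^n=\mu$ inside $B$ with the boundary values of $v$ along $\d B$, and glue the local solution to $v$ on $M\setminus B$. First I would fix holomorphic coordinates identifying a neighbourhood $U\supset\supset\bar B$ with a neighbourhood of a closed Euclidean ball in $\bC^n$ on which $\om$ is a smooth Hermitian form, and — discarding at most countably many radii — arrange that $\mu(\d B)=0$. Choose a smooth strictly plurisubharmonic $\rho$ on $\bar U$ with $dd^c\rho\geq\om$; then $v+\rho$ is bounded plurisubharmonic near $\bar B$ and $(dd^c(v+\rho))^n\geq(\om+dd^cv)^n\geq\mu$ on $B$, so $\mu|_B$ admits a bounded plurisubharmonic subsolution on $B$. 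Applying \cite{Ko95} (the Dirichlet problem in a strictly pseudoconvex domain, with zero boundary data) I would replace it by an honest $\eta\in PSH(B)\cap L^\infty(B)$ with $\eta=0$ on $\d B$ and $(dd^c\eta)^n=\mu$ on $B$; this $\eta$ is the subsolution vanishing on $\d B$ required as input to Theorem~\ref{thm:bounded-subsolution}.

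Next, since $v$ is upper semicontinuous, pick continuous functions $\vphi_k$ on $\d B$ decreasing to $v|_{\d B}$, and apply Theorem~\ref{thm:bounded-subsolution} on $B$ with subsolution $\eta$ and boundary data $\vphi_k$: this yields $\psi_k\in PSH(B,\om)\cap L^\infty(B)$ with $(\om+dd^c\psi_k)^n=\mu$ on $B$ and $\lim_{z\to\zeta}\psi_k(z)=\vphi_k(\zeta)$ for $\zeta\in\d B$. Because $(\om+dd^c\psi_k)^n=\mu\leq(\om+dd^cv)^n$ and $\vphi_k\geq v|_{\d B}\geq\limsup_{z\to\zeta}v(z)$, the comparison principle \cite[Corollary~3.4]{KN1} gives $\psi_k\geq v$ on $B$; the same principle (together with uniqueness in Theorem~\ref{thm:bounded-subsolution}) shows $\{\psi_k\}$ is non-increasing, so $\psi:=\lim_k\psi_k$ is bounded and $\om$-plurisubharmonic on $B$, solves $(\om+dd^c\psi)^n=\mu$ on $B$ by continuity of the Monge--Amp\`ere operator along decreasing sequences (\cite{BT82, DK12}), satisfies $\psi\geq v$ on $B$, and $\limsup_{B\ni z\to\zeta}\psi(z)\leq\inf_k\vphi_k(\zeta)=v(\zeta)$ for $\zeta\in\d B$. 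I would then set $\wt v:=v$ on $M\setminus B$ and $\wt v:=\max\{\psi,v\}=\psi$ on $B$. The last inequality and the upper semicontinuity of $v$ make $\wt v$ $\om$-plurisubharmonic on $M$ (the standard gluing lemma, applied in the chart after adding $\rho$); moreover $v\leq\wt v\leq\max\{v,\psi_1\}$ so $\wt v$ is bounded, $\wt v=v$ on a neighbourhood of $\d M$ so $\wt v^*_{|_{\d M}}=v^*_{|_{\d M}}\leq\vphi$, and $(\om+dd^c\wt v)^n$ equals $\mu$ on $B$, equals $(\om+dd^cv)^n\geq\mu$ on $M\setminus\bar B$, and is $\geq\mu$ on $\d B$ because $\mu(\d B)=0$. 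Hence $\wt v\in\cB(\vphi,\mu)$, $v\leq\wt v$, and $(\om+dd^c\wt v)^n=\mu$ on $B$, as required.

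The step I expect to be the main obstacle is the construction of the local solution $\psi$: since $v$ is only bounded and upper semicontinuous along $\d B$, one cannot solve the Dirichlet problem on $B$ directly with boundary datum $v|_{\d B}$, so one must pass through the continuous approximants $\vphi_k$ and use the comparison principle to force $\psi_k\geq v$ — this inequality, and the companion estimate $\limsup_{B\ni z\to\zeta}\psi(z)\leq v(\zeta)$, are precisely what guarantee that the pasted function $\wt v$ is $\om$-plurisubharmonic across $\d B$ and lies above $v$. A subsidiary technical point, easily dealt with, is that Theorem~\ref{thm:bounded-subsolution} demands a subsolution vanishing on $\d B$, whereas $v+\rho$ does not; this is what forces the preliminary reduction via \cite{Ko95}.
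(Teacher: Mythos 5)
Your overall architecture — solve locally with decreasing continuous boundary approximants, use the comparison principle to force the local solution above $v$, glue with the usual lemma — agrees with the paper, and the gluing step, the choice of $\vphi_k \searrow v|_{\d B}$, and the observation $\mu(\d B)=0$ for generic radii are all fine. The problem is the very first reduction, where you claim to produce $\eta \in PSH(B) \cap L^\infty(B)$ with $\eta = 0$ on $\d B$ and $(dd^c\eta)^n = \mu$ on $B$ by ``applying Ko95.'' The bounded subsolution theorem of \cite{Ko95} (of which Theorem~\ref{thm:bounded-subsolution} is the Hermitian generalization) requires a bounded psh subsolution \emph{whose boundary values match the prescribed boundary data} — for the zero Dirichlet problem, a subsolution in $\cE_0(B)$. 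What you have is $v+\rho$, which is bounded psh near $\ov B$ but has no control on its boundary values along $\d B$; it is not in $\cE_0(B)$, so \cite{Ko95} does not apply, and the step you flagged as a ``subsidiary technical point, easily dealt with'' is in fact exactly the obstruction you have not removed. Trying to patch it by the elementary max construction $\max\{v+\rho-C,\, A(|z|^2-R^2)\}$ also fails: to make the max equal $v+\rho-C$ on a set where $\mu$ lives while vanishing on $\d B$, you need that set compactly contained in $B$.

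This is precisely why the paper introduces a smaller ball $B'\subset\subset B$ and first solves for ${\bf 1}_{B'}\mu$ rather than $\mu$: with $\ov{B'}$ compact in $B$, the max construction above \emph{does} produce a subsolution in $\cE_0(B)$ for ${\bf 1}_{B'}\mu$, so Theorem~\ref{thm:bounded-subsolution} applies and yields $w$ with $(\om+dd^cw)^n = {\bf 1}_{B'}\mu$, $\limsup_{z\to x}w(z)\le v(x)$ on $\d B$, and $w\ge v$. The glued function $\wt w$ then satisfies $(\om+dd^c\wt w)^n\ge\mu$ only on $(M\setminus B)\cup B'$, so one still needs the final step — an exhaustion $B_j\nearrow B$ producing a decreasing sequence $\wt w_j$, whose limit $\wt v$ does the job on all of $B$. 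Your proof omits both the restriction to $B'$ and the exhaustion step, and without them the existence of the local solution (and hence of the lift) is not established.
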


\begin{proof} We implicitly identify $B' \subset \subset B$ with a  small ball in $\bC^n$, with $B'$ also fixed. Note that on $B$ we have $ {\bf 1}_{B'} d\mu \leq (\om + dd^c v)^n$. Since $\ov{B'}$ is compact in $B$,  we can easily get a bounded plurisubharmonic  subsolution with zero boundary value on $\d B$ for ${\bf 1}_{B'} d\mu$. Let $\phi_j \searrow v$ on $\d B$ be a uniformly bounded sequence of continuous  functions.  Theorem~\ref{thm:bounded-subsolution}  gives the existence of solutions:
$$\begin{cases}	
	v_j\in PSH(B, \om) \cap L^\infty(B), \\
	(\om + dd^c v_j)^n = {\bf 1}_{B'}d\mu, \\
	\lim_{z \to x \in \d B} v_j(z) = \phi_j (x),\quad \forall x\in \d B.	 	
\end{cases}
$$
By the comparison principle $v_j$ is a decreasing sequence as $j\to +\infty$ and $v_j \geq v$ on $B$. Set $w = \lim_j v_j$. Then, $w \in PSH(B, \om)$ and $w \geq v$. By the convergence theorem $(\om + dd^c w)^n = {\bf 1}_{B'}d\mu$. 
Note that $\lim_j v_j(x) = v(x)$ for every $x\in \d B$. Hence,
$
	\limsup_{z\to x} w (z) \leq \phi_j(x).
$
Let $j\to +\infty$ we get that $$\limsup_{z\to x} w (z) \leq v(x)$$ for every $x \in \d B$

Now we define 
$$ \wt w = \begin{cases}
	\max\{w, v\} \quad\text{on } B, \\
	v \quad\text{on } M \setminus B.
\end{cases}
$$
Then, $\wt w \in PSH(M, \om ) \cap L^\infty(M)$ and satisfies $\wt w^* \leq \vphi$ on $\d M$. Moreover, 
$$
	(\om + dd^c \wt w)^n \geq d\mu \quad\text{on } (M\setminus B) \cup B'.
$$

Finally, let $B_j \nearrow B$  be a sequence of open balls increasing to $B$, then by the above construction we get a decreasing sequence $\wt w_j \in PSH(M, \om) \cap L^\infty(M)$ such that  $\wt w_j^* \leq \vphi$ on $\d M$  and
$$
	(\om + dd^c w_j)^n \geq d\mu \quad \quad\text{on } (M\setminus B) \cup B_j.
$$
Set $\wt v = \lim w_j \geq v$. Then, $\wt v$ is the required lift function.
\end{proof}

\noindent
{\em End of Proof of the bounded subsolution theorem.}  By \eqref{eq:boundary} it remains to show that the function $u$ above satisfies $(\om +dd^c u)^n = \mu$. Let $B \subset\subset M$ be a small coordinate ball. It is enough to check $(\om + dd^c u)^n = \mu$ on $B$. Let $\wt u$ be the lift of $u$ as in Lemma~\ref{lem:lift}. It follows that $\wt u \geq u$ and $(\om + dd^c \wt u)^n = \mu$ on $B$. However, by the definition $\wt u \leq u$ on $M$.  Thus, $\wt u = u$ on $B$, in particular on $B$ we have $(\om + dd^c \wt u)^n  = (\om +dd^c u)^n = \mu$.
\end{proof}

\section{Continuity of the solution}
\label{sec:continuous-ss}

First  we recall some facts in pluripotential theory for a Hermitian background form.
The  proofs from \cite{KN1, KN4, KN2}  can be easily adapted to our setting. 

Let ${\bf B}>0$ be a constant such that on $\ov M$ we have
$$	-\rB \om^2 \leq 2n dd^c \om \leq \rB \om^2, \quad -\rB \om^3 \leq 4n^2 d\om \wed d^c \om \leq \rB \om^3.
$$
Then we have the following version (see  \cite[Theorem~0.2]{KN1}) of the classical comparison principle \cite{BT82}.

\begin{lem}\label{lem:mCP} 
Let $u, v \in PSH(M,\omega)\cap L^\infty(\ov M)$ be such that $u, v \leq 0$.  Assume $\liminf_{z\to \d M} (u-v)(z) \geq 0$ and $-s_0= \sup_{M}
(v-u)>0$. Fix $0 < \te < 1$ and set
$m(\te) = \inf_M [ u - (1-\te) v]$. 
Then for any $0 < s < \min\{\frac{\te^3}{16 \rB}, |s_0| \}$,
\[
	\int_{ \{ u< (1-\te) v+ m(\te) + s\}}
		\omega_{(1-\te) v }^n
	\leq \left(1 +  \frac{s \rB}{\te^n} \, C\right)
	 \int_{ \{ u < (1-\te) v + m(\te) + s\}}
		\omega_{u}^n,
\]
where $C$ is a uniform constant depending only on $n$.
\end{lem}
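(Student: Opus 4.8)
The plan is to adapt the proof of the modified comparison principle on closed Hermitian manifolds, \cite[Theorem~0.2]{KN1}, to the present setting; the only genuinely new ingredient concerns the behaviour near $\d M$. Write $\phi := (1-\te)v$, which is again $\om$-psh since $\om_\phi = \om + dd^c\phi = \te\om + (1-\te)\om_v \ge \te\om \ge 0$, and set
$$
	U(s) := \bigl\{\, u < \phi + m(\te) + s\,\bigr\}.
$$
First I would check that $U(s)$ is relatively compact in $M$ when $0 < s < |s_0|$: since $v \le 0$ we have $m(\te) = \inf_M(u - \phi) \le \inf_M(u - v) = -|s_0|$, so $m(\te) + s < 0$, and combining this with $\liminf_{z\to\d M}(u - v)(z) \ge 0$ keeps $U(s)$ at a positive distance from $\d M$. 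Granting this, every integration by parts below may be carried out exactly as on a closed manifold, with no boundary contribution.

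Next I would introduce the envelope $w := \max\{u,\, \phi + m(\te) + s\}\in PSH(M,\om)\cap L^\infty(\ov M)$. By construction $w = u$ on a neighbourhood of $\d M$, $w = \phi + m(\te) + s$ on $U(s)$ (so $\om_w = \om_\phi$ there), $\om_w^n = \om_u^n$ on $M\setminus\ov{U(s)}$, and $0 \le w - u \le s$. Telescoping gives $\om_w^n - \om_u^n = dd^c(w-u)\wed T$ with $T := \sum_{k=0}^{n-1}\om_w^k\wed\om_u^{n-1-k}$; integrating over $M$ and transferring $dd^c$ onto $T$ (legitimate because $w - u$ is compactly supported in $M$) yields
$$
	\int_{U(s)}\om_{\phi}^n - \int_{U(s)}\om_u^n = \int_M (w - u)\, dd^c T .
$$
Since $d\om_w = d\om_u = d\om$ and $dd^c\om_w = dd^c\om_u = dd^c\om$, expanding $dd^c T$ leaves only terms $dd^c\om\wed\om_w^i\wed\om_u^j$ with $i+j = n-2$ and $d\om\wed d^c\om\wed\om_w^i\wed\om_u^j$ with $i+j = n-3$, which by the definition of $\rB$ are dominated in absolute value by $C_n\rB$ times $\om^2\wed(\om + \om_w + \om_u)^{n-2}$.

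To close the estimate I would use $0 \le w - u \le s$ together with $\supp(w-u)\subset\ov{U(s)}$ and the fact that on $\ov{U(s)}$ one has $\om_w = \om_\phi \ge \te\om$, hence $\om \le \te^{-1}\om_w$; this turns the right-hand side into $s\,C_n\rB\,\te^{-n}$ times a finite combination of mixed masses $\int_{\ov{U(s)}}\om_w^p\wed\om_u^{n-p}$ with $p\ge 2$. A localization/comparison-principle argument over $U(s)$ — the same bootstrap as in \cite[Theorem~0.2]{KN1}, and the place where the restriction $s < \te^3/(16\rB)$ enters — bounds each such mass by $C_n\int_{U(s)}\om_u^n$, which gives the asserted inequality with $C = C(n)$. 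I expect this last step to be the main obstacle: keeping the constant dependent only on $n$, and not on $\|u\|_\infty$ or $\|v\|_\infty$, forces one to keep everything localized to $\ov{U(s)}$ and to iterate the comparison principle, exactly as on a closed manifold; the remaining bookkeeping (the Newton expansion and the $\te^{-n}$ accounting) is routine. If one prefers, the whole argument can first be reduced to $u,v$ continuous on $\ov M$ by a decreasing regularization, but the Bedford--Taylor/Cegrell machinery invoked above is available directly for bounded $\om$-psh functions.
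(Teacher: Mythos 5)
The student's strategy is the correct one and is essentially the approach described in the paper: reduce to the compact-Hermitian-manifold modified comparison principle of \cite[Theorem~0.2]{KN1} after checking that $U(\te,s)$ is relatively compact in $M$ (so all integrations by parts have no boundary term), then apply the telescoping/Newton expansion, use $d\om_u=d\om_w=d\om$, bound the error terms by $\rB$, and replace $\om$ by $\te^{-1}\om_\phi$ on $\ov{U(\te,s)}$. The paper's proof is simply to invoke \cite[Theorem~0.2]{KN1} once the relative compactness of $U(\te,s)$ is observed, and the student reproduces the internal mechanics of that theorem rather than citing it; the outline is sound, though the final "bootstrap" step that bounds the mixed masses $\int_{U(s)}\om_w^p\wedge\om_u^{n-p}$ by $C_n\int_{U(s)}\om_u^n$ is not actually worked out, only asserted to go "as in \cite[Theorem~0.2]{KN1}" — that step is where the threshold $s<\te^3/(16\rB)$ is used, and a complete argument must carry it out (it is an iterative comparison-principle estimate, not routine bookkeeping). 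Since the student in the end defers to exactly the same reference the paper does, I would regard this as the same approach with more of the black box opened, but with the hardest step still delegated.

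One small imprecision worth flagging: when you write $m(\te)\le\inf_M(u-v)=-|s_0|$ you are implicitly using $v\le 0$ to get $\phi=(1-\te)v\ge v$, so $u-\phi\le u-v$; that is fine, but then you should also note that the set $U(s)$ is nonempty (needed for the inequality to carry content) because $s<|s_0|$ guarantees $m(\te)+s<\inf(u-\phi)+|s_0|$ fails to be vacuous — i.e.\ the infimum is attained or approached in the interior, precisely because of the boundary condition $\liminf_{z\to\d M}(u-v)\ge 0$. This is the same compactness observation you make, just stated so it covers nonemptiness as well as relative compactness.
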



\begin{cor}[Domination Principle] \label{cor:DP} Let $u, v \in PSH(M, \om) \cap L^\infty(\ov M)$ be such that  $\lim\inf_{z \to \d M} (u-v)(z) \geq 0$. Suppose that $u \geq v$ almost everywhere with respect to $\om_u^n$. Then $u \geq v$ in $M$.
\end{cor}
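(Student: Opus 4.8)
The plan is to derive this from the modified comparison principle, Lemma~\ref{lem:mCP}, by a now-standard argument adapted to the Hermitian setting. First I would reduce to the case $u, v \leq 0$: adding a large negative constant to both $u$ and $v$ does not change any hypothesis (the measure $\om_u^n$ is unchanged, the boundary condition $\liminf_{z\to\d M}(u-v)\geq 0$ is unchanged, and $u\geq v$ a.e.\ $\om_u^n$ is unchanged), so we may assume $u,v\leq 0$ on $\ov M$. Next, argue by contradiction: suppose $u\geq v$ fails somewhere in $M$, i.e.\ $-s_0:=\sup_M(v-u)>0$. The strategy is to show that this forces $\om_u^n$ to put positive mass on the set $\{u<v\}$, contradicting the hypothesis that $u\geq v$ almost everywhere with respect to $\om_u^n$.

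The key step is to apply Lemma~\ref{lem:mCP} with a small parameter $\te\in(0,1)$. For such $\te$ and for $s>0$ small enough (less than $\min\{\te^3/(16\rB),|s_0|\}$), the lemma gives
\[
\int_{\{u<(1-\te)v+m(\te)+s\}}\om_{(1-\te)v}^n
\leq \Bigl(1+\tfrac{s\rB}{\te^n}C\Bigr)\int_{\{u<(1-\te)v+m(\te)+s\}}\om_u^n.
\]
Now I would observe that the sublevel set $U_{\te,s}:=\{u<(1-\te)v+m(\te)+s\}$ is nonempty: since $m(\te)=\inf_M[u-(1-\te)v]$, there are points where $u-(1-\te)v$ is arbitrarily close to $m(\te)$, hence below $m(\te)+s$. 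Moreover the left-hand integral is bounded below by a positive quantity independent of $\te$ and $s$ in a suitable range — one uses that $\om_{(1-\te)v}^n=( \om+(1-\te)dd^c v)^n\geq(1-\te)^n(\om+dd^c v)^n=(1-\te)^n\om_v^n$ (using $0<1-\te<1$ and $\om\geq(1-\te)\om$), combined with the fact that $\om_v^n(U_{\te,s})$ is bounded away from $0$. Actually the cleanest route is: choose $s$ first so that $\om_u^n(U_{\te,s})$ is controlled, then let $\te\to 0$; one checks that in the limit $U_{\te,s}$ shrinks into $\{u\le v+(\text{small})\}$, while the left-hand side stays positive, forcing $\om_u^n(\{u<v\})>0$.

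The main obstacle I anticipate is making the limiting argument $\te\to 0$, $s\to 0$ rigorous: one must choose these parameters in the right order and track that the sublevel sets $U_{\te,s}$ do not degenerate (stay nonempty with controlled measure on the left) while their ``overlap'' with $\{u\ge v\}$ becomes negligible on the right. Concretely, fix a small $\de>0$; pick $\te$ small so that $m(\te)$ is close to $m(0)=\inf_M(u-v)\le s_0<0$, then pick $s$ small; the set $U_{\te,s}$ is contained in a neighborhood of the contact set where $u$ nearly achieves $\inf(u-(1-\te)v)$, and on this set $u<v+\de$ for $\de$ small. Since $\om_u^n(\{u<v\})=0$ by hypothesis and $\om_u^n$ is a finite Radon measure, $\om_u^n(\{u<v+\de\})\to 0$ as $\de\to 0$, so the right-hand side $\to 0$; but the left-hand side is $\geq c>0$ uniformly, a contradiction. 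Hence $-s_0$ cannot be negative, i.e.\ $u\geq v$ on $M$, which is the assertion. This is exactly the scheme by which the domination principle is deduced from the comparison principle in the references \cite{KN1, KN2}, and the only modifications needed are the harmless sign normalization and keeping the constant $\rB$ in place of the Kähler ``$0$''.
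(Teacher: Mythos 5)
Your overall strategy---deriving the domination principle by contradiction from the modified comparison principle Lemma~\ref{lem:mCP}---is sound and is the natural route here; the paper itself simply refers to \cite{Ng16} and \cite{LPT20} rather than giving a proof, so a reconstruction along these lines is exactly what is called for. Two details, however, need to be repaired.

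First, the lower bound you give for the left-hand side does not hold up. You invoke $\om_{(1-\te)v}^n\ge(1-\te)^n\om_v^n$ together with ``$\om_v^n(U_{\te,s})$ bounded away from $0$,'' but nothing in the hypotheses guarantees that $\om_v^n$ places any mass at all on $U(\te,s)$; for instance $v$ could be $\om$-maximal, with $\om_v^n\equiv 0$. The usable observation is instead $\om_{(1-\te)v}=\te\om+(1-\te)\om_v\ge\te\om$, hence $\om_{(1-\te)v}^n\ge\te^n\om^n$. Combined with the fact that $U(\te,s)$ has \emph{positive Lebesgue measure}---if it had measure zero then $u\ge(1-\te)v+m(\te)+s$ would hold a.e.\ and therefore everywhere, by the sub-mean-value property of $\om$-psh functions, contradicting $m(\te)=\inf_M\bigl[u-(1-\te)v\bigr]$---this yields $\int_{U(\te,s)}\om_{(1-\te)v}^n\ge\te^n\int_{U(\te,s)}\om^n>0$.

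Second, the limiting scheme you describe ($\te\to 0$, $\de\to 0$) is both superfluous and incorrect as phrased. It is incorrect because the left-hand lower bound $\te^n\int_{U(\te,s)}\om^n$ degenerates as $\te\to 0$, so there is no uniform $c>0$; and the convergence $\om_u^n(\{u<v+\de\})\to\om_u^n(\{u<v\})$ is false---the monotone limit of the sets $\{u<v+\de\}$ as $\de\searrow 0$ is $\{u\le v\}$, whose $\om_u^n$-measure can well be positive. It is superfluous because the needed inclusion $U(\te,s)\subset\{u<v\}$ already holds for fixed small parameters: since $m(\te)\le s_0$ and $(1-\te)v\le v+\te\|v\|_\infty$ (using $v\le 0$), one has
$U(\te,s)\subset\{u<v+\te\|v\|_\infty+s_0+s\}$,
and this is contained in $\{u<v\}$ as soon as $\te\|v\|_\infty+s<|s_0|$. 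So fix $\te$ with $\te\|v\|_\infty<|s_0|$, then choose $s<\min\{\te^3/(16\rB),\,|s_0|-\te\|v\|_\infty\}$; Lemma~\ref{lem:mCP} and the hypothesis $\om_u^n(\{u<v\})=0$ then give a right-hand side equal to $0$ against a strictly positive left-hand side---a contradiction, and no passage to the limit is required.
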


\begin{proof} See \cite[Lemma~2.3]{Ng16} and \cite[Proposition~2.2]{LPT20}.
\end{proof}

The next one is a generalization of \cite[Theorem~5.3]{KN1} to Hermitian manifolds with boundary.

\begin{thm}
\label{thm:kappa}
Fix $ 0 < \te <1$. Let $ u, v \in PSH (M, \om)\cap L^\infty(\ov M)$ 
be such that $u \leq 0$ and $-1\leq  v \leq 0$. Assume that $\liminf_{z\to \d M} (u-v)(z) \geq 0$ and $-s_0= \sup_{M}(v-u)>0$.  Denote by
$m(\te) = \inf_{M} [ u - (1-\te) v]$, and put
$$
\te_0:=	\frac{1}{3}\min\left\{
	\te^n, 
	\frac{\te^3}{16 \rB}, 
	4 (1-\te) \te^n, 
	4 (1-\te)\frac{\te^3}{16 \rB}, |s_0| \right\}.
$$
Suppose that 
$\omega_u ^n \in \cF(M,h)$. Then, for $0<t< \te_0$,
\[\label{eq:kappa}
	t\leq \kappa\left[ cap_\omega ( U(\te, t))\right],
\]
where $U(\te, t ) = \{ u < (1- \te) v+ m(\te) + t \}$, 
and the function $\kappa $ is defined on the interval $(0,1)$ by the formula
\[\label{eq:kappa2}
	\kappa \left ( s^{-n} \right) 	=	4\, C_n  \left \{
			\frac{1}{ \left [ h ( s )\right]^{\frac{1}{n}} } 
			+ \int_{s}^\infty  \frac{dx}{x \left[ h (x) \right]^{\frac{1}{n}}}			
					\right \},
\]
with a dimensional constant $C_n$.
\end{thm}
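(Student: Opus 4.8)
The plan is to follow the strategy of \cite[Theorem~5.3]{KN1}, adapting it to the boundary setting using the comparison principle (Lemma~\ref{lem:mCP}) already established here. First I would observe that the sublevel sets $U(\te,t)$ are ``stable under small perturbation'': by the very definition of $m(\te)$, for $0<t<\te_0$ the set $U(\te,t)$ is nonempty, and if $s<t$ then $U(\te,s)\subset U(\te,t)$. The boundary hypothesis $\liminf_{z\to\d M}(u-v)(z)\ge 0$ together with $-s_0=\sup_M(v-u)>0$ guarantees that the minimum defining $m(\te)$ is attained in the interior and that $U(\te,t)\Subset M$ for $t$ small, so all the capacity and Monge--Amp\`ere computations take place away from $\d M$ and no genuinely new boundary phenomenon appears beyond what Lemma~\ref{lem:mCP} already handles.

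Next I would run the standard bootstrap. Set $b(t)=cap_\om(U(\te,t))$. On one hand, comparing $u$ with the competitor $(1-\te)v+m(\te)+t$ and using Lemma~\ref{lem:mCP}, one gets that on $U(\te,t)$ the mass $\int_{U(\te,t)}\om_{(1-\te)v}^n$ is comparable to $\int_{U(\te,t)}\om_u^n$, and since $v\in PSH(M,\om)$ with $-1\le v\le 0$ one has $\om_{(1-\te)v}^n\ge \te^n\,cap_\om$-type lower bounds on the relevant set; more precisely $\om_{(1-\te)v}^n \ge (1-\te)^n(\om+dd^c v)^n$ and a convexity/telescoping argument gives $b(t-s)\,\te^n \lesssim \int_{U(\te,t)}\om_{(1-\te)v}^n$ for $s$ a definite fraction of $t$. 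On the other hand, by the hypothesis $\om_u^n\in\cF(M,h)$ and \eqref{eq:dominate}, $\int_{U(\te,t)}\om_u^n \le \mu(U(\te,t)) \le F_h(cap_\om(U(\te,t))) = F_h(b(t))$. Chaining these inequalities yields a differential-type inequality of the shape
\[
	t\, b(t)^{1/n}\;\le\; C_n\,\frac{b(t)}{[h(b(t)^{-1/n})]^{1/n}}\quad\Longleftrightarrow\quad t\le C_n\,\frac{b(t)^{1-1/n}}{\cdots},
\]
which, after the usual manipulation (cf. the proof of the Ko\l odziej-type a priori estimate), integrates to the bound \eqref{eq:kappa} with $\kappa$ given by \eqref{eq:kappa2}; the precise constants $\te_0$ and the two summands in $\kappa(s^{-n})$ come from splitting the iteration into the ``first step'' term $[h(s)]^{-1/n}$ and the ``tail'' integral $\int_s^\infty \frac{dx}{x[h(x)]^{1/n}}$.

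The technical heart is getting the lower bound $\int_{U(\te,t)}\om_{(1-\te)v}^n \gtrsim \te^n\, cap_\om(U(\te,s))$ with the correct dependence on $\te$ and with $s$ a fixed fraction of $t$; this is where the error term $1+sC\rB/\te^n$ from Lemma~\ref{lem:mCP} must be kept small, which is exactly why $\te_0$ is defined with the factors $\te^3/(16\rB)$ and $4(1-\te)\te^3/(16\rB)$ — one needs $s<\te^3/(16\rB)$ both for $u$ versus $(1-\te)v$ and for $(1-\te)v$ versus $(1-\frac{\te}{2})\cdot$ (a further rescaling), so that the perturbed comparison inequalities can be iterated without the constants blowing up. The rest is the bookkeeping familiar from \cite{Ko98, KN1}: verifying monotonicity of $b$, that $U(\te,t)$ is compactly contained so $cap_\om$ behaves well, and that the admissibility condition \eqref{eq:admissible} makes the tail integral finite so that $\kappa$ is a genuine function on $(0,1)$ with $\kappa(0^+)=0$. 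I expect the main obstacle to be precisely tracking these $\te$- and $\rB$-dependencies through the iteration so that the final $\kappa$ matches \eqref{eq:kappa2}, rather than any conceptual difficulty — the Hermitian-with-boundary case differs from the known compact case only through Lemma~\ref{lem:mCP}, which has been arranged to absorb the boundary terms.
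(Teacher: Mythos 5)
The paper gives no proof of Theorem~\ref{thm:kappa}; it is stated as a direct generalization of \cite[Theorem~5.3]{KN1} to the boundary setting, with Lemma~\ref{lem:mCP} (the modified comparison principle with Hermitian error term) being the only genuinely new ingredient. Your proposal follows exactly that plan, and your observations that $U(\te,t)\Subset M$ by the boundary hypothesis, and that admissibility of $h$ makes the tail integral in \eqref{eq:kappa2} converge, are correct and necessary. So at the level of strategy this matches the paper.

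One intermediate step in your sketch is misstated in a way worth correcting. The capacity lower bound does \emph{not} come from the pointwise inequality $\om_{(1-\te)v}^n\geq(1-\te)^n\om_v^n$, which relates two Monge--Amp\`ere measures but has nothing to do with $cap_\om$, and the factor in front of $b(t-s)$ is $s^n$, not $\te^n$. The correct mechanism is the perturbation by a capacity competitor: for $w\in PSH(M,\om)$ with $0\leq w\leq 1$ and $0<s<t$, one checks the inclusions $U(\te,t-s)\subset\{u<(1-\te)v+s(w-1)+m(\te)+t\}\subset U(\te,t)$, and since $\om+dd^c\big((1-\te)v+s(w-1)\big)=(\te-s)\om+(1-\te)\om_v+s\,\om_w\geq s\,\om_w$ when $s\leq\te$, taking the $n$-th power and integrating gives $s^n\int_{U(\te,t-s)}\om_w^n$ on the left. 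Applying Lemma~\ref{lem:mCP} (to $u$ against the perturbed competitor, which requires $s<\te^3/(16\rB)$ to keep the error factor $1+sC\rB/\te^n$ below $2$ --- this is where the somewhat baroque definition of $\te_0$ comes from) and then the hypothesis $\om_u^n\in\cF(M,h)$ yields $s^n\,cap_\om\big(U(\te,t-s)\big)\leq 2F_h\big(cap_\om(U(\te,t))\big)$ after taking supremum over $w$. This is the recursion one iterates à la Ko\l odziej to reach \eqref{eq:kappa}; the two summands in \eqref{eq:kappa2} arise exactly as you say, from the first step and the telescoped tail. The powers of $\te$ only enter through the Hermitian error control and the admissible range of $s$, never as a prefactor of the capacity.
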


We use it to  to generalize the stability estimate  for manifolds with boundary. Let $\hbar(s) $ be the inverse function of $ \ka(s)$ and 
\[\label{eq:Ga}	
\Ga(s) \text{  the inverse function of } s^{n(n+2)+1} \hbar (s^{n+2}).
\]
Notice that
$	\Ga(s) \rightarrow 0 \quad \text{as } s \rightarrow 0^+.
$

\begin{prop}[Stability of solutions]\label{prop:L1-stability} Let $u, v \in PSH(M, \om) \cap L^\infty(\ov M)$ be such that $u, v \leq 0$. Let $\mu \in \cF(M,h)$. Assume that  $\liminf_{z\to \d M} (u-v)(z) \geq 0$ and 
$$
	(\om +dd^c u)^n = \mu .
$$
Then, there exists a constant $C>0$  depending only on $\om$ and $\|v\|_\infty $ such that
\[\notag
	\sup_M(v-u) \leq C\; \Ga\left(\left\|(v-u)_+\right\|_{L^1(d\mu)} \right).
\]
\end{prop}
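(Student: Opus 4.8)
The plan is to follow the well-established template for deriving an $L^1$-stability estimate from a capacity estimate of the type provided by Theorem~\ref{thm:kappa}, carefully tracking how the boundary hypothesis $\liminf_{z\to\d M}(u-v)(z)\ge 0$ enters. First I would reduce to the situation in which $\|v\|_\infty \le 1$ (after rescaling $v$ by a constant depending only on $\|v\|_\infty$, which is absorbed into $C$, possibly at the cost of replacing $h$ by an admissible function of the form $A_2 h(A_1 x)$). Set $s_0 = \sup_M(v-u)$; if $s_0 \le 0$ there is nothing to prove, so assume $s_0 > 0$. Pick the parameter $\te = \te(s_0)$ — the standard choice is $\te$ comparable to a small power of $s_0$, e.g. $\te \sim s_0^{1/(n+2)}$ or similar — so that the threshold $\te_0$ in Theorem~\ref{thm:kappa} is itself comparable to a fixed power of $s_0$; then choosing $t$ just below $\te_0$ and applying \eqref{eq:kappa} yields a lower bound $t \le \kappa[\,cap_\om(U(\te,t))\,]$, hence a lower bound for $cap_\om(U(\te,t))$ in terms of $\hbar$, the inverse of $\kappa$.

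The second step is to convert this capacity lower bound into an $L^1(d\mu)$ lower bound for $(v-u)_+$. On the sublevel set $U(\te,t) = \{u < (1-\te)v + m(\te) + t\}$ one has, after a routine manipulation using $-1\le v\le 0$ and the definition of $m(\te)=\inf_M[u-(1-\te)v]$, a pointwise lower bound $v - u \ge c\,\te$ (or $\ge c\,t$, depending on the bookkeeping) on a further sublevel set of comparable capacity, so that
\[\notag
	\int_M (v-u)_+\, d\mu \;\ge\; c\,\te \cdot \mu\big(U'(\te,t)\big).
\]
Now I would use the comparison-principle input: since $\om_u^n = \mu$, the comparison principle (Lemma~\ref{lem:mCP}, or rather its consequence) bounds $\mu(U(\te,t)) = \int_{U(\te,t)}\om_u^n$ from below by a constant multiple of $\int_{U(\te,t)}\om_{(1-\te)v}^n \ge (1-\te)^n\, cap_\om$-type quantity, or more directly one estimates $cap_\om(U(\te,t))$ from below by a multiple of $\mu(U(\te,t))$ using that $\frac{1}{\te^n}\big((1-\te)v + m(\te) + t - u\big)$-type function is an admissible competitor in the definition of $cap_\om$. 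Combining, $\mu(U(\te,t))$ is bounded below by a power of $cap_\om(U(\te,t))$, which is bounded below by $\hbar$ evaluated at a power of $t\sim$ a power of $s_0$. Chaining all the inequalities, $\|(v-u)_+\|_{L^1(d\mu)}$ is bounded below by $s_0^{n(n+2)+1}\hbar(s_0^{n+2})$ up to a constant, and inverting this function — which is exactly $\Ga$ by the definition \eqref{eq:Ga} — gives $s_0 \le C\,\Ga(\|(v-u)_+\|_{L^1(d\mu)})$, as desired.

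The technical heart, and the step I expect to be the main obstacle, is making the two parameters $\te$ and $t$ compatible with the geometry near $\d M$: because there is no pseudoconvexity or sign condition at the boundary, one must ensure that the relevant sublevel sets $U(\te,t)$ stay away from $\d M$ (so that the comparison principle and the integration-by-parts manipulations apply on a genuinely interior region), which is precisely where the hypothesis $\liminf_{z\to\d M}(u-v)(z)\ge 0$ is used — it guarantees that for $t,\te$ small, $U(\te,t)\Subset M$. A secondary difficulty is keeping careful track of how the constant ${\bf B}$ (the torsion bound) propagates through the chain, since $\om$ is only Hermitian: the constant $C$ in the conclusion must depend only on $\om$ (through ${\bf B}$ and the diameter/volume) and on $\|v\|_\infty$, not on $u$ or on $\mu$ beyond its membership in $\cF(M,h)$. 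Once the parameter choice is fixed, the remaining estimates are the same Cegrell-type manipulations already used to prove Theorem~\ref{thm:kappa}, so no new ideas beyond bookkeeping are needed there.
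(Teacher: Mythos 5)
Your overall skeleton is the right one — reduce to $-1\le v\le 0$, choose $\theta$ comparable to a power of $\sigma:=\sup(v-u)$, invoke Theorem~\ref{thm:kappa} to get $cap_\om(U(\theta,t))\ge\hbar(t)$, convert that into a lower bound on $\|(v-u)_+\|_{L^1(d\mu)}$, and invert to read off $\Gamma$ — and you correctly identify that $\liminf_{z\to\d M}(u-v)\ge 0$ is what keeps the sublevel sets $U(\theta,t)$ compactly contained in $M$ so that Lemma~\ref{lem:mCP} and Theorem~\ref{thm:kappa} can be applied. The paper itself only says the proof is ``identical to \cite[Proposition~2.4]{KN4}'', so you are reconstructing what is omitted, and the beginning and end of your chain are correct.

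The gap is in your ``comparison-principle input''. Neither of the two mechanisms you suggest for lower-bounding $\mu(U(\theta,t))$ is sound. First, $\int_{U}\om_{(1-\theta)v}^n\ge\theta^n\int_U\om^n$ gives a \emph{volume} lower bound, not a capacity one — volume does not dominate capacity in the needed direction. Second, ``estimating $cap_\om(U(\theta,t))$ from below by a multiple of $\mu(U(\theta,t))$'' is the \emph{wrong direction}: Theorem~\ref{thm:kappa} already gives a lower bound on $cap_\om(U)$, so what is missing is a lower bound on $\mu(U)=\int_U\om_u^n$ in terms of $cap_\om$, not the other way around. Moreover the candidate function you propose, $\frac{1}{\theta^n}((1-\theta)v+m(\theta)+t-u)$, is a difference of $\om$-psh functions and is not an admissible competitor in the capacity. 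The correct step, which is the technical heart, is: take an arbitrary competitor $w\in PSH(M,\om)$ with $0\le w\le1$, form $(1-\theta)v+tw$ (which is $\om$-psh when $\theta+t\le1$), note that $\om_{(1-\theta)v+tw}=(1-\theta-t)\om+(1-\theta)\om_v+t\om_w\ge t\,\om_w$ so $\om_{(1-\theta)v+tw}^n\ge t^n\om_w^n$, and apply the comparison inequality on $\{u<(1-\theta)v+tw+m(\theta)+s\}$ (a set squeezed between $U(\theta,s)$ and $U(\theta,s+t)$ since $0\le tw\le t$). Taking the supremum over $w$ yields $t^n\,cap_\om(U(\theta,s))\le C\,\mu(U(\theta,s+t))$, which, combined with $cap_\om(U(\theta,s))\ge\hbar(s)$ and the pointwise bound $v-u>\sigma-\theta-(s+t)$ on $U(\theta,s+t)$, produces the lower bound on $\|(v-u)_+\|_{L^1(d\mu)}$ that inverts to $\Gamma$. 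Without this competitor construction the chain does not close.
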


\begin{proof} It is identical to the one in \cite[Proposition~2.4]{KN4}.
\end{proof}

Now we are going to prove
Corollary~\ref{cor:cont-ss}.  So we assume the existence of a subsolution for the measure $\mu$ and the boundary data $\vphi$ as in  Theorem~
\ref{thm:bounded-ss} and  on top of that suppose $\mu \in \cF(M,h)$ for an admissible function $h$. Then we claim that the solution is continuous.

We argue by contradiction. Suppose $u$ were not continuous. Then the discontinuity of $u$ would occur at an interior point of $M$. Hence
$$ 	d = \sup_{\ov M} (u - u_*) >0,
$$
where $u_*(z) = \lim_{\epsilon \to 0} \inf_{w\in B(z, \epsilon)} u(w)$ is the lower-semicontinuous regularization of $u$. Consider the closed nonempty set
$$ F= \{u- u_* = d\} \subset\subset M.
$$
The inclusion follows from the boundary condition. One can extend the boundary data $\vphi$ to a continuous function on 
$\ov{M}$. Then one can use the boundary condition again and the compactness of $\d M$ to choose $M'\subset\subset M'' \subset\subset M$ with $\ov{M'}$ so close to $\ov M$ (in the sense of the distance induced by the metric $\om$) that
$$
|u-\vphi |<d/4     \text{  on  } M\setminus M' .
$$
By the approximation property of quasi-plurisubharmonic functions  \cite{BK07}, one can find a sequence 
\[\label{eq:approximant-v}	
PSH(M'',\om )  \cap C^\infty(M'') \ni u_j \searrow  u \quad \mbox{ in } M''.
\]
By the Hartogs lemma there exists  $j_0>0$ such that for $j>j_0$
$$
u_j \leq \vphi +d/2    \text{  on  } M''\setminus M' .$$
Then the sets $\{ u< u_j -d/4 \}$ are nonempty and relatively compact in $M'$ for $j>j_0$. Moreover, by subtracting a uniform constant we may assume that $$-C_0 \leq u, u_j \leq 0 \quad \text{on } M.$$

Note that for a Borel set  $E\subset M'$,  $cap_\om(E, M) \leq cap_\om(E, M')$. It follows that $\om_u^n \in \cF(M',h)$. Now we apply Theorem~\ref{thm:kappa} for $u$ and $v:=u_j$ on $M'$  to get a contradiction. In fact, $- m_j=\sup_{M'} (u_j -u) \geq d/4$ for $j>j_0$. Let $0<\veps<d/ 12( 1+A_0)$ and take $\te_0$ 
from Theorem~\ref{thm:kappa}.
For $0< s< \te_0$ we have
$$\begin{aligned}
	U_j(\veps, s) 
	&:= \left\{ u< (1-\veps) u_j + \inf_{M'} (u- (1-\veps) u_j)\right\} \\
	&\subset \left\{ u < u_j +m_j + \veps\|u_j\|_\infty + s \right\}  \\
	& \subset \{u< u_j\},
\end{aligned}$$
where for the last inclusion we used the fact $\|u_j\| \leq A_0$ and $0<s \leq -m_j/3$. Fix $0< s< \te_0$. Then that theorem gives 
$$
	s \leq \ka \left[cap_\om\left(U(\veps, s) \right) \right] \leq \ka[cap_\om (u< u_j)].
$$
This leads to a contradiction since $cap_\om (u< u_j) \to 0$ as $j\to +\infty$.

\begin{remark}\label{rmk:admissible} We only need to assume that for any $M'\subset \subset M$, there exists an admissible function $h'$ (it may depend on $M'$) such that  $\mu \in \cF(M', h')$. Then, the solution $u$ is continuous on $\ov M$. Therefore, if $\mu$ is locally dominated by Monge-Amp\`ere of H\"older continuous plurisubharmonic functions, then we have for a fixed compact set $M' \subset\subset M$, $\mu \in \cF(M', h)$ for some admissible function $h$. This is a simple consequence of a result due to Dinh-Nguyen-Sibony \cite{DNS} (Remark~\ref{rmk:moderate-interior}).  More generally, if the modulus of continuity of $\ul u$ satisfies a Dini-type condition then we also get the same conclusion (see \cite{KN5}).
\end{remark}

We have an immediate consequence of this remark.
\begin{cor}\label{cor:continuity-dns} The bounded solution obtained in Theorem~\ref{thm:bounded-ss} is continuous if the subsolution $\ul u$ is H\"older continuous.
\end{cor}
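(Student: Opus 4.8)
The plan is to reduce Corollary~\ref{cor:continuity-dns} to Remark~\ref{rmk:admissible}, which already records two facts: that a measure locally dominated by the Monge-Amp\`ere masses of H\"older continuous plurisubharmonic functions satisfies $\mu\in\cF(M',h')$ for every $M'\subset\subset M$ (with an admissible $h'$ allowed to depend on $M'$), and that such a local domination forces the solution of Theorem~\ref{thm:bounded-ss} to be continuous on $\ov M$. So the only thing left to check is that a H\"older continuous subsolution $\ul u\in PSH(M,\om)\cap C^{0,\al}(\ov M)$ makes $\mu$ locally dominated in exactly this sense.

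To produce the local H\"older continuous psh comparison functions I would fix $p\in M$, a holomorphic coordinate ball $B\subset\subset M$ around it, and a constant $A>0$ large enough that $A\,dd^c|z|^2\geq\om$ on $B$. With $\rho(z):=A|z|^2$ and $w:=\ul u+\rho$, the function $w$ is plurisubharmonic on $B$ since $dd^cw=(\om+dd^c\ul u)+(dd^c\rho-\om)\geq0$, and it is H\"older continuous on $B$ as the sum of a $C^{0,\al}$ function and a smooth one. Writing $S=\om+dd^c\ul u\geq0$ and $\beta=dd^c\rho-\om\geq0$, the Newton expansion $(dd^cw)^n=(S+\beta)^n=\sum_{k=0}^n\binom{n}{k}S^k\wed\beta^{n-k}$ consists of nonnegative terms, so $(dd^cw)^n\geq S^n=(\om+dd^c\ul u)^n\geq\mu$ on $B$, the last inequality being the subsolution condition from Definition~\ref{def:ss}(a). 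Hence on each such ball $\mu$ is dominated by $(dd^cw)^n$ with $w$ a H\"older continuous psh function, which is precisely the required local domination.

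Finally I would invoke Remark~\ref{rmk:admissible}: through the Dinh-Nguyen-Sibony moderateness estimate \cite{DNS} the local domination above yields, for every $M'\subset\subset M$, an admissible $h'$ with $\mu\in\cF(M',h')$, and then the contradiction argument used for Corollary~\ref{cor:cont-ss} (in the form pointed out in that remark) gives the continuity of $u$ on $\ov M$. There is no genuine obstacle at the level of this corollary; the one delicate point is internal to Remark~\ref{rmk:admissible}, namely converting the local statement of \cite{DNS}, phrased with the Bedford-Taylor capacity of $B$, into the bound $\mu\in\cF(M',h')$ phrased with $cap_\om(\cdot,M')$, which rests on the standard comparison of these two capacities on sets compactly contained in $M'$ and has already been taken care of there.
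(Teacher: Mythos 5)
Your proposal is correct and takes essentially the same route as the paper: it reduces to Remark~\ref{rmk:admissible} and then supplies the (standard) local domination by producing a H\"older continuous psh function $w=\ul u+\rho$ with $dd^c\rho\geq\om$ and $(dd^cw)^n\geq(\om+dd^c\ul u)^n\geq\mu$, which is precisely the construction the paper itself uses explicitly later in the proof of Lemma~\ref{lem:moderate}. You simply spell out the small step the paper leaves implicit when it says ``immediate consequence.''
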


\section{Estimates on boundary charts}
\label{sec:boundary-chart}

For the proof of the subsolution theorem in the H\"older continuous class we need rather delicate estimates close to the boundary of $\ov M$. In this section we prove estimates in local boundary charts of $\ov M$. 
The estimates on  interior charts are easier and they are only mentioned as remarks after the corresponding estimates for  boundary charts. 

Let $q \in \d M$ be a point on the boundary. Let $\Om$ be a boundary chart centered at $q$. Fix $\rho$ a defining function for $\d M$ in $\Om$. Identifying this chart with a subset of $\bC^n$, we can take as $\Om$  the coordinate "half-ball"  of radius $2R>0$ centered at $q$. More precisely let $B_r \subset \bC^n$ for $r>0$ denote the ball of radius $r$ centered at $0$. Let $\rho: B_{2R} \to \bR$ be a smooth function, with $\rho(0) =0$
 and $d\rho \neq 0$ along $\{\rho = 0\}$. Then, 
$$
	\Om = \{ z \in B_{2R}: \rho(z) < 0\}, \quad \d M \cap \Om = \{z: B_{2R}:\rho(z) =0\} .
$$ 
 The Hermitian form $\om$ is smooth up to the boundary $\d M$,  so we can extend it to a smooth Hermitian form on $\ov{B_{2R}}$.
 Multiplying  $\rho$ by a small positive constant we assume also that
$$
	\om + dd^c \rho >0 \quad\text{and}\quad -1 \leq \rho \leq 0 \quad\text{on } \ov{B_{2R}}.
$$
 Moreover,  the estimates are local, so we can fix a K\"ahler form $\Te = dd^c g_0$, where $g_0= C (|z|^2-(3R)^2)$ and $C>0$ is large enough so that
$$
	\Te \geq \om \quad\text{on } \ov{B_{2R}}.
$$

Suppose that 
\[\label{eq:holder-psh-subsolution}
\psi \in PSH(\Om) \cap C^{0,\al}(\ov\Om) \quad\text{and}\quad \psi\leq 0,\]
where $0<\al \leq 1$.
Our goal is to get the stability estimates for $\om$-psh functions with respect to the Monge-Amp\`ere measure of $\psi$. This will provide the stability estimates on boundary charts under the  assumption that a  H\"older continuous subsolution exists.

Let us recall the following class of domains defined in \cite[Definition~3.2]{CKZ11}: A domain $D \subset \bC^n$ is called  \em quasi-hyperconvex \rm if $D$ admits a continuous negative $\om$-psh exhaustion function $\rho: D \to [-1,0)$.

\medskip

We now give several estimates on the quasi-hyperconvex domain 
$$\Om_R =  \Om \cap B_R.$$The first observation is that the continuous exhaustion function for $\Om_R$,
\[\label{eq:rho-R}
	\rho_R = \max\{\rho, |z|^2 - R^2\} \in PSH(\Om_R,\om)
\]
has finite Monge-Amp\`ere mass on $\Om_R$, i.e.,  
$$	\int_{\Om_R} (\Te + dd^c \rho_R)^n \leq C_0.
$$
In fact, it follows from smoothness of $\rho$ and $|z|^2$ on $B_{2R}$ and the Chern-Levin-Nirenberg inequality (see e.g. \cite[page 8]{K05}) that $\rho_R$ has finite Monge-Amp\`ere mass. More generally,  for $1\leq k\leq n$,
\[\label{eq:CKZ-cor3.5}
	\int_{\Om_R} (\Te+ dd^c \rho_R)^k \wed \Te^{n-k} \leq   C_0 \| g_0+ \rho_R \|_{L^\infty(B_{2R})}^{k}.
\]

Let us consider a class corresponding to  the Cegrell class $\cE_0$ in Section~\ref{sec:cegrell}.
\[\label{eq:E0} 
\cP_0 (\Te) = \left\{ v\in PSH(\Om_R, \Te): v\leq 0, \lim_{z\to \d\Om_R} v(z) =0, \int_{\Om_R} \Te_v^n <+\infty \right\},
\]
where $PSH(\Om_R, \Te)$ denotes the set of all $\Te$-psh functions in $\Om_R$.
We first adopt an inequality due to B\l ocki \cite{Bl93} to our setting.
\begin{lem} \label{lem:Blocki-ineq}
Let $v \in \cP_0 (\Te)$ and $1\leq k \leq n$ an integer. Let $\phi_i \leq 0$, $i=1,...,k$, be plurisubharmonic functions in $\Om_R$. Then
$$
	\int_{\Om_R} (-v)^k dd^c\phi_1 \wed \cdots \wed dd^c \phi_k \wed \Te^{n-k} \leq k! \|\phi_1\|_\infty \cdots \|\phi_k\|_\infty \int_{\Om_R} \Te_v^k \wed \Te^{n-k}.
$$
\end{lem}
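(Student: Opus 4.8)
The plan is to prove Błocki's inequality by induction on $k$, following the classical argument that the case $k$ reduces to the case $k-1$ via one integration by parts. For $k=1$, one writes $\int_{\Om_R}(-v)\,dd^c\phi_1\wed\Te^{n-1}$ and integrates by parts twice; since $v\in\cP_0(\Te)$ vanishes continuously on $\d\Om_R$ and $\phi_1\le 0$, the boundary terms vanish and one gets $\int_{\Om_R}(-\phi_1)\,dd^c v\wed\Te^{n-1}\le\|\phi_1\|_\infty\int_{\Om_R}dd^c v\wed\Te^{n-1}=\|\phi_1\|_\infty\int_{\Om_R}\Te_v\wed\Te^{n-1}$ (using $\Te_v=\Te+dd^c v$ and that $\int\Te\wed\Te^{n-1}$ is added in; more precisely one should bound $dd^c v\wed\Te^{n-1}\le\Te_v\wed\Te^{n-1}$ as positive measures since $\Te\wed\Te^{n-1}\ge 0$). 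The key point throughout is that all the forms involved, namely $\Te$, the $dd^c\phi_i$, and $dd^c v$, are positive currents, so that every partial wedge product is a positive measure, and one may freely drop nonnegative terms.

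For the inductive step, I would start from $\int_{\Om_R}(-v)^k\,dd^c\phi_1\wed\cdots\wed dd^c\phi_k\wed\Te^{n-k}$ and integrate by parts to move one $dd^c$ off $\phi_1$ and onto $(-v)^k$. The identity $dd^c\big((-v)^k\big)=k(k-1)(-v)^{k-2}\,dv\wed d^cv - k(-v)^{k-1}\,dd^cv$ appears, but because $v$ is only bounded psh and not smooth, this computation is justified in the usual pluripotential-theoretic sense (as in Błocki \cite{Bl93} or Cegrell), with $dv\wed d^cv\ge 0$. After integration by parts the boundary terms again vanish using $v|_{\d\Om_R}=0$ and $\phi_i\le 0$, leaving
$$
\int_{\Om_R}(-v)^k\,dd^c\phi_1\wed\cdots\wed dd^c\phi_k\wed\Te^{n-k}
= \int_{\Om_R}(-\phi_1)\,dd^c\big((-v)^k\big)\wed dd^c\phi_2\wed\cdots\wed dd^c\phi_k\wed\Te^{n-k}.
$$
Discarding the nonnegative term $k(k-1)(-v)^{k-2}\,dv\wed d^cv\wed(\text{positive})$, which contributes with the favorable sign after accounting for the $-dd^cv$ versus $+dd^cv$, one bounds the right-hand side by
$$
k\|\phi_1\|_\infty\int_{\Om_R}(-v)^{k-1}\,dd^cv\wed dd^c\phi_2\wed\cdots\wed dd^c\phi_k\wed\Te^{n-k}
\le k\|\phi_1\|_\infty\int_{\Om_R}(-v)^{k-1}\,\Te_v\wed dd^c\phi_2\wed\cdots\wed dd^c\phi_k\wed\Te^{n-k},
$$
again using $(-v)^{k-1}\,\Te\wed(\text{positive})\ge 0$ to replace $dd^cv$ by $\Te_v=\Te+dd^cv$.

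Now one applies the induction hypothesis with $k-1$ in place of $k$, to the $\Te$-psh function $v$ (note $\Te_v\wed\Te^{n-k} = \Te'^{\,n-(k-1)}$-type terms fit the pattern once we regard $\Te_v\wed\Te^{n-k}$ as a closed positive $(n-k+1,n-k+1)$-current — or more cleanly, one reorganizes so that the ambient background form in the $(k-1)$-step is still $\Te$ and the extra factor $\Te_v$ is absorbed into the list of $dd^c\phi_i$'s up to the closedness subtlety). Iterating, each step peels off one $\|\phi_i\|_\infty$ and multiplies the combinatorial constant by the appropriate factor, the product of which telescopes to $k!$, yielding $k!\|\phi_1\|_\infty\cdots\|\phi_k\|_\infty\int_{\Om_R}\Te_v^k\wed\Te^{n-k}$. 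The main obstacle I anticipate is the careful justification of the integration-by-parts and the $dd^c((-v)^k)$ expansion for merely bounded (not smooth) $v$ and possibly unbounded-looking, though here bounded, $\phi_i$: one must either invoke the standard regularization $v_\delta\searrow v$, $\phi_{i,\delta}\searrow\phi_i$ together with the Bedford–Taylor convergence theorems and monotone/dominated convergence for the integrands, or cite \cite{Bl93} directly for the inequality in the local model and transfer it, which is exactly the strategy signalled by the phrase \emph{adopt an inequality due to Błocki}. A secondary technical point is that $\Om_R$ is only quasi-hyperconvex rather than hyperconvex, so the exhaustion function is $\Te$-psh rather than psh; but the defining feature we actually use — that $v$ tends to $0$ at $\d\Om_R$ so boundary terms in Stokes' theorem drop — is exactly the definition of $\cP_0(\Te)$, so this causes no real difficulty.
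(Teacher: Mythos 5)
Your proof sketch captures the computational heart of the argument correctly: the expansion $dd^c\big((-v)^k\big)=k(k-1)(-v)^{k-2}dv\wedge d^cv-k(-v)^{k-1}dd^cv$, the integration by parts to move one $dd^c$ onto the power of $-v$, the sign bookkeeping $-dd^c\big((-v)^k\big)\le k(-v)^{k-1}\Te_v$, and the telescoping of constants to $k!$. But there are two genuine gaps, one structural and one technical, both of which the paper resolves by a device that never appears in your proposal.

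\textbf{The induction on $k$ does not close.} After one integration by parts and the $\Te_v$-bound you arrive at $k\|\phi_1\|_\infty\int(-v)^{k-1}\Te_v\wedge dd^c\phi_2\wedge\cdots\wedge dd^c\phi_k\wedge\Te^{n-k}$. This is \emph{not} an instance of the lemma with $k-1$ in place of $k$: the statement with $k-1$ has background $\Te^{n-k+1}$, but you have $\Te_v\wedge\Te^{n-k}$. Your two suggested repairs both fail. You cannot absorb $\Te_v=dd^c(g_0+v)$ into the list of $dd^c\phi_i$'s because functions in $\cP_0(\Te)$ are not assumed bounded, so $g_0+v$ need not satisfy the hypothesis $\|\phi_i\|_\infty<\infty$; and you cannot promote $\Te_v$ to be the ``ambient'' smooth background form since it is only a closed positive current. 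The paper's proof is not an induction on the lemma but an iteration of the single-step estimate $\int(w-v)^j(dd^c\phi)^j\wedge\Te_v^{k-j}\wedge\Te^{n-k}\le j\|\phi\|_\infty\int(w-v)^{j-1}(dd^c\phi)^{j-1}\wedge\Te_v^{k-j+1}\wedge\Te^{n-k}$, so the $\Te_v$ factors simply accumulate to $\Te_v^k$ over $k$ steps; there is no need to re-invoke the lemma with a different background.

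\textbf{The boundary terms are not handled.} You assert that ``the boundary terms vanish'' because $v\to 0$ on $\d\Om_R$ and $\phi_i\le 0$, but this conflates two separate mechanisms. For $k=1$ the boundary term $\int_{\d\Om_R}\phi_1\,d^cv\wedge\Te^{n-1}$ does not vanish; at best it has a favorable sign (outward normal derivative of $v$ is nonnegative), which would require its own argument, and once $\Te_v$ sits inside the wedge (as it does after the first step) the boundary integral involves a non-smooth current and cannot be invoked via naive Stokes. The regularization $v_\delta\searrow v$ that you mention does not help here, since $v_\delta-v$ still does not have compact support. The paper's proof sidesteps all of this by replacing $(-v)^k$ with $(w_\eps-v)^k$ where $w_\eps=\max\{v,-\eps\}$: then $w_\eps=v$ near $\d\Om_R$, so $(w_\eps-v)^k$ is compactly supported in $\Om_R$, the integration by parts is an identity with \emph{no} boundary terms, and the same truncation turns the final passage to the limit into a clean monotone-convergence argument even if $v$ is unbounded. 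That truncation is the key idea missing from your proposal; without it the argument does not go through as written.
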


\begin{proof} We give the proof in a particular case (which is used below) $\phi_1 =\cdots =\phi_k$. The general case is  similar. Denote by $w_\veps$ the function $ \max\{v, -\veps\}$ for $\veps>0$. Then, $w_\eps \nearrow 0$ as $\veps \searrow 0$. By the Lebesgue convergence theorem 
$$
	\lim_{\veps \to 0} \int_{\Om_R} (w_\veps -v)^k (dd^c \phi)^k\wed\Te^{n-k} = \int_{\Om_R} (-v)^k (dd^c \phi)^k\wed\Te^{n-k}.
$$
Fix small $\veps>0$ and write $w$ instead of $w_\veps$.  Then $v\leq w\in PSH(\Om_R, \Te)$ and $w = v$ in a neighborhood of $\d\Om_R$. We first show that
$$
	\int_{\Om_R} (w-v)^k (dd^c \phi)^k\wed \Te^{n-k} \leq k \|\phi\|_\infty \int_{\Om_R} (w-v)^{k-1} (\Te+dd^cv) \wed (dd^c \phi)^{k-1} \wed \Te^{n-k}.
$$
We have 
$$\begin{aligned}
	-dd^c (w-v)^k 
&= - k(k-1) d(w-v) \wed d^c (w-v) - k (w-v)^{k-1} dd^c (w-v) \\
&=	- k(k-1) d(w-v) \wed d^c (w-v) + k (w-v)^{k-1} (\Te_v	- \Te_w)\\
&\leq k (w-v)^{k-1} \Te_v.
\end{aligned}$$
By integration by parts
$$
	\int_{\Om_R} (w-v)^k (dd^c \phi)^k\wed\Te^{n-k} = \int_{\Om_R} \phi dd^c (w-v)^k \wed (dd^c\phi)^{k-1}\wed \Te^{n-k}.
$$
Since $\phi$ is negative in $\Om_R$, it follows from the previous inequality that
$$\begin{aligned}
	\int_{\Om_R} (-\phi) [- dd^c(w-v)^k] &\wed (dd^c \phi)^{k-1} \wed\Te^{n-k} \\
&\leq k \int_{\Om_R} (-\phi) (w-v)^{k-1} \Te_v \wed (dd^c \phi)^{k-1}\wed \Te^{n-k} \\
&\leq k \|\phi\|_\infty \int_{\Om_R} (w-v)^{k-1} (dd^c\phi)^{k-1}\wed \Te_v \wed\Te^{n-k}.	
\end{aligned}$$
So the claim above follows.

Repeating this process $k$-times we obtain
$$
	\int_{\Om_R} (w-v)^k (dd^c\phi)^k\wed\Te^{n-k} \leq k! \|\phi\|_\infty^k \int_{\Om_R} \Te_v^k \wed \Te^{n-k}.
$$
The lemma follows by letting $\veps\to 0$.
\end{proof}

Thanks to this lemma and the quasi-hyperconvexity of $\Om_R$ (every bounded smooth domain in $\bC^n$ is quasi-hyperconvex) we are able to estimate the  Monge-Amp\`ere mass of a bounded plurisubharmonic function on 
\[\label{eq:D-eps}
	D_\veps (R)= \{ z \in \Om_R: \rho_R(z) < -\veps\},  \ \ \ \    \veps>0 .
\]

\begin{cor} \label{cor:mass-decay} Let $1\leq k \leq n$ be an integer. Let $\phi\in PSH(\Om_R) \cap L^\infty(\Om_R)$ be such that $\phi\leq 0$. Then,
$$
	\int_{D_\veps(R)} (dd^c \phi)^k \wed \Te^{n-k} \leq \frac{C \|\phi\|_\infty^k}{\veps^k},
$$
where $C$ is a uniform constant independent of $\veps$.
\end{cor}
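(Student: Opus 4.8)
The plan is to use the exhaustion function $\rho_R$ of \eqref{eq:rho-R} as a weight which is uniformly bounded below in modulus on $D_\veps(R)$, combined with the B\l ocki-type inequality of Lemma~\ref{lem:Blocki-ineq}.

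First I would check that $\rho_R\in\cP_0(\Te)$: by \eqref{eq:rho-R} it is $\om$-psh, hence $\Te$-psh since $\Te\ge\om$; it is $\le 0$ on $\Om_R$ because both $\rho$ and $|z|^2-R^2$ are; it tends to $0$ at every boundary point of $\Om_R$ (near $\{\rho=0\}$ one has $\rho_R=\rho\to 0$, while near $\{|z|^2=R^2\}\cap\Om$ one has $\rho_R=\max\{\rho,0\}=0$); and it has finite total Monge-Amp\`ere mass by \eqref{eq:CKZ-cor3.5} with $k=n$.

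Next, on $D_\veps(R)=\{\rho_R<-\veps\}$ one has $(-\rho_R/\veps)^k\ge 1$ pointwise, so, $(dd^c\phi)^k\wed\Te^{n-k}$ being a positive measure,
$$
\int_{D_\veps(R)}(dd^c\phi)^k\wed\Te^{n-k}\le\frac{1}{\veps^k}\int_{\Om_R}(-\rho_R)^k(dd^c\phi)^k\wed\Te^{n-k}.
$$
Applying Lemma~\ref{lem:Blocki-ineq} with $v=\rho_R$ and $\phi_1=\cdots=\phi_k=\phi$ bounds the right-hand integral by $k!\,\|\phi\|_\infty^k\int_{\Om_R}(\Te+dd^c\rho_R)^k\wed\Te^{n-k}$, and the latter is $\le C_0\,\|g_0+\rho_R\|_{L^\infty(B_{2R})}^k$ by \eqref{eq:CKZ-cor3.5}, a constant independent of $\veps$ and $\phi$. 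Combining these displays gives the claim with $C=k!\,C_0\,\|g_0+\rho_R\|_{L^\infty(B_{2R})}^k$.

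There is no real obstacle here: the only point requiring care is the verification that $\rho_R\in\cP_0(\Te)$ — in particular its continuous vanishing on both pieces of $\d\Om_R$ — which is immediate from the definition of $\rho_R$, and this is exactly what legitimizes the application of Lemma~\ref{lem:Blocki-ineq} to $v=\rho_R$. For an interior coordinate chart one runs the same argument with $\Om_R$ replaced by a coordinate ball $B_R$ and $\rho_R$ replaced by $|z|^2-R^2$, obtaining the analogous decay estimate.
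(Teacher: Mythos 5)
Your proof is correct and follows essentially the same route as the paper: use $\rho_R$ as a $\cP_0(\Te)$ weight bounded below in modulus on $D_\veps(R)$, invoke Lemma~\ref{lem:Blocki-ineq} with $v=\rho_R$, and close with \eqref{eq:CKZ-cor3.5}. The only (cosmetic) difference is that the paper uses the truncated weight $\max\{\rho_R,-\veps/2\}-\rho_R\ge\veps/2$ on $D_\veps(R)$, i.e.\ the intermediate display in the proof of Lemma~\ref{lem:Blocki-ineq}, whereas you use $-\rho_R\ge\veps$ and apply the lemma exactly as stated — arguably a slightly cleaner citation, but the argument is the same.
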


\begin{proof}  On $D_\veps(R)$ we have $\max\{\rho_R, -\veps/2\} - \rho_R \geq \veps/2$. It follows from Lemma~\ref{lem:Blocki-ineq} that
$$\begin{aligned}
	\int_{D_\veps(R)} (dd^c \phi)^k \wed \Te^{n-k} 
&\leq \left(\frac{2}{\veps}\right)^k \int_{\Om_R} (\max\{\rho_R, -\veps/2\}) - \rho_R)^k (dd^c \phi)^k \wed \Te^{n-k} \\
&\leq \left(\frac{2}{\veps}\right)^k \|\phi\|_\infty^k \int_{\Om_R} \Te_{\rho_R}^k \wed \Te^{n-k}.
\end{aligned}$$
The last integral is bounded thanks to \eqref{eq:CKZ-cor3.5}.
\end{proof}

\begin{remark} \label{rmk:gradient}
We observe that for a bounded plurisubharmonic function
$$
	2dw \wed d^c w = dd^c w^2 - 2 w dd^c w.
$$
Then applying the above corollary to both terms on the right hand side one obtains
\[\label{eq:gradient-bound}
	\int_{D_\veps(R)} dw \wed d^c w \wed (dd^c \phi)^{k-1}\wed \Te^{n-k} \leq \frac{C \|w\|_\infty^2 \|\phi\|_\infty^{k-1}}{\veps^{k}}.
\]
\end{remark}

The H\"older continuity of a function can be detected by checking the speed of convergence of regularizing sequences
which we now define.
Let us use the following the standard smoothing kernel $\wt\chi (z) = \chi(|z|^2)$ in $\bC^n$, where
\[\label{eq:molifier}
\chi(t)=\begin{cases}\frac {c_n}{(1-t)^2}\exp(\frac 1{t-1})&\ {\rm if}\ 0\leq t\leq 1,\\0&\
{\rm if}\ t>1\end{cases}
\]
 with the constant $c_n$ chosen so that
\begin{equation}\label{eq:total integral}
\int_{\mathbb C^n}\chi(\|z\|^2 )\,dV_{2n}(z)=1 ,
\end{equation}
and $dV_{2n}$ denoting the Lebesgue measure in $\mathbb C^n$.

Let $u \in PSH(\Om) \cap L^\infty(\Om)$ and $\de >0$. 
For $z \in \Om_\de:= \{z\in \Om : {\rm dist}(z, \d\Om) > \de\}$ define
\begin{align}
&\label{eq:cov-loc}	u *\chi_\de(z) = \int_{|x|<1} u(z + \de x) \wt\chi(x) dV_{2n}(x), \\
&\label{eq:av-loc}	\cw u_\de (z) = \frac{1}{v_{2n} \de^{n}} \int_{B_\de(z)} u (x) dV_{2n}(x),
\end{align}
where  $B_\de (z) = \{x \in \bC^n: |z-x| < \de\}$. 
Let us denote  by $\sig$ the surface measure  on a sphere $S(z,r)$ and by  $\overline{\sig }_{2n-1}$ the area of the unit sphere. Consider the averages
$$
	\mu_S (u;z, \de) = \frac{1}{\overline{\sig }_{2n-1} \de^{2n-1}} \int_{S(z, \de)} u(x) d\sig_{2n-1},
$$
which is are increasing in $\de$. 
Therefore, for any $z \in \Om_\de$, 
$$\begin{aligned}
	u*\chi_\de(z) - u(z) 
&= 	\overline{\sig}_{2n-1} \int_0^1 [\mu_S (u; z, \de t)-u(z)] \chi(t^2) t^{2n-1} dt \\
&\leq \| \chi\|_{L^\infty}  \int_0^1 [\mu_S (u; z, \de t)-u(z)] t^{2n-1} dt \\
&= \| \chi\|_{L^\infty}\frac{(\cw u_\de  -u)(z)}{2n} .
\end{aligned}$$
On the other hand, since $\chi \geq 1/C$ on $[0,1/2]$, 
$$\begin{aligned}
	\int_0^1 [\mu_S (u; z, \de t)-u(z)] \chi(t^2) t^{2n-1} dt 
&\geq \frac{1}{C} \int_0^{1/2}  [\mu_S (u; z, \de t)-u(z)] t^{2n-1} dt  \\
& = \frac{1}{2^{n}C} \int_0^1 [\mu_S (u; z, \frac{\de s}{2})-u(z)] s^{2n-1}ds \\
&= \frac{1}{2^{n}C}   \frac{(\cw u_{\de/2}  -u)(z)}{2n}.
\end{aligned}$$
We conclude that there exist a uniform constant $C>0$ such that in $\Om_\de$
\begin{align} \label{lem:cov-reg-compare}
	u *\chi_\de - u &\leq  C (\cw u_\de  -u), \\
	\cw u_{\de/2}  -u &\leq C (u *\chi_\de - u) \label{eq:cov-reg2}.
\end{align}

\begin{lem}\label{lem:boundary-L1}  Let $u \in PSH(\Om) \cap L^\infty(\Om)$. 
 For $0< \de < \de_0 \leq R/4$,
$$	\int_{B_{R/2} \cap \Om_{2\de}} (\cw u_\de (z) - u(z)) dV_{2n} \leq C \de^2 \int_{ B_{3R/4}\cap \Om_\de } \De u(z) dV_{2n}.
$$
Consequently, 
$$
	\int_{B_{R/2} \cap \Om_{\de}} (\cw u_\de (z) - u(z)) dV_{2n} \leq C \de.
$$
\end{lem}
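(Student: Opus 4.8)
\emph{Proof plan.} Set $N=2n$ and let $\ov\sig_{2n-1}$ and $v_{2n}$ denote the area of the unit sphere and the volume of the unit ball in $\bC^n\cong\bR^{N}$, so that $\ov\sig_{2n-1}=N v_{2n}$; write $\nu_u=\De u$ for the Riesz (Laplacian) measure of $u$, a nonnegative Radon measure on $\Om$. Since $u$ is plurisubharmonic, hence subharmonic, $\cw u_\de\geq u$, and two classical identities are available for $z\in\Om_{2\de}$: the one already used in the computation preceding the statement of the lemma,
$$
	\cw u_\de(z)-u(z)=N\int_0^1 t^{N-1}\big(\mu_S(u;z,\de t)-u(z)\big)\,dt ,
$$
and the standard formula for the spherical averages of a subharmonic function, $\mu_S(u;z,r)-u(z)=\tfrac1{\ov\sig_{2n-1}}\int_0^r s^{1-N}\,\nu_u(B_s(z))\,ds$, valid for $0<r\leq 2\de$ (no smoothing of $u$ is needed, as these hold for subharmonic functions with $\nu_u$ the Riesz measure).

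The first step is to integrate these over $z\in A:=B_{R/2}\cap\Om_{2\de}$, apply Tonelli (all integrands are $\geq 0$), write $\nu_u(B_s(z))=\int\mathbf 1_{\{|x-z|<s\}}\,d\nu_u(x)$, and interchange so that $d\nu_u$ is the outermost integration. For fixed $x$ the remaining integral is $\int_0^1 t^{N-1}\!\int_0^{\de t}s^{1-N}\,|A\cap B_s(x)|\,ds\,dt$; bounding $|A\cap B_s(x)|\leq|B_s|=v_{2n}s^{N}$ makes it at most $\tfrac{v_{2n}\de^2}{2(N+2)}$, and it vanishes unless $\operatorname{dist}(x,A)<\de$, which — because $\de<R/4$ — forces $|x|<R/2+\de<3R/4$ and $\operatorname{dist}(x,\d\Om)>2\de-\de=\de$, i.e. $x\in B_{3R/4}\cap\Om_\de$. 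Collecting the constants via $\ov\sig_{2n-1}=N v_{2n}$ yields
$$
	\int_{B_{R/2}\cap\Om_{2\de}}\big(\cw u_\de-u\big)\,dV_{2n}\ \leq\ \tfrac1{4(n+1)}\,\de^2\!\int_{B_{3R/4}\cap\Om_\de}\De u\,dV_{2n},
$$
which is the first assertion; the right-hand side is finite because $\ov{B_{3R/4}\cap\Om_\de}\subset\subset\Om$ (or by the next step).

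For the ``consequently'' part I would split $B_{R/2}\cap\Om_\de=(B_{R/2}\cap\Om_{2\de})\cup K_\de$ with $K_\de:=B_{R/2}\cap(\Om_\de\setminus\Om_{2\de})$. On $K_\de$ one has $0\leq\cw u_\de-u\leq 2\|u\|_\infty$, and since $\rho$ is smooth with $d\rho\neq0$ on $\{\rho=0\}$ and $B_{R/2}$ stays away from $\d B_{2R}$, the set $K_\de=\{z\in B_{R/2}:\de<\operatorname{dist}(z,\d\Om)\leq 2\de\}$ lies in a tubular neighbourhood of width $O(\de)$ of $\{\rho=0\}$, so $|K_\de|\leq C\de$ and $\int_{K_\de}(\cw u_\de-u)\leq C\de$. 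On $B_{R/2}\cap\Om_{2\de}$ I would combine the first inequality with $\int_{B_{3R/4}\cap\Om_\de}\De u\,dV_{2n}\leq C\|u\|_\infty/\de$, which follows from Corollary~\ref{cor:mass-decay} with $k=1$ applied to $\phi:=u-\sup_{\Om_R}u\in PSH(\Om_R)\cap L^\infty(\Om_R)$, $\phi\leq0$: indeed $dd^c\phi\wed\Te^{n-1}$ is a dimensional multiple of $\De u\,dV_{2n}$ because $\Te=\mathrm{const}\cdot dd^c|z|^2$, and $B_{3R/4}\cap\Om_\de\subset D_{c\de}(R)$ for a suitable $c>0$: on this set $|z|^2-R^2\leq-7R^2/16$ while, the distance to $\{\rho=0\}$ being comparable to $|\rho|$ near the boundary, also $\rho\leq-c\de$, hence $\rho_R\leq-c\de$. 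This gives $\int_{B_{R/2}\cap\Om_{2\de}}(\cw u_\de-u)\leq\tfrac{\de^2}{4(n+1)}\cdot\tfrac{C\|u\|_\infty}{\de}=C'\de$, and adding the two pieces proves $\int_{B_{R/2}\cap\Om_\de}(\cw u_\de-u)\leq C\de$.

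The only genuinely classical input is the Riesz representation of the sub–mean value defect; the rest is bookkeeping with the nested domains $\Om_\de$, $B_r$, $D_\veps(R)$, the one geometric point being the comparability of $|\rho|$ with $\operatorname{dist}(\cdot,\{\rho=0\})$, used both for $|K_\de|\leq C\de$ and for the inclusion $B_{3R/4}\cap\Om_\de\subset D_{c\de}(R)$. I do not expect a real obstacle; the point to watch is that in the first step $\nu_u$ gets integrated only over $B_{3R/4}\cap\Om_\de$ — which is exactly where the hypothesis $\de<\de_0\leq R/4$ enters — so that the right-hand side stays finite and localized with all constants independent of $\de$.
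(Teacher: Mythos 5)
The proposal is correct and follows essentially the same approach as the paper: same decomposition of $B_{R/2}\cap\Om_\de$ into $B_{R/2}\cap\Om_{2\de}$ and the $\de$-to-$2\de$ collar, with the collar handled by the volume bound $O(\de)$ and the interior piece by the first inequality together with Corollary~\ref{cor:mass-decay} (applied with $k=1$ after observing $dd^c\phi\wed\Te^{n-1}$ is a dimensional multiple of $\De u\,dV_{2n}$ and checking $B_{3R/4}\cap\Om_\de\subset D_{c\de}(R)$). The only departure is that where the paper cites the classical Jensen-type inequality of \cite[Lemma~4.3]{GKZ08} for the first display, you rederive it via the Riesz representation of the sub-mean-value defect, with the bookkeeping showing the Riesz measure is only probed on $B_{3R/4}\cap\Om_\de$ — this is a correct filling-in of the cited step, not a different method.
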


\begin{proof} The first inequality follows from the classical Jensen formula  (see e.g. \cite[Lemma~4.3]{GKZ08}. For the second one we observe
$$\begin{aligned}
	\int_{B_{R/2} \cap \Om_{\de}} (\cw u_\de (z) - u(z)) dV_{2n} 
&\leq  \int_{B_{R/2} \cap \Om_{2\de}} (\cw u_\de (z) - u(z)) dV_{2n} \\&\quad+ \int_{\Om_\de \setminus  \Om_{2\de}} (\cw u_\de (z) - u(z)) dV_{2n} \\
&\leq  C \de^2 \int_{ B_{3R/4}\cap \Om_\de } \De u(z) dV_{2n} + C \de.
\end{aligned}$$ 
Since $\d \Om$ is a  smooth manifold defined by $\rho$ in $B_{2R}$, there exists a uniform constant $c_0>0$ such that $|\rho(z)| \geq c_0{\rm dist}(z, \d\Om)$ for every $z \in B_{R} \cap \Om$. Hence, there exists a uniform constant $c_1 = c_1(c_0, R)>0$ such that $B_{3R/4} \cap \Om_\de \subset  \{\rho_R < -c_1\de\}$ for every $0<\de < \de_0$. Now,  we apply Corollary~\ref{cor:mass-decay} with $\veps= c_1\de$ and $k=1$ for the first term on the right and the proof is completed.
\end{proof}

\begin{remark} \label{rmk:laplacian-interior-chart} In the interior chart the corresponding (stronger) inequality is given by \cite[Lemma~4.3]{GKZ08}. Let $u \in PSH \cap L^\infty (B_{2R})$. Then,  for $0< \de < R/2$,
\[	\int_{B_R} (\cw u_\de - u) dV_{2n} \leq  c_n \left( \int_{B_{3R/2}} \De u dV_{2n}\right) \de^2.
\]
\end{remark}

We also  need various capacities in the estimates.   Consider  the local  $\Te$-capacity:
$$
	cap_\Te (E,\Om_R) = \sup\left\{\int_E (\Te+ dd^c v)^n:  v \in PSH(\Om_R, \Te), \quad -1\leq v \leq 0\right\}.
$$
This capacity  is equivalent to the Bedford-Taylor capacity.
\begin{lem}\label{lem:cap-equiv} There exists a constant $A_1>0$ such that for every Borel set $E \subset \Om$,
$$
	\frac{1}{A_1} cap (E, \Om_R) \leq cap_\Te(E,\Om_R) \leq A_1 cap(E,\Om_R).
$$
\end{lem}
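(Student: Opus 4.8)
The plan is to prove the two-sided comparison $\frac{1}{A_1}\,cap(E,\Om_R)\le cap_\Te(E,\Om_R)\le A_1\,cap(E,\Om_R)$ by comparing the competitor classes in the two suprema. The key observation is that on $\ov{B_{2R}}$ we have fixed a K\"ahler form $\Te=dd^cg_0$ with $g_0=C(|z|^2-(3R)^2)$ and with $\Te\ge\om$ pointwise; moreover, by construction $g_0$ is smooth and bounded on $\ov{B_{2R}}$, say $-N\le g_0\le -m<0$ there for suitable constants $0<m\le N$. I would first record that a function $v\in PSH(\Om_R,\Te)$ is the same thing (up to normalization) as $v+g_0$ being genuinely plurisubharmonic on $\Om_R$, since $\Te+dd^cv=dd^c(g_0+v)\ge 0$; and similarly $\om$-psh functions are slightly less plurisubharmonic but still comparable. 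This is the mechanism that lets one convert between the $\Te$-capacity, the Bedford--Taylor capacity $cap(\cdot,\Om_R)$ (taken with respect to ordinary psh competitors normalized to $[0,1]$), and — if needed as an intermediate — the $\om$-capacity.

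The main steps would be as follows. \textbf{Upper bound.} Take any $v\in PSH(\Om_R,\Te)$ with $-1\le v\le 0$. Then $w:=(g_0+v)/(2N)+\text{const}$ can be arranged, after an affine normalization using the bounds $-N\le g_0\le -m$ and $-1\le v\le 0$, to satisfy $0\le w\le 1$ and $w\in PSH(\Om_R)$; and $(\Te+dd^cv)^n=(dd^c(g_0+v))^n=(2N)^n(dd^cw)^n$. Hence $\int_E(\Te+dd^cv)^n\le (2N)^n\int_E(dd^cw)^n\le(2N)^n\,cap(E,\Om_R)$, and taking the supremum over $v$ gives $cap_\Te(E,\Om_R)\le A_1\,cap(E,\Om_R)$ with $A_1=(2N)^n$. \textbf{Lower bound.} Conversely, take $w\in PSH(\Om_R)$ with $0\le w\le 1$. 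Then $v:=w-g_0/m$ — or more carefully $v:= \eps w + (g_0-\sup g_0)/(\text{suitable const})$, chosen so that $-1\le v\le 0$ — lies in $PSH(\Om_R,\Te)$, since $\Te+dd^cv=\Te+\eps\,dd^cw+dd^c((g_0-\sup g_0)/c)$ and, for $\eps$ small and $c$ large, the $dd^cg_0$ part dominates so the sum stays $\ge 0$; this is where one uses that $dd^cg_0=\Te$ is strictly positive and uniformly bounded below on $\ov{B_{2R}}$. Then $(\Te+dd^cv)^n\ge \eps^n(dd^cw)^n$ (expanding the Newton binomial and discarding the nonnegative cross terms — all terms $(\eps\,dd^cw)^j\wed(\text{rest})^{n-j}$ are positive currents), so $\eps^n\int_E(dd^cw)^n\le\int_E(\Te+dd^cv)^n\le cap_\Te(E,\Om_R)$; supremum over $w$ yields $cap(E,\Om_R)\le \eps^{-n}\,cap_\Te(E,\Om_R)$. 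Taking $A_1$ to be the larger of the two constants finishes the proof.

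The point requiring care — the ``main obstacle'', though it is more bookkeeping than a genuine difficulty — is getting the affine normalizations exactly right so that the rescaled competitor lands in the correct admissible class ($[0,1]$-valued psh on one side, $[-1,0]$-valued $\Te$-psh on the other) uniformly over the whole domain $\Om_R$; this uses only the explicit two-sided bound on $g_0$ on $\ov{B_{2R}}\supset\ov{\Om_R}$ and the strict positivity $dd^cg_0=\Te>0$. One also wants to check measurability/positivity of all the mixed Monge--Amp\`ere currents that appear, but since all competing functions here are bounded (indeed continuous on the relevant sets after regularization, or treated via Bedford--Taylor theory for bounded psh functions) these wedge products are well defined positive measures and the Newton expansion is legitimate. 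If one prefers, the same argument can be run in two stages via $cap_\om(\cdot,\Om_R)$ as an intermediate, using $\om\le\Te$ one way and a quantitative lower bound $\Te\le A\,\om$ (which holds on the compact $\ov{B_{2R}}$) the other way, but the direct comparison above is cleaner.
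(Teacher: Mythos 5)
Your proposal is correct and essentially the same approach as the paper: the key point in both is that $\Te=dd^c g_0$ for a smooth strictly psh $g_0$ bounded on $\ov{\Om_R}$, which lets you pass $\Te$-psh competitors in $[-1,0]$ to genuine psh competitors in $[0,1]$ (after an affine rescaling) and vice versa. One small simplification for your "lower bound" direction: you do not need any $\eps$-rescaling or Newton expansion at all. If $w\in PSH(\Om_R)$ with $0\le w\le 1$, then $v:=w-1$ already satisfies $-1\le v\le 0$ and is $\Te$-psh (since $\Te+dd^cv=\Te+dd^cw\ge 0$), and $(\Te+dd^cv)^n=(\Te+dd^cw)^n\ge(dd^cw)^n$ because $\Te\ge 0$. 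Hence $cap(E,\Om_R)\le cap_\Te(E,\Om_R)$ directly with constant $1$ — this is presumably what the paper calls "straightforward." Your argument for the other inequality, $cap_\Te(E,\Om_R)\le(2N)^n\,cap(E,\Om_R)$ via $w=(g_0+v+\text{const})/(2N)$, matches the paper's stated reason.
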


\begin{proof} The first inequality is straightforward while the second inequality follows from the fact that $\Te = dd^c g_0 = dd^c [C (|z|^2-(3R)^2]$ on $\ov\Om$ for some $C>0$ large enough depending only on $\om$ and $\Om$. 
\end{proof}

The following estimate of volumes of sublevel sets  (comp. \cite[Lemma~4.1]{K05}) will be crucial for proving  the volume-capacity inequality on a smoothly bounded domain (without any pseudoconvexity assumption).

\begin{lem} 
\label{lem:sublevel-set}    \rm
There exist  constants $C_0, \tau_0>0$  such that for every $v\in \cP_0(\Te)$ with  $a^n :=  \int_{\Om_R} \Te_v^n$  and  $s\geq 0$, 
$$
	V_{2n} (v<-s) \leq C_0 e^{-\tau_0 s /a}.
$$
\end{lem}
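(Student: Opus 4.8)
The strategy is to obtain an exponential volume–capacity type estimate by iterating a localized sub-level-set inequality, in the spirit of Ko\l odziej's original argument (\cite{K05}) but adapted to the quasi-hyperconvex domain $\Om_R$ with the background form $\Te$. Fix $v\in\cP_0(\Te)$ with $a^n=\int_{\Om_R}\Te_v^n$. The first step is to relate the volume of the sub-level set $\{v<-s\}$ to its capacity: since $\frac1a v$ (more precisely a suitable rescaling) competes in the definition of $cap_\Te$, the standard argument gives
$$
	V_{2n}(\{v<-s\}) \leq C\, cap_\Te\big(\{v<-s/2\},\Om_R\big)\cdot(\text{tail factor}),
$$
or more directly one estimates $cap_\Te(\{v<-s\})$ from above using the comparison principle: on $\{v<-s\}$ one has $\Te_w^n\leq(1+t)^n(\text{something})\,\Te_v^n/s^n$ for competitors $w$, hence $cap_\Te(\{v<-s\},\Om_R)\leq a^n/s^n$ by the standard Cegrell/Bedford–Taylor comparison trick. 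Combined with Lemma~\ref{lem:cap-equiv} and the classical volume–capacity inequality $V_{2n}(E)\leq C_0\big(cap(E,\Om_R)\big)$ valid on bounded domains (or rather $V_{2n}(E)\le C\exp(-c/cap(E)^{1/n})$ after the bootstrapping), we want to feed this into an iteration.

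The main mechanism is a recursive inequality for $f(s):=\big(cap_\Te(\{v<-s\},\Om_R)\big)^{1/n}$ or for $g(s):=\big(V_{2n}(\{v<-s\})\big)^{1/2n}$. The key is the following: writing $b(s)=cap_\Te(\{v<-s\},\Om_R)$, one uses a truncation $v_s=\max\{v,-s\}\in\cP_0(\Te)$ and the comparison/Cegrell inequalities (as in Lemma~\ref{lem:Blocki-ineq} and \eqref{eq:CKZ-cor3.5}) to deduce, for $t>0$,
$$
	t^n\, b(s+t) \leq \int_{\{v<-s-t\}}\Te_{v_s}^n \leq \int_{\{v<-s\}}\Te_v^n.
$$
Then one needs a complementary bound showing $\int_{\{v<-s\}}\Te_v^n$ is controlled by $a^n$ times a power of $b(s)$ — this comes from splitting the mass and using $\Te_v^n\leq$ (something)$\cdot cap_\Te$, giving $\int_{\{v<-s\}}\Te_v^n\leq a^n\cdot b(s)$ if $\Te_v^n$ is absolutely continuous w.r.t.\ capacity with the universal constant — but in general $v$ is only bounded, so one instead uses the elementary estimate
$$
	\int_{\{v<-s\}}\Te_v^n \le n!\,\|g_0+\text{const}\|^{?}\, \big(\text{capacity of }\{v<-s\}\big)\cdot\big(\ldots\big),
$$
better: apply Lemma~\ref{lem:Blocki-ineq} with $\phi$ a competitor. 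Concretely, combining $t^n b(s+t)\le a^n b(s)$ does not close; rather the correct recursion (cf.\ \cite[Lemma~4.1]{K05}) is $t\, [b(s+t)]^{1/n} \le C\,[b(s)]^{1/n}\cdot[b(s)]^{1/n}$ giving geometric decay, i.e.\ $[b(s+t)]^{1/n}\le \tfrac12[b(s)]^{1/n}$ once $[b(s)]^{1/n}\le t/(2C)$, and iterating this $\lfloor s/t\rfloor$ times with $t\sim a$ yields $[b(s)]^{1/n}\le 2^{-s/t}$, hence $cap_\Te(\{v<-s\})\le 2^{-ns/Ca}$. Finally convert capacity to volume via Lemma~\ref{lem:cap-equiv} and the elementary bound $V_{2n}(E)\le C_0\, cap(E,\Om_R)$ (valid because $\Om_R$ is bounded and the relative extremal function of $E$ has controlled mass), producing $V_{2n}(\{v<-s\})\le C_0 e^{-\tau_0 s/a}$.

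\textbf{Main obstacle.} The delicate point is establishing the self-improving recursion $t\,[b(s+t)]^{1/n}\le C\,b(s)^{2/n}$ (or the equivalent form) with constants that are \emph{uniform in $v$} and, crucially, scale correctly with $a$. This requires: (i) the finite-mass estimate \eqref{eq:CKZ-cor3.5} for $\rho_R$ to guarantee that competitor functions like $\max\{\rho_R,\cdot\}$ and truncations $v_s$ stay in $\cP_0(\Te)$ with controlled total mass; (ii) a careful use of the comparison principle on the quasi-hyperconvex domain $\Om_R$ — since $\Te$ is K\"ahler here this is close to the classical $\bC^n$ situation, so the comparison principle for $\Te$-psh functions with the right boundary behaviour applies; and (iii) checking that the "error" term from non-pseudoconvexity of $\Om$ is absorbed because we have replaced $\Om$ by the genuinely quasi-hyperconvex $\Om_R$ with the exhaustion $\rho_R$. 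Once the recursion is set up with the correct homogeneity in $a$, the exponential decay and the passage from capacity to volume are routine; the heart of the matter is the mass-localization estimate controlling $\int_{\{v<-s\}}\Te_v^n$ by capacity times $a^n$.
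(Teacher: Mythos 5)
Your plan starts out on the right track—and in fact your very first step, the bound $cap_\Te(\{v<-s\},\Om_R)\leq C a^n/s^n$ obtained from Lemma~\ref{lem:Blocki-ineq} applied with a competitor $\phi=h+g_0$, is exactly the heart of the paper's proof. What the paper then does is immediately plug this \emph{polynomial} capacity decay into the classical exponential volume–capacity inequality
$$V_{2n}(E)\le C_1\exp\Big(\frac{-\tau_1}{[cap(E,\Om_R)]^{1/n}}\Big),$$
cited from \cite[Proposition~3.5]{Ng14}, together with the equivalence $cap\asymp cap_\Te$ from Lemma~\ref{lem:cap-equiv}. This turns $cap_\Te(\{v<-s\})\le Ca^n/s^n$ directly into $V_{2n}(\{v<-s\})\le C_0 e^{-\tau_0 s/a}$ in one line. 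No iteration on $v$ is needed; the ``bootstrapping'' you allude to is already packaged inside the volume–capacity inequality for the fixed bounded domain $\Om_R$, which is independent of $v$.

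The iteration scheme you propose instead has a genuine error. You want a self-improving recursion $t\,[b(s+t)]^{1/n}\le C\,[b(s)]^{2/n}$ for $b(s)=cap_\Te(\{v<-s\},\Om_R)$, which upon iteration would force \emph{exponential} decay of the capacity of sublevel sets, $b(s)\lesssim 2^{-ns/Ca}$. This cannot hold. Take $v$ of the type $a\log|z|$ (regularized to lie in $\cP_0(\Te)$), so that $\Te_v^n$ has mass $a^n$ concentrated near the origin and $\{v<-s\}$ is essentially a coordinate ball of radius $e^{-s/a}$; then $cap_\Te(\{v<-s\})\sim (a/s)^n$, which is only polynomial decay. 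Checking your inequality against this model: with $t=a$ fixed and $s$ large, the left side is $\approx a^2/s$ while the right side is $\approx Ca^2/s^2$, so the claimed recursion fails for $s$ large. The point is that capacity only decays polynomially in $s$; the exponential gain from capacity to volume is what the Alexander–Taylor/Ko\l odziej volume–capacity inequality supplies, and one should invoke it rather than try to re-derive an exponential decay at the capacity level, which is false.

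You also waver between the linear bound $V_{2n}(E)\le C\,cap(E,\Om_R)$ and the exponential one; the linear bound, combined with $cap\le Ca^n/s^n$, only gives polynomial volume decay $V_{2n}(\{v<-s\})\le Ca^n/s^n$, which is insufficient. You must use the exponential volume–capacity inequality. In short: keep your correct first step, discard the iteration, and apply the exponential volume–capacity inequality plus the capacity equivalence; that is precisely the paper's proof.
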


\begin{proof} Since the domain $\Om_R$ is fixed, for any Borel set $E \subset \Om_R$ we will only write $cap_\Te(E) = cap_\Te(E, \Om_R)$ and $cap(E) = cap(E,\Om_R)$ in the proofs.

First we show that there exists a uniform constant $C>0$ independent of $v$ such that for $s\geq 0$,
\[\label{eq:decay-sublevel-set}
	cap_\Te(v<-s) \leq \frac{C a^n}{s^n}.
\]
In fact, let $h \in PSH(\Om_R, \Te)$ be such that $-1 \leq h \leq 0$. 
Note that $\Te = dd^c g_0$ for a strictly smooth plurisubharmonic function $g_0$ in a neighborhood of $\ov{\Om}$ and $g_0\leq 0$.
Then for $\phi = h + g_0$,
$$
	\int_{\Om_R} (-v)^n (\om +dd^ch)^n \leq \int_{\Om_R} (-v)^n (dd^c \phi)^n. 
$$
By Lemma~\ref{lem:Blocki-ineq} it follows that
$$
	\int_{\Om_R} (-v)^n  (dd^c\phi)^n\leq  n! \|\phi\|_\infty^n \int_{\Om_R} \Te_v^n.
$$
Since $\|\phi\|_\infty \leq 1 + \|g_0\|_\infty$,
$$
	\int_{\{v<-s\}} (\Te+ dd^c h)^n 
	\leq \frac{1}{s^n} \int_{\Om_R} (-v)^n (\Te + dd^c h)^n
	\leq \frac{C a^n}{s^n}.
$$
Taking supremum on  the left hand side over $h$ we get the desired inequality. 

Next, denoting $\Om_s:=\{v<-s\} \subset \Om_R$  by an observation in \cite[Proposition~3.5]{Ng14} we know  that
$$
	V_{2n}(\Om_s) \leq C_1 \exp\left( \frac{-\tau_1}{[cap(\Om_s)]^\frac{1}{n}}\right)
$$
for uniform constants $C_1, \tau_1>0$. From  this and  the equivalence between $cap(\bullet)$ and $cap_\Te(\bullet)$ (Lemma~\ref{lem:cap-equiv}) we have
$$
V_{2n}(\Om_s) \leq C_1 \exp\left( \frac{-\tau_1}{[A_1cap_\Te(\Om_s)]^\frac{1}{n}}\right).
$$
Combining this and \eqref{eq:decay-sublevel-set} we get
$V_{2n}(\Om_s) \leq C_1 e^{-\tau_0 s/a}$ with $\tau_0 = \tau_1/(CA_1)^\frac{1}{n}$.
\end{proof}

By comparing the Monge-Amp\`ere measure of a H\"older continuous plurisubharmonic function and  the one of its convolution we show that the estimate of measures of sublevel sets  also holds. 

\begin{prop} \label{prop:H-sublevel-set}Denote by $\mu= (dd^c \psi)^n$, where  $\psi \in PSH(\Om) \cap C^{0,\al}(\ov\Om)$ as in \eqref{eq:holder-psh-subsolution}. Let $\tau_0>0$ be the uniform exponent in Lemma~\ref{lem:sublevel-set}. There exists $\wt\tau_0 =  \wt\tau_0(n, \al, \tau_0)>0$ such that for  $v \in \cP_0(\Te)$ with $a^n = \int_{\Om_R} \Te_v^n \leq 1$ and for every $s>0$, 
$$
	\mu (v < -s ) \leq \frac{C_0 e^{-\wt\tau_0 s /a}}{s^{n+1}} .
$$
\end{prop}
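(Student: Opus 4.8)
The plan is to bound the Monge-Amp\`ere measure $\mu=(dd^c\psi)^n$ of the sublevel set $\{v<-s\}$ by reducing to the volume estimate of Lemma~\ref{lem:sublevel-set} together with the H\"older continuity of $\psi$. The standard device is to compare $\mu$ with a Lebesgue-type quantity via the regularization $\psi_\de=\psi*\chi_\de$ (as in \eqref{eq:cov-loc}). Since $\psi\in C^{0,\al}$ we have $0\le \psi_\de-\psi\le C\de^\al$ on $\Om_R$ (after shrinking slightly), and $(dd^c\psi_\de)^n$ is a smooth measure with a density that blows up like $\de^{-2n}$ at worst but, more usefully, one controls $\int_E(dd^c\psi_\de)^n$ using an integration-by-parts/H\"older-continuity argument. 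Concretely, on any Borel set $E$, one has the inequality $\mu(E)\le$ (smoothed mass of $E^{\de}$-neighbourhood) and then bounds $\int_{E}(dd^c\psi_\de)^n\le C\de^{-2n}V_{2n}(\widehat E)$ where $\widehat E$ is a $\de$-fattening of $E$, OR better, one uses the known fact (from \cite{GKZ08}, \cite{DDGHKZ}, \cite{K05}) that for a H\"older psh function the measure $(dd^c\psi)^n$ of a sublevel set of another bounded function is controlled by a small power of its volume.

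So the key steps I would carry out are: (1) Recall/quote the comparison $(dd^c\psi)^n(\{v<-s\})\le C\,\big[V_{2n}(\{v<-s+1\})\big]^{\gamma}$ for some $\gamma=\gamma(n,\al)>0$; this is the technical heart and follows from comparing $(dd^c\psi)^n$ with $(dd^c\psi_\de)^n$ and using $0\le\psi_\de-\psi\le C\de^\al$ plus the volume-capacity machinery. More precisely, writing $T=\sum_{j=0}^{n-1}(dd^c\psi)^j\wedge(dd^c\psi_\de)^{n-1-j}$ and integrating by parts, $\int_{\{v<-s\}}[(dd^c\psi_\de)^n-(dd^c\psi)^n]=\int(\psi_\de-\psi)\,dd^c(\cdots)\wedge T$, one controls things in terms of $\de^\al$ times a capacity-bounded mass; choosing $\de$ optimally as a power of the volume yields the stated power loss. (2) Apply Lemma~\ref{lem:sublevel-set}: $V_{2n}(\{v<-s+1\})\le C_0 e^{-\tau_0(s-1)/a}\le C_0 e^{\tau_0/a}\,e^{-\tau_0 s/a}$, and since $a\le 1$ the prefactor $e^{\tau_0/a}$ is not uniformly bounded — so one must be slightly careful: instead replace $s$ by $s/2$ inside the volume estimate, using $\{v<-s\}\subset\{v<-s/2\}$ when $s\ge 2$, and treat $0<s<2$ separately by the trivial bound $\mu(v<-s)\le \mu(\Om_R)\le C$ absorbed into the constant (noting $s^{-(n+1)}\ge (2)^{-(n+1)}$ there). (3) Combine: $\mu(\{v<-s\})\le C\big(C_0 e^{-\tau_0 s/(2a)}\big)^\gamma= C_0' e^{-\wt\tau_0 s/a}$ with $\wt\tau_0=\gamma\tau_0/2$; then divide and multiply to insert the factor $s^{-(n+1)}$ by shrinking $\wt\tau_0$ again (since $e^{-\epsilon s}\le C(\epsilon,n) s^{-(n+1)}$ for $s\ge 1$, and $s<1$ handled trivially). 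This produces exactly the claimed form $\mu(v<-s)\le C_0 e^{-\wt\tau_0 s/a}s^{-(n+1)}$ with $\wt\tau_0=\wt\tau_0(n,\al,\tau_0)$.

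The main obstacle I anticipate is step (1): making the comparison between $(dd^c\psi)^n$ and $(dd^c\psi_\de)^n$ rigorous with a clean power loss $\gamma(n,\al)$, because $\psi$ and $v$ live on the boundary chart $\Om_R$ where one must handle the behaviour near $\d\Om$ and ensure all the integration-by-parts steps are justified (the mixed masses $\int_{\Om_R}(dd^c\psi)^j\wedge(dd^c\psi_\de)^{n-1-j}\wedge\Te^{\cdots}$ must be finite and uniformly bounded, which one gets from Corollary~\ref{cor:mass-decay} / \eqref{eq:CKZ-cor3.5} and the Chern-Levin-Nirenberg estimate since $\psi$ is bounded). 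A secondary technical point is the bookkeeping of constants so that $\wt\tau_0$ genuinely depends only on $n,\al,\tau_0$ and not on $v$ — this is guaranteed because $\gamma$ depends only on $n,\al$, the comparison constants depend only on $\Om_R,\Te,\psi$ (fixed data), and the normalization $a\le 1$ keeps the $e^{\tau_0/a}$-type factors in check once we halve $s$. The polynomial factor $s^{-(n+1)}$ is essentially free once the exponential decay is in hand.
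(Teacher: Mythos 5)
Your high-level plan — regularize $\psi$ by convolution, exploit $0\le\psi_\de-\psi\le C\de^\al$, integrate by parts, optimize $\de$, and finally plug in the volume estimate of Lemma~\ref{lem:sublevel-set} — is indeed the strategy of the paper. But the core technical step, your item (1), is exactly where the real content lies, and your proposal is either circular or relies on an unjustified inequality there.

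Specifically, you propose to ``recall/quote the comparison $\mu(\{v<-s\})\le C\,[V_{2n}(\{v<-s+1\})]^{\gamma}$'' from \cite{GKZ08}, \cite{DDGHKZ}, \cite{K05}. Those references establish such measure--volume (equivalently, measure--capacity, via Dinh--Nguyen--Sibony) inequalities in the interior or hyperconvex setting. In the \emph{boundary chart} $\Om_R$, which is only quasi-hyperconvex and not pseudoconvex, the corresponding statement is Theorem~\ref{thm:moderate-boundary} — and that theorem is proved \emph{using} Proposition~\ref{prop:H-sublevel-set}, so quoting it would be circular. You cannot simply appeal to a known black box here: producing this comparison on $\Om_R$ is the Proposition.

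There is also a structural issue with your alternative route. Your ``integration by parts over $\{v<-s\}$'' computation, $\int_{\{v<-s\}}[(dd^c\psi_\de)^n-(dd^c\psi)^n]=\int(\psi_\de-\psi)\,dd^c(\cdots)\wedge T$, is not well-justified because $\{v<-s\}$ is not a region over which Stokes-type manipulations go through cleanly. The paper's key idea, which your sketch lacks, is to replace the set by a \emph{test function}: with $v_s=\max\{v,-s\}$, the function $(v_s-v)$ is supported in $\{v\leq -s\}$, vanishes near $\d\Om_R$, and the paper proves by induction the functional inequality
\[
\int_{\Om_R}(v_s-v)(dd^c\psi)^k\wed\Te^{n-k}\leq\frac{C}{s^k}\Big(\int_{\Om_R}(v_s-v)\,dV_{2n}\Big)^{\al_k},
\]
by splitting $dd^c\psi = dd^c\psi_\eps + dd^c(\psi-\psi_\eps)$, using $dd^c\psi_\eps\leq C\eps^{-2}\Te$ for the first piece, integration by parts (now legitimate because $(v_s-v)$ vanishes at $\d\Om_R$) plus $|\psi_\eps-\psi|\le C\eps^\al$ and the B\l ocki inequality for the second, and optimizing $\eps$. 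The factor $\frac{1}{s^k}$ arises here unavoidably from the observation $(-v)^k\ge s^k$ on $\{v\le-s\}$, which is needed to invoke Lemma~\ref{lem:Blocki-ineq}; the extra factor $\frac{1}{s}$ from $v_{s/2}-v\ge\frac{s}{2}$ then gives the $s^{-(n+1)}$. Thus the polynomial factor is not, as you claim, ``essentially free'' and to be inserted post hoc by shrinking $\wt\tau_0$: that trick requires a clean bound $\mu(\{v<-s\})\le Ce^{-\wt\tau_0' s/a}$ to begin with, which is not established by your argument. Your cruder alternative bound $\int_E(dd^c\psi_\de)^n\le C\de^{-2n}V_{2n}(\widehat E)$ would also lose too much — the paper uses $\de^{-2}$ per inductive step, which after optimizing $\de$ as a small power of the $L^1$-norm yields a positive exponent $\al_k=\al\al_{k-1}/3$; the all-at-once $\de^{-2n}$ loss would destroy this. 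So while your steps (2)–(3) are fine once the functional estimate is in hand, the argument has a genuine gap at step (1).
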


\begin{proof} Let $v_s = \max\{v, -s\}$. Then,  $v_s \in \cP_0(\Te)$, and by \cite[Lemma~3.4]{CKZ11} we also know that $\int_{\Om_R} \Te_{v_s}^n \leq \int_{\Om_R}\Te_v^n = a^n.$
We are going to show that there are uniform constant $0<\al_n  \leq 1$ and $C$ independent of $s$ and $v$ such that
\[\label{eq:holder-functional}
	\int_{\Om_R} (v_s -v) (dd^c \psi)^n \leq \frac{C}{s^n} \left( \int_{\Om_R} (v_s - v) dV_{2n} \right)^{\al_n}.
\]

Suppose this is true for a moment, and let us finish the proof of the proposition. By the inequality $0 \leq v_s - v \leq {\bf 1}_{\{v<-s\}} |v| \leq {\bf 1}_{\{v<-s\}} e^{-\tau v}/\tau$ for every $\tau>0$ (to be determined later), it follows that 
\[\label{eq:vol-cap-c}\begin{aligned}
	\int_{\Om_R} |v_s -v| dV_{2n}
&\leq 	\frac{1}{\tau}\int_{\{v<-s\}} e^{-\tau v} dV_{2n} \\
&\leq 	\frac{1}{\tau}\int_{\Om_R} e^{-\tau v - \tau(v+s)} dV_{2n} \\
&\leq 	\frac{e^{-\tau s}}{\tau} \int_{\Om_R} e^{-2 \tau v} dV_{2n}.
\end{aligned}\]
Using Lemma~\ref{lem:sublevel-set}, for $\tau = \tau_0/(4a)$,  the last integral can be bounded by a uniform constant which is independent of $v$. Moreover, $ v_{s/2} - v \geq \frac{s}{2}$  on  $\{v< -s\} \subset\subset \Omega_R$ for  every $ s> 0$. Therefore,
$$\begin{aligned}
	\mu (v < -s) 
&\leq  \frac{2}{s} \int_{\Om_R} (v_{s/2} -v) (dd^c \psi)^n \\
&\leq  \frac{2C}{s^{n+1}} \left( \int_{\Om_R} (v_{s/2} - v) dV_{2n} \right)^{\al_n},
\end{aligned}$$
where we used \eqref{eq:holder-functional} for the second inequality. Combining this and \eqref{eq:vol-cap-c} we get for every $s>0$, 
$$
	\mu(v<-s) \leq \frac{C}{s^{n+1}} \left(\frac{a e^{-\tau_0 s / (8a)}}{\tau_0} \right)^{\al_n}.
$$
Let $\wt\tau_0 = \tau_0 \al_n/8$. Using $0<a, \al_n \leq 1$, we get
$$	\mu (v<-s) \leq \frac{C e^{-\wt\tau_0s/a}}{s^{n+1}}.
$$
This is the desired estimate.

Now let us prove the promised inequality \eqref{eq:holder-functional}.  For $0 \leq k \leq n$ we write  $S_k := (dd^c\psi)^k \wed \Te^{n-k}$. We show by induction over $k$ that
\[\label{eq:vol-cap-kth} 
	\int_{\Om_R} (v_s -v) (dd^c \psi)^k\wed\Te^{n-k} \leq \frac{C}{s^k} \left( \int_{\Om_R} (v_s - v) dV_{2n} \right)^{\al_k}.
\] 

For $k=0$,  the inequality is obviously true. Suppose it is true for $0\leq k < n$. Denote $T= (dd^c \psi)^k \wed \Te^{n-k-1}$. This means that
$$
	\int_{\Om_R} (v_s -v) T \wed \Te \leq \frac{C}{s^k} \|v_s -v\|_{L^1(\Om_R)}^{\al_k}
$$
for some $0<\al_k \leq 1$.  We need to show that the inequality holds for $k+1 \leq n$ with possibly smaller $\al_{k+1}>0$ and larger $C$. 
Write $\psi_\eps = \psi *\chi_\eps$ on $B_{2R}$  (where we still write $\psi$ for its $\alpha$ - H\"older continuous extension  onto $B_{2R}$) and take $\chi_\eps(z) = \chi(|z|^2/\eps^2)/\eps^{2n}$  the standard smoothing family  defined in \eqref{eq:molifier}.

Firstly, 
\[\label{eq:R-induction-sum}\begin{aligned}
	\int_{\Om_R} (v_s-v) dd^c \psi \wed T 
&\leq \left| \int_{\Om_R} (v_s-v) dd^c \psi_\eps \wed T \right|\\
&+\quad \left|\int_{\Om_R}  (v_s-v) dd^c (\psi_\eps -\psi) \wed T \right| \\
&=:| I_1| + |I_2|. 
\end{aligned}\]
Since $\|\psi\|_\infty \leq 1$,
 $$dd^c \psi_\eps \leq \frac{C \|\psi\|_\infty}{\eps^2} \Te \leq \frac{C \Te}{\eps^2} \quad\text{on } B_{2R}.$$ 
Using this and  the induction hypothesis we get
\[\label{eq:R-i1}
	|I_1| \leq \frac{C \|\psi\|_{\infty}}{\eps^2} \int_{\Om_R} (v_s-v) T \wed \Te \leq \frac{C  \|v_s-v\|_{L^1(\Om_R)}^{\tau_k}}{s^k\eps^2}. 
\]

Next, we estimate $I_2$. By integration by parts we rewrite it as 
$$\begin{aligned}
	I_2 
&=\int_{\Om_R} (\psi_\eps -\psi) dd^c (v_s-v) \wed T \\
&=\int_{\{v\leq -s\}} (\psi_\eps -\psi) (\Te_{v_s}-\Te_v) \wed T \\
&\leq \int_{\{v\leq -s\}} |\psi_\eps -\psi|  (\Te_{v_s}+ \Te_v) \wed T.
\end{aligned}$$
Notice that $v_s - v = 0$ outside  $\{v\leq -s\}$.
By the H\"older continuity of $\psi$ we have $ |\psi_\eps -\psi|  \leq C \eps^\al$. Hence,  
\[\label{eq:R-i2}
	|I_2| \leq C\eps^\al  \int_{\{v\leq -s\}} (\Te_{v_s}+ \Te_v) \wed T.
\]
It follows from Lemma~\ref{lem:Blocki-ineq} that
\[\label{eq:R-i2-a}\begin{aligned}
	\int_{\{v\leq -s\}} \Te_{v_s} \wed T 
&= \int_{\{v\leq -s\}} (dd^c \psi)^k \wed \Te_{v_s} \wed \Te^{n-k-1} \\
&\leq \frac{1}{s^k}\int_{\Om_R} (-v)^k (dd^c \psi)^k \wed \Te_{v_s} \wed \Te^{n-k-1} \\
&\leq \frac{k! \|\psi\|^k_\infty }{s^k} \int_{\Om_R} \Te_v^k \wed \Te_{v_s} \wed \Te^{n-k-1}.
\end{aligned}\]
Similarly,  
\[\label{eq:R-i2-b}
	\int_{\{v\leq -s\}} \Te_{v} \wed T  \leq \frac{k! \|\psi\|^k_\infty }{s^k} \int_{\Om_R} \Te_v^{k+1} \wed \Te^{n-k-1}.
\]
Using  \cite[Corollary~3.5-(3)]{CKZ11} we obtain
\[\label{eq:R-i2-c} \begin{aligned}
 \int_{\Om_R} \Te_v^k &\wed \Te_{v_s} \wed \Te^{n-k-1} \\
&\leq  2^{n-1} \left(  k \int_{\Om_R} \Te_{v_s}^n +  \int_{\Om_R} \Te_{v}^n + (n-k-1)\int_{\Om_R} \Te^n\right) \\
& \leq 2^{n-1} n(a + C_0),
\end{aligned}\]
where $C_0 = \int_{\Om_R} \Te^n$. 
Similarly 
\[ \label{eq:R-i2-d}\int_{\Om_R} \Te_v^{k+1} \wed \Te^{n-k-1} \leq 2^{n-1}n(a + C_0).\]
Combining \eqref{eq:R-i2-a}, \eqref{eq:R-i2-b}, \eqref{eq:R-i2-c} \eqref{eq:R-i2-d} and the assumption $0<a \leq 1$ we get that $$|I_2| \leq \frac{C \eps^\al}{s^k}.$$ 

It follows from the estimates for $I_1$ and $I_2$  that
$$\begin{aligned}
&	\int_{\Om_R} (v_s -v) (dd^c\psi)^{k+1} \wed \Te^{n-k-1}  \\
&=   \int_{\Om_R} (v_s -v ) dd^c \psi \wed T \\
&\leq |I_1| + |I_2| 
\leq \frac{C \|v_s-v\|_{L^1(\Om_R)}^{\al_k}}{s^k\eps^2} + \frac{C \eps^\al}{s^k}.
\end{aligned}
$$
Finally,  we can choose $$\eps = \|v_s-v\|_{L^1(\Om_R)}^{\al_k/3}>0, \quad \al_{k+1} = \al \al_k/3$$ (otherwise $v_s=v$ and the inequality is obvious). Then
$$\begin{aligned}
	\int_{\Om_R} (v_s -v) S_{k+1} 
&= \int_{\Om_R} (v_s -v) (dd^c\psi)^{k+1} \wed \Te^{n-k-1} \\
&\leq \frac{C}{s^k} \|v_s -v\|_{L^1(\Om_R)}^{\al_{k+1}}.
\end{aligned}$$
The proof of the step $(k+1)$ is finished, and so is the proof of  the proposition.
\end{proof}

We state a  volume-capacity  inequality between the Monge-Amp\`ere measure of a H\"older continuous plurisubharmonic function and the Bedford-Taylor capacity on  quasi-hyperconvex domains, where  subsets are of $\veps$-distance  from the boundary.

\begin{thm} \label{thm:moderate-boundary} Let $\psi \in PSH(\Om) \cap C^{0,\al}(\ov\Om)$ as in \eqref{eq:holder-psh-subsolution}. Suppose $0<\veps<R/4$. Then there exist  uniform constants $C, \tau_0>0$ such that for every compact set $K\subset D_\veps (R) \cap B_{R/2}$, 
\[\label{eq:r-vol-cap-inequality}
\int_K (dd^c \psi)^n \leq  \frac{C}{\veps^{n+1}} \exp\left(\frac{-\tau_0 \veps}{[cap(K,\Om_R)]^\frac{1}{n}} \right).\]
In particular, for any $\tau>0$,
$$
	\int_K (dd^c \psi)^n  \leq 	 \frac{C_\tau}{\veps^{n+1}(\tau_0\veps)^{n(1+\tau)}}  [cap(K,\Om_R)]^{1+\tau}  .
$$
\end{thm}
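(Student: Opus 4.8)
The plan is to deduce the estimate from Proposition~\ref{prop:H-sublevel-set}, applied to the relative $\Te$-extremal function of $K$ in $\Om_R$, after first disposing of two degenerate regimes. Write $\mu=(dd^c\psi)^n$. If $cap(K,\Om_R)=0$ then $K$ is pluripolar and $\mu(K)=0$, since the Monge--Amp\`ere measure of a bounded plurisubharmonic function carries no mass on pluripolar sets, so the inequality is trivial. If instead $cap(K,\Om_R)>1/A_1$ with $A_1$ as in Lemma~\ref{lem:cap-equiv}, then (for $\tau_0$ chosen small enough below) $\exp(-\tau_0\veps/[cap(K,\Om_R)]^{1/n})$ is bounded below by a positive constant depending only on the data, while Corollary~\ref{cor:mass-decay} with $k=n$, applied to $\psi|_{\Om_R}\in PSH(\Om_R)\cap L^\infty(\Om_R)$, gives $\mu(K)\le\mu(D_\veps(R))\le C\|\psi\|_\infty^n/\veps^n$; since $\veps<R/4$ is bounded, the inequality follows after enlarging $C$. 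Thus I may assume $0<cap(K,\Om_R)\le 1/A_1$.

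In this main case I would set
$$
v:=\Big(\sup\big\{w\in PSH(\Om_R,\Te):\ w\le 0,\ w\le -1\ \text{on}\ K\big\}\Big)^{*},
$$
the relative $\Te$-extremal function of $K$ in the quasi-hyperconvex domain $\Om_R$. Using the theory of such extremal functions on quasi-hyperconvex domains from \cite{CKZ11} — together with $K\subset\overline{D_\veps(R)}\Subset\Om_R$, which secures the correct boundary behaviour — I would record that $v\in\cP_0(\Te)$, $-1\le v\le 0$, that $v=-1$ on $K$ off a pluripolar set, and that $(\Te+dd^c v)^n$ is supported in $K$. Hence $\Te_v^n$ is carried by $K$, so by the definition of $cap_\Te$ and Lemma~\ref{lem:cap-equiv},
$$
a^n:=\int_{\Om_R}\Te_v^n=\int_K\Te_v^n\le cap_\Te(K,\Om_R)\le A_1\,cap(K,\Om_R)\le 1 .
$$

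Next I would apply Proposition~\ref{prop:H-sublevel-set} with $s=\tfrac12$: since $K$ is contained, up to a $\mu$-null pluripolar set, in $\{v<-\tfrac12\}$,
$$
\mu(K)\ \le\ \mu\big(\{v<-\tfrac12\}\big)\ \le\ 2^{n+1}C_0\,e^{-\wt\tau_0/(2a)}\ \le\ 2^{n+1}C_0\,\exp\!\Big(\frac{-\wt\tau_0}{2A_1^{1/n}[cap(K,\Om_R)]^{1/n}}\Big),
$$
where $C_0,\wt\tau_0$ are the constants of Proposition~\ref{prop:H-sublevel-set}. Setting $\tau_0:=2\wt\tau_0/(A_1^{1/n}R)$ one has $\tau_0\veps\le\wt\tau_0/(2A_1^{1/n})$ for $0<\veps<R/4$, and, bounding $\veps^{-(n+1)}$ below by a constant, this yields $\int_K(dd^c\psi)^n\le\frac{C}{\veps^{n+1}}\exp(-\tau_0\veps/[cap(K,\Om_R)]^{1/n})$ for a suitable uniform $C$ (the argument in fact gives the stronger, $\veps$-free exponent $-c/[cap(K,\Om_R)]^{1/n}$, the displayed weaker form being the one used in Section~\ref{sec:holder-ss}). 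The final assertion then follows from the elementary bound $t^{m}e^{-t}\le m^{m}e^{-m}$ applied with $m=n(1+\tau)$ and $t=\tau_0\veps/[cap(K,\Om_R)]^{1/n}$, with $C_\tau:=C\,[n(1+\tau)]^{n(1+\tau)}e^{-n(1+\tau)}$.

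The step I expect to demand the most care is the construction and analysis of the relative $\Te$-extremal function in the second paragraph — verifying $v\in\cP_0(\Te)$, the equality $v=-1$ quasi-everywhere on $K$, and the concentration of $\Te_v^n$ on $K$, on the merely quasi-hyperconvex (not necessarily pseudoconvex) domain $\Om_R$. It is precisely here that the hypothesis of a H\"older continuous subsolution, encoded through Proposition~\ref{prop:H-sublevel-set}, gets converted into the volume--capacity inequality; everything else is bookkeeping: the case split on the size of $cap(K,\Om_R)$, the choices of $\tau_0$ and $C$, and the passage to the polynomial form.
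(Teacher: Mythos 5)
Your overall strategy matches the paper's: use an extremal function for $K$ lying in $\cP_0(\Te)$, bound its total $\Te$-Monge--Amp\`ere mass by $cap_\Te(K)$, observe that $K$ sits in a sublevel set of that extremal function, and then feed this into Proposition~\ref{prop:H-sublevel-set}. The case split on the size of $cap(K,\Om_R)$, the passage from $cap_\Te$ to $cap$ via Lemma~\ref{lem:cap-equiv}, and the derivation of the polynomial form from the exponential one are all fine and essentially as in the paper.

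The gap is precisely at the step you flagged yourself but did not resolve. You take the \emph{standard} relative $\Te$-extremal function
$v=\big(\sup\{w\in PSH(\Om_R,\Te):w\le 0,\ w\le -1\ \text{on}\ K\}\big)^*$
and assert $v\in\cP_0(\Te)$, in particular that $v(z)\to 0$ as $z\to\d\Om_R$. In a hyperconvex domain this follows by scaling the psh exhaustion: if $\rho<-\veps$ on $K$ then $\max\{\rho/\veps,-1\}$ is a competitor that tends to $0$ at the boundary. But here $\Om_R$ is only \emph{quasi}-hyperconvex, with a $\Te$-psh (not psh) exhaustion $\rho_R$, and the scaling $\rho_R/\veps$ is only $(\Te/\veps)$-psh, hence \emph{not} $\Te$-psh when $\veps<1$. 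So no competitor for $v$ with zero boundary value is available by this route, and since $\Om_R=\Om\cap B_R$ need not be hyperconvex (no pseudoconvexity of $\d M$ is assumed), the boundary behaviour of $v$ — and consequently the balayage argument concentrating $\Te_v^n$ on $K$ and the bound $\int\Te_v^n\le cap_\Te(K)$ — is not established. The reference to \cite{CKZ11} does not directly supply this for the $(-1)$-constrained extremal function.

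The paper avoids this exactly by changing the constraint: it uses $h_{\rho,K}=\sup\{w\in PSH(\Om_R,\Te):w\le 0,\ w|_K\le\rho_R\}$. Then $\rho_R$ itself is a competitor, so $\rho_R\le h_{\rho,K}\le 0$ is automatic, giving both the zero boundary value and membership in $\cP_0(\Te)$ for free; the proposition is then applied with $s=|\sup_K\rho_R|\ge\veps$, which is where the $\veps$ in the statement comes from. (Incidentally, this explains why the paper's bound carries $\veps$ in the exponent while your argument would give an $\veps$-free exponent: if the standard extremal function lay in $\cP_0(\Te)$ one could indeed take $s=1/2$, but that is precisely what is not justified here.) To repair your write-up you would either need to prove the boundary behaviour of $v$ in the quasi-hyperconvex, $\Te$-psh setting, or switch to the $\rho_R$-constrained extremal function as the paper does.
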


\begin{proof} Our first observation is that we only need to consider subsets satisfying $cap_\Te(K) \leq 1.$ Otherwise, if $cap_\Te(K) >1$, then by Lemma~\ref{lem:cap-equiv}, it follows that $cap(K, \Om_R) \geq 1/A_1$. This implies that  for $0<\tau_0 \leq 2n$,
$$
 \exp\left(\frac{-\tau_0 \veps}{[cap(K,\Om_R)]^\frac{1}{n}} \right) \geq \exp\left(\frac{-\tau_0 \veps}{A_1^\frac{1}{n}} \right) \geq \exp\left(\frac{-n R/2}{A_1^\frac{1}{n}} \right).
$$
Then, the inequality follows from Corollary~\ref{cor:mass-decay} with some  uniform constant $C$. In what follows we work with a subset $E\subset \Om_R$ satisfying
$$
	cap_\Te(E, \Om_R) \leq 1.
$$

Since the domain $\Om_R$ is fixed, we omit it in capacities formulae. We consider yet another capacity which takes into account the geometry of the domain.  Let $E \subset \Om_R$ be a Borel subset,
\[\label{eq:r-cap}
	cap_\rho (E) :=  \sup\left\{ \int_E \Te_w^n : w \in PSH(\Om_R, \Te),\; \rho_R \leq w \leq 0\right\}.
\]
Recall that $-1\leq \rho\leq 0$ is the defining function for $\d M$ on $B_{2R}$. By definition \eqref{eq:rho-R} we have   $-1 \leq \rho_R \leq 0$ in $\Om$. Hence,
\[
	cap_\rho(E) \leq cap_\Te(E).
\]
So $cap_\rho(\bullet)$ does not charge pluripolar sets. Thus, without loss of generality we may assume $K$ is a compact regular subset, in the sense that $h_{\rho, K}$ is continuous in $\Om_R$.
The relative extremal function of this $\rho$-capacity is given by
$$
	h_{\rho, K}(x) = \sup\left\{ w(x): w \in PSH(\Om_R, \Te), \;w_{|_K} \leq \rho_R, w \leq 0  \right\}.
$$
The desired property of $h_{\rho,K}$ is that $\rho_R \leq h_{\rho, K}\leq 0$, which implies that $h_{\rho,K}$ has zero boundary value and $h_{\rho,K} \in \cP_0 (\Te)$.  Moreover, for every compact set $K\subset \Om_R$ the balayage argument shows  $(\Te + dd^c h_{\rho,K})^n \equiv 0$ on $\Om_R \setminus K$. Therefore,
\[\label{eq:rho-capacity-formula}
	cap_\rho(K) \geq \int_{K} (\Te + dd^c h_{\rho, K})^n = \int_{\Om_R} (\Te + dd^c h_{\rho, K})^n.
\]
(This inequality is indeed an identity  we refer the readers to \cite[Proposition~6.5]{BT82} and its generalization in \cite{DL15, DL17}).
 Note that
$$
	K \subset \{h_{\rho,K} = \rho_R\} = \{h_{\rho,K} \leq \rho_R\}.
$$
Hence 
$$
	K \subset \{h_{\rho, K} \leq  \sup_K \rho_R=: \de_K \}.
$$
Write $\mu := (dd^c \psi)^n$. Applying Proposition~\ref{prop:H-sublevel-set} for $h_{\rho,K} \in \cP_0(\Te)$ we get
$$
	\mu(K) \leq \mu(h_{\rho,K} \leq \de_K) \leq \frac{C}{|\de_K|^{n+1}}\exp\left(\frac{-\wt\tau_0 |\de_K|}{a} \right),
$$
where 
$$
	a^n = \int_{\Om_R} (\Te+ dd^c h_{\rho,K})^n \leq cap_\rho(K) \leq cap_\Te(K) \leq 1.
$$
Thus
$$
	\mu (K) \leq \frac{C}{|\de_K|^{n+1}} \exp\left(\frac{-\wt\tau_0 |\de_K|}{[cap_\Te(K)]^\frac{1}{n}} \right).
$$

If $K \subset \{z \in \Om_R : \rho_R(z) < -\veps\} \cap B_{R/2}$, then
$
	|\de_K|  = |\sup_K \rho_R| \geq \veps;
$
hence for such compact sets
\[\label{eq:r-vol-cap-inequality}
\mu (K) \leq \frac{C}{\veps^{n+1}} \exp\left(\frac{-\wt\tau_0 \veps}{[cap_\Te(K)]^\frac{1}{n}} \right).\]
By the equivalence of the capacities  in Lemma~\ref{lem:cap-equiv} the proof of the theorem follows with $\tau_0 = \wt\tau_0 /A_1^\frac{1}{n}$.
\end{proof}

\begin{remark} \label{rmk:moderate-interior} In an interior coordinate chart of $\ov M$ we have the corresponding inequality due to Dinh-Nguyen-Sibony \cite{DNS}.  Let us identify a fixed holomorphic coordinate ball  with $B_{2R}\subset \bC^n$. Suppose that 
\[\label{eq:holder-psh-subsolution2}\notag
\psi \in PSH(B_{2R}) \cap C^{0,\al}(\ov B_{2R})\]
with $0< \al \leq 1$. 
Then, there exist  uniform constants $C = C (R, \psi)$  and $\tau_0 = \tau_0(n, \al)>0$ such that for every compact subset $K \subset B_{R/2}$,
\[\label{eq:dns-inequality}
	\int_K (dd^c \psi)^n 
	\leq C \exp\left(\frac{-\tau_0}{[cap(K,B_{R})]^\frac{1}{n}} \right).
\]
\end{remark}

We are in the position to state the main stability estimate of this section.

\begin{prop}\label{prop:boundary-L1} Take $\psi \in PSH(\Om) \cap C^{0,\al}(\ov\Om)$ as in \eqref{eq:holder-psh-subsolution}.
Assume that $u, v \in PSH(\Om) \cap L^\infty(\Om)$ and $u = v$ on $\{z \in \Om: \rho (z)\geq -\veps\}$.
Then,
$$
	\int_{\Om \cap B_{R/2}}  |u-v| (dd^c \psi)^n \leq \frac{C}{\veps^{n}} \|u-v\|_{L^1(\Om_R)}^{\al_1},
$$
where $C  = C(\Om, R, \psi, \|u\|_\infty, \|v\|_\infty)>0$ and $\al_1 = \al_1 (n, \al)>0$ are uniform constants.
\end{prop}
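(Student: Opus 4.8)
The plan is to reduce the estimate to the volume--capacity inequality of Theorem~\ref{thm:moderate-boundary} combined with the standard integration-by-parts technique from \cite{GKZ08, Ng17, Ng18}. First I would reduce to the case $0\le u,v\le$ some bound: after subtracting a constant and using that the assertion is symmetric in $u,v$, one may assume $-1\le u,v\le 0$, and it suffices to estimate $\int_{\Om\cap B_{R/2}} (v-u)_+\,(dd^c\psi)^n$, since by symmetry the full $|u-v|$ integral is controlled by the sum of the two one-sided integrals. The crucial structural point is the hypothesis $u=v$ on $\{\rho\ge -\veps\}$, which forces the set $\{u<v-t\}$ (for $0<t$) to be contained in $\{\rho<-\veps\}$, hence in $D_\veps(R)$ up to adjusting constants, so that Theorem~\ref{thm:moderate-boundary} applies to all the sublevel sets that appear.

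The main body of the argument: write $\int (v-u)_+\,(dd^c\psi)^n = \int_0^\infty (dd^c\psi)^n(\{u<v-t\})\,dt$ via the layer-cake formula. For each $t>0$ the set $K_t=\overline{\{u<v-t\}}\cap B_{R/2}$ is compact (by the matching hypothesis and an exhaustion/regularization of $u,v$ by smooth decreasing sequences on a slightly larger half-ball, as done in Remark~\ref{rmk:admissible} and Lemma~\ref{lem:lift}) and sits inside $D_{c\veps}(R)\cap B_{R/2}$. Theorem~\ref{thm:moderate-boundary}, in its second (polynomial) form with a fixed choice of small exponent $\tau$, gives
$$
	(dd^c\psi)^n(K_t)\le \frac{C_\tau}{\veps^{n(2+\tau)+1}}\,\big[cap(K_t,\Om_R)\big]^{1+\tau}.
$$
Then one estimates $cap(\{u<v-t\},\Om_R)$ by the standard Bedford--Taylor bound: for $w\in PSH(\Om_R,\Te)$ with $0\le w\le 1$, on the set $\{u<v-t\}$ one has $u<v-t\le (1-w')\cdot 0 + w'$-type comparison, more precisely $t\cdot\mathbf 1_{\{u<v-t\}}\le (v-u)$, so that after the usual argument (comparing $\{u<v-t\}$ with $\{u<(1-s)v+sw\,\text{-combinations}\}$ and using that $(dd^c\psi)^n$ or rather $\Te_u^n$ dominates on the relevant set) one gets $cap(\{u<v-t\},\Om_R)\le C t^{-n}\|u-v\|_{L^1(\Om_R)}$ — this is the step where one invokes that $\int_{\{u<v-t\}}\Te_w^n$ is controlled, via integration by parts on the half-ball, by $t^{-1}\int_{\Om_R}(v-u)_+\,\Te_w\wedge(\cdots)$ and then by $t^{-n}\|(v-u)_+\|_{L^1}$ using the mass bounds of Corollary~\ref{cor:mass-decay} near the boundary of $\Om_R$.

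Assembling: with $\|u-v\|_{L^1(\Om_R)}=:N$, one obtains
$$
	\int (v-u)_+\,(dd^c\psi)^n\le \frac{C_\tau}{\veps^{n(2+\tau)+1}}\int_0^{N^{c}} \Big(\frac{CN}{t^n}\Big)^{1+\tau}\!dt + (\text{tail for }t\ge N^c),
$$
where the tail vanishes because $\{u<v-t\}=\emptyset$ once $t>\|u-v\|_\infty$; splitting the integral at $t_0=N^{\beta}$ for a suitable exponent $\beta\in(0,1)$ and bounding the small-$t$ part by $t_0\cdot(dd^c\psi)^n(\Om\cap B_{R/2})\le C t_0$ (using finiteness of the total Monge--Amp\`ere mass of $\psi$) and the large-$t$ part by the displayed polynomial bound (choosing $\tau$ small so the exponent $n(1+\tau)-1$ stays above $0$ and the $t$-integral converges), one arrives at $\int |u-v|(dd^c\psi)^n\le C\veps^{-n}N^{\al_1}$ for some $\al_1=\al_1(n,\al)>0$, after optimizing $\beta$. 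I expect the main obstacle to be the careful bookkeeping of the $\veps$-dependence: one must verify that all constants from Corollary~\ref{cor:mass-decay}, Theorem~\ref{thm:moderate-boundary}, and the integration by parts near $\d\Om_R$ combine to give exactly a single power $\veps^{-n}$ (not worse), which forces the particular choice of $\tau$ and the splitting exponent $\beta$; the H\"older exponent $\al$ enters only through $\al_1$ via the exponent in Theorem~\ref{thm:moderate-boundary}/Proposition~\ref{prop:H-sublevel-set}.
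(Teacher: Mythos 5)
Your approach is genuinely different from the paper's. The paper estimates $\int \eta(v-u)(dd^c\psi)^k\wedge\Te^{n-k}$ directly by induction over $k$, regularizing $\psi$ to $\psi_t$, splitting $dd^c\psi = dd^c\psi_t + dd^c(\psi-\psi_t)$, using $dd^c\psi_t\leq Ct^{-2}\Te$ together with the inductive hypothesis for the first piece, integrating by parts and using $|\psi_t-\psi|\leq Ct^\al$ together with Corollary~\ref{cor:mass-decay} for the second, and then balancing in $t$. You instead propose to route through the layer-cake formula, apply the volume-capacity inequality (Theorem~\ref{thm:moderate-boundary}) to each level set $\{u<v-t\}$, and close the loop with a capacity-in-terms-of-$L^1$ estimate.

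The gap is in the middle step. The bound
$$cap(\{u<v-t\},\Om_R)\leq \frac{C}{t^n}\,\|u-v\|_{L^1(\Om_R)}$$
is not established, and I do not see how to establish it with only the hypotheses available here ($u,v$ bounded psh, agreeing near $\d\Om$, with no assumption on $(dd^c u)^n$ or $(dd^c v)^n$). The comparison-principle route you sketch gives, after substituting $v+tw$ into the comparison principle,
$$t^n\int_{\{u<v-2t\}}(dd^c w)^n \;\leq\; \int_{\{u<v-t\}}(dd^c u)^n,$$
which controls the capacity by the Monge-Amp\`ere mass of $u$ on the sublevel set, not by $\|u-v\|_{L^1}$; there is no way to pass from the former to the latter without an a priori bound on $(dd^c u)^n$. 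Likewise, the integration-by-parts alternative you mention (reducing $\int_{\{u<v-t\}}\Te_w^n$ by powers of $t^{-1}$) produces wedge factors $\Te_w$, not Lebesgue volume, so Corollary~\ref{cor:mass-decay} bounds the total mass uniformly but does not yield a factor of $\|u-v\|_{L^1}$. The analogous estimates actually proved in the paper (Lemma~\ref{lem:sublevel-set}, \eqref{eq:decay-sublevel-set}) are of the form $cap(\{v<-s\})\leq Ca^n/s^n$ with $a^n=\int\Te_v^n$ — again Monge-Amp\`ere mass on the right, not $L^1$ norm. This is precisely the obstruction that the paper's argument avoids: by regularizing $\psi$ rather than the sublevel sets, the induction produces $L^1$ norms directly, with the $\veps$-dependence controlled by Corollary~\ref{cor:mass-decay} applied to the \emph{smooth background} $\Te$ rather than to an arbitrary candidate $w$ in the capacity. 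If you want to salvage a capacity-based route, you would need to replace the missing step with a genuine quantitative quasicontinuity estimate (capacity of $\{u<v-t\}$ in terms of Lebesgue volume), which in turn would need its own volume-capacity input and a careful tracking of how the boundary distance enters.
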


\begin{proof}
Note that 
$$\begin{aligned}
\d (\Om \cap B_{R/2}) 
&= (\d \Om \cap B_{R/2}) \cup (\Om \cap \d B_{R/2}) \\
& = (\{\rho =0\} \cap B_{R/2}) \cup (\Om \cap \d B_{R/2}).
\end{aligned}$$
By the assumption $u=v$ on $\{z \in \Om: \rho(z) \geq -\veps\}$ the integrand on the left hand side is zero near the first portion of the boundary. Furthermore  $\psi$ is H\"older continuous plurisubharmonic function on $\Om$ a neighborhood of the second boundary portion, so it has finite Monge-Amp\`ere mass by the Chern-Levine-Nirenberg inequality.

By subtracting from $u,v$  a  constant we may assume that $u,v \leq 0$. Also dividing both sides by  $(1 + \|u\|_\infty) (1+ \|v\|_\infty)(1 + \|\psi\|_\infty)^n$
we may assume that $$ -1 \leq u, v, \psi \leq 0 \quad \text{on }\Om.$$

First we suppose $v\geq u$. Let $0\leq \eta \leq 1$ be a cut-off function in $\Om$ such that $\eta \equiv 1$ on $\Om\cap B_{R/2}$ and $\supp \eta \subset \Om \cap B_{3R/4}$. Then $$\supp (\eta (v-u)) \subset D_\veps (R) \subset\subset \Om_R.$$
Thus  it is enough to estimate
$$
	\int_{\Om_R} \eta (v-u) (dd^c\psi)^n .
$$
Let us still write $\psi$ for its H\"older continuous extension of $\psi \in C^{0,\al}$ onto  $B_{2R}$.
 We are going to prove,  by induction over $0\leq k\leq n$,  the inequalities
\[\label{eq:induction-k}
	\int_{\Om_R} \eta (v-u) (dd^c\psi)^k \wed \Te^{n-k} \leq \frac{C}{\veps^k} \|u-v\|_{L^1(\Om_R)}^{\tau_k}.
\]
For $k=n$ it is our statement,  for $k=0$ the inequality holds with $\al_1 =\tau_0=1$. Assume that the inequality is true for $ k\in [0,n)$. 
We need to show it for $k+1 $ with possibly smaller $\tau_{k+1}>0$ and larger $C$.

 Consider the standard  regularizing family  $\chi_t(z) = \chi (|z|^2/t^2)$ for  $0< t < R/4$. Define
$$
	T_k = (dd^c \psi)^k \wed \Te^{n-k} .
$$
Write $\psi_t = \psi *\chi_t$ on $B_{2R}$. Then, for $T = (dd^c\psi)^{k}\wed \Te^{n-k-1}$ we have
\[\label{eq:induction-sum}\begin{aligned}
	\int_{\Om_R} \eta (v-u) dd^c \psi \wed T 
&\leq \left| \int_{\Om_R} \eta (v-u) dd^c \psi_t \wed T \right|\\
&+\quad \left|\int_{\Om_R} \eta (v-u) dd^c (\psi_t -\psi) \wed T \right| \\
&=:| I_1| + |I_2|. 
\end{aligned}\]
Since $\|\psi\|_\infty \leq 1$,
 $$dd^c \psi_t \leq \frac{C \|\psi\|_\infty}{t^2} \Te \leq \frac{C \Te}{t^2} \quad\text{on } B_{2R}.$$ 
Using this and  the induction hypothesis we get
\[\label{eq:i1}
	|I_1| \leq \frac{C \|\psi\|_{\infty}}{t^2} \int_{\Om_R} \eta (v-u) T \wed \Te \leq \frac{C  \|v-u\|_{L^1(\Om_R)}^{\tau_k}}{t^2}. 
\]
Next, by integration by parts we rewrite the second integral in \eqref{eq:induction-sum} as
$$
	I_2 =\int_{\Om_R} (\psi_t -\psi) dd^c (\eta (v-u)) \wed T.
$$
Compute
\[\label{eq:int-by-parts}\begin{aligned}
	dd^c [\eta (v-u)] \wed T 
&= 	(v-u) dd^c \eta \wed T + 2d \eta \wed d^c (v-u) \wed T \\ &\quad+ \eta dd^c (v-u) \wed T.
\end{aligned}\]
Note that $\eta$ is smooth on $\Om$, so 
$dd^c \eta \leq C \Te$.  By the Cauchy-Schwarz inequality
$$\begin{aligned}
&	\left| \int (\psi_t -\psi)d\eta\wed d^c (v-u) \wed T\right|^2 \\ 
&\leq \int_{D_\veps(R)} (\psi_t-\psi)^2d\eta \wed d^c \eta \wed T \int_{D_\veps(R)} d(v-u) \wed d^c (v-u)\wed T.
\end{aligned}$$
Observe that 
$$\begin{aligned}
	|\psi_t (z)- \psi(z)| 
&	\leq \int_{B(0,1)} |\psi(z - tw) - \psi(z)| \chi(|w|^2) dV_{2n}(w)  \\
&	\leq C_\al t^\al
\end{aligned}$$
with $C_\al$  the H\"older norm of $\psi$ on $\ov\Om$,    and $d\eta \wed d^c \eta \leq C_1\Te$,
$$\begin{aligned}
	 \int_{D_\veps(R)} (\psi_t-\psi)^2d\eta \wed d^c \eta \wed T &\leq C_\al C_1 t^{2\al} \int_{D_\veps(R)} T\wed\Te  &\leq \frac{C t^{2\al} \|\psi\|_\infty^{k}}{\veps^{k}},
\end{aligned}$$ 
where we used Corollary~\ref{cor:mass-decay} for the last inequality.
Similarly by Remark~\ref{rmk:gradient},
$$
	\int_{D_\veps(R)} d(v-u) \wed d^c (v-u)\wed T \leq \frac{C \|\psi\|_\infty^{k} \|u\|_\infty^2\|v\|_\infty^2}{\veps^{k+1}}.
$$
For the last term in \eqref{eq:int-by-parts} using Corollary~\ref{cor:mass-decay} again and the H\"older continuity of $\psi$ we have 
$$\begin{aligned}
	\left|\int_{D_\veps(R)} (\psi_t -\psi)\eta dd^c (v-u) \wed T \right|
&\leq \int_{D_\veps(R)}  |\psi_t -\psi|\eta (dd^c u + dd^c v) \wed T \\
&\leq \frac{Ct^{\al} (\|u\|_\infty + \|v\|_\infty) \|\psi\|_\infty^{k} }{\veps^{k+1}}.
\end{aligned}$$
Combining the above estimates  we conclude 
\[\label{eq:i2}
	|I_2| \leq  \frac{C t^\al}{\veps^{k+1}}.
\]
From  \eqref{eq:i1} and \eqref{eq:i2} we get
$$	\int_{\Om_R} \eta (v-u)  (dd^c\psi)^{k+1} \leq |I_1|+ |I_2| \leq \frac{C \|u-v\|_{L^1(\Om_R)}^{\tau_k}}{t^2} + \frac{C t^\al}{\veps^{k+1}}.
$$

If $\|u-v\|_{L^1(\Om_R)}^{\tau_k/4} \geq R/4$, then the inequality of step $(k+1)$  holds for a fixed $t= R/8$ and $\tau_{k+1} = \al \tau_k/4$. On the other hand, we can choose $t = \|u-v\|_{L^1(\Om_R)}^{\tau_k/4}$ and this implies for $\tau_{k+1} = \al \tau_k/4>0$,
$$
	\int_{\Om_R} \eta (v-u)  (dd^c\psi)^{k+1} \leq C \|u-v\|_{L^1(\Om_R)}^{\tau_{k+1}}.
$$
The induction proof is completed under extra hypothesis $v\geq u.$
For the general case  use the identity 
$$
	|u-v| = (\max\{u,v\} - u) + (\max\{u,v\} -v),
$$
and apply the above proof for the pairs ($\max\{u,v\}$, $u$) and ($\max\{u,v\}$, $v$).
\end{proof}

By a  similar (easier) argument for an interior chart of $\ov M$, which we identify with the ball $B_{2R}$ of radius $2R>0$ centered at $0$ in $\bC^n$, we get the following stability estimate.

\begin{lem}\label{lem:interior-L1} Suppose that 
$\psi \in PSH(B_{2R}) \cap C^{0,\al}(\ov B_{2R})$ with $0<\al\leq 1$.
 Assume that $u, v \in PSH(B_{2R}) \cap L^\infty(B_{2R})$. Then,
$$
	\int_{B_{R/2}} |v-u|  (dd^c \psi)^n \leq C  \| u-v\|_{L^1(B_R)}^{\al_1},
$$
where   $C = C(R, \psi, \|u\|_\infty, \|v\|_\infty)$ and $\al_1 = \al_1(n,\al)>0$ are uniform constants.
\end{lem}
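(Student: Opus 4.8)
The idea is to replicate the proof of Proposition~\ref{prop:boundary-L1} on the interior chart, where the domain geometry is simpler and no collar $D_\veps(R)$ is needed. As before, by subtracting a constant and rescaling by $(1+\|u\|_\infty)(1+\|v\|_\infty)(1+\|\psi\|_\infty)^n$, one reduces to $-1\le u,v,\psi\le 0$ on $B_{2R}$. By the decomposition $|u-v| = (\max\{u,v\}-u) + (\max\{u,v\}-v)$ it suffices to treat the case $v\ge u$, estimating $\int_{B_{R/2}}(v-u)(dd^c\psi)^n$. Introduce a cut-off $0\le\eta\le1$ with $\eta\equiv1$ on $B_{R/2}$ and $\supp\eta\subset B_{3R/4}$, so that it is enough to bound $\int_{B_{R}}\eta(v-u)(dd^c\psi)^n$.

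The heart of the argument is the induction over $0\le k\le n$ on the inequalities
\[\label{eq:int-induction-k}
	\int_{B_{R}} \eta (v-u) (dd^c\psi)^k \wed \Te^{n-k} \leq C \|u-v\|_{L^1(B_R)}^{\tau_k},
\]
with $\Te$ now an auxiliary K\"ahler form on $B_{2R}$ dominating $dd^c\psi$ after scaling (e.g. $\Te=dd^c(C|z|^2)$). The case $k=0$ holds with $\tau_0=1$. For the inductive step one writes $\psi_t=\psi*\chi_t$, splits $\int\eta(v-u)dd^c\psi\wed T$ into $|I_1|+|I_2|$ exactly as in \eqref{eq:induction-sum}, uses $dd^c\psi_t\le (C/t^2)\Te$ together with the induction hypothesis to get $|I_1|\le C\|v-u\|_{L^1(B_R)}^{\tau_k}/t^2$, and integrates by parts in $I_2$. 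The three terms in $dd^c[\eta(v-u)]\wed T$ are controlled by the Cauchy--Schwarz inequality combined with $|\psi_t-\psi|\le C_\al t^\al$ and the Chern--Levine--Nirenberg inequality on $B_R$ (the interior analogue of Corollary~\ref{cor:mass-decay}, which here is simply a fixed bound on $B_R$ for masses of bounded psh functions against $(dd^c\psi)^k\wed\Te^{n-k-1}$, with no $\veps^{-k}$ factor), yielding $|I_2|\le C t^\al$. Optimizing over $t$ (taking $t=\|u-v\|_{L^1(B_R)}^{\tau_k/4}$, or a fixed $t=R/8$ if that exceeds $R/4$) gives \eqref{eq:int-induction-k} for $k+1$ with $\tau_{k+1}=\al\tau_k/4$.

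The main simplification compared with Proposition~\ref{prop:boundary-L1} is that all the Monge--Amp\`ere masses appearing on the right-hand side are now \emph{bounded by fixed constants} depending only on $R$ and $\|\psi\|_{C^{0,\al}}$, since $B_R\subset\subset B_{2R}$ and every bounded psh function on $B_{2R}$ has locally finite mixed masses against the fixed currents $(dd^c\psi)^k\wed\Te^{n-k}$; there is no need for the exhaustion $\rho_R$, the $\Te$-capacity machinery, or the sublevel-set volume estimates of Lemma~\ref{lem:sublevel-set}. Consequently there is no $\veps$-dependence, which is why the constant $C$ in the statement is genuinely uniform and $\al_1$ depends only on $(n,\al)$. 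I do not expect a real obstacle here; the only mild point requiring care is to make sure the Chern--Levine--Nirenberg bounds are applied on a slightly larger ball (say $B_{3R/4}$ or $B_R$) than the one on which $\eta(v-u)$ is supported, so that the integration by parts has no boundary contribution — exactly as engineered by the choice of $\supp\eta$.
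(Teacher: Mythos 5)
Your proposal is correct and follows essentially the same route as the paper: the paper's own proof of Lemma~\ref{lem:interior-L1} simply invokes the Chern--Levine--Nirenberg inequality on $B_R$ to bound the mixed masses $\int_{B_R}(dd^c u)^k\wed(dd^c v)^\ell\wed(dd^c\psi)^m\wed\Te^{n-k-\ell-m}$ by constants depending only on $R$ and the sup-norms, and then states that the argument proceeds exactly as in Proposition~\ref{prop:boundary-L1}. You have correctly identified that replacing Corollary~\ref{cor:mass-decay} (and its $\veps^{-k}$ factors) with these fixed CLN bounds is the only change needed, and the rest of your inductive scheme matches the paper's.
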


\begin{proof} Let $T = (dd^c u)^k \wed (dd^c v)^\ell \wed (dd^c \psi)^{m} \wed \Te^{n-k-\ell-m}$. Since these functions are plurisubharmonic on $B_{2R}$, the Chern-Levine-Nirenberg inequality gives
$$	\int_{B_R} T \leq C(R) \|u\|_\infty^k \|v\|_\infty^\ell \|\psi\|_\infty^m.
$$
Then, the proof goes exactly along the  lines of  the proof of Proposition~\ref{prop:boundary-L1}.
\end{proof}

\section{H\"older continuous subsolution theorems}
\label{sec:holder-ss}

In this section we prove Theorem~\ref{thm:holder-ss}. 
We first show that the global capacity is equivalent to the Bedford-Taylor capacity defined via a finite covering.
We fix  a finite covering of $\ov M$ - $\{B_i(s)\}_{i\in I} \cup \{U_j(s)\}_{j\in J}$, where $B_i(s) = B_i(x_i,s)$ and $U_j(s) = U_j(y_j, s)$ are coordinate balls and coordinate half-balls centered at $x_i$ and $y_j$ respectively, and  of radius $0<s<1$. We choose $s>0$ so small that $B_i(2s)$ and $U_j(2s)$ are still contained in holomorphic charts. 
For any Borel set $E \subset M$ we can define another capacity
\[
	cap'(E) = \sum_{i\in I} cap(E\cap B_i(s), B_i(2s)) + \sum_{j\in J} cap(E\cap U_j(s), U_j(2s)), 
\]
where $cap(\bullet,\bullet)$ on the right hand side is just the Bedford-Taylor capacity.

\begin{prop}  \label{prop:two-capacities}
Two capacities $cap_\om$ and $cap'$ are equivalent. More precisely, there exists a uniform constant $A_0>0$ such that for any Borel set $E\subset M$,
\[
	\frac{1}{A_0} cap'(E) \leq cap_\om (E) \leq A_0 cap'(E).
\]
\end{prop}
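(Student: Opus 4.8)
The plan is to prove the two inequalities separately: $cap_\om(E)\le A_0\,cap'(E)$ is elementary and local, whereas $cap'(E)\le A_0\,cap_\om(E)$ carries the real content and is where I expect the main difficulty.

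\emph{Upper bound.} Shrink $s$ so that on each enlarged chart $B_i(2s)$ (resp.\ $U_j(2s)$) there is a smooth $\phi_i$ with $dd^c\phi_i\ge\om$ — e.g.\ $\phi_i=A|z-x_i|^2$ in the chart coordinates, with $A$ large depending only on $\om$, which has oscillation $\le 4As^2=:C_0$ over the chart. If $w\in PSH(M,\om)$ with $0\le w\le1$, then $dd^c(w+\phi_i)=(\om+dd^cw)+(dd^c\phi_i-\om)\ge\om+dd^cw\ge0$, so $w+\phi_i$ is plurisubharmonic on $B_i(2s)$ and $(dd^c(w+\phi_i))^n\ge(\om+dd^cw)^n$ as measures. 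Normalising $w+\phi_i$ to take values in $[0,1]$ costs a factor $(1+C_0)^n$, hence by the definition of the Bedford--Taylor capacity $\int_{E\cap B_i(s)}(\om+dd^cw)^n\le(1+C_0)^n\,cap(E\cap B_i(s),B_i(2s))$ and similarly on the half-balls; since $\{B_i(s)\}\cup\{U_j(s)\}$ covers $M\supset E$, finite subadditivity gives $\int_E(\om+dd^cw)^n\le(1+C_0)^n\,cap'(E)$, and taking the supremum over $w$ yields $cap_\om(E)\le(1+C_0)^n\,cap'(E)$.

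\emph{Lower bound.} First I would reduce to compact sets: $cap'(E)=\sup\{cap'(K):K\subset E\text{ compact}\}$ by inner regularity of the Bedford--Taylor capacity applied chart by chart, after which $cap'(K)\le A_0\,cap_\om(K)$ for compact $K$ suffices, by monotonicity of $cap_\om$. Fix an interior chart $B:=B_i(2s)$, its inner ball $B':=B_i(s)$, set $K_i:=\ov{K\cap B_i(s)}$ (compact in $B$), and let $u^*=u^*_{K_i,B}$ be the relative extremal function, so $-1\le u^*\le0$, $u^*\equiv-1$ on $K_i$, $u^*\to0$ at $\partial B$, and $\int_B(dd^cu^*)^n=\int_{K_i}(dd^cu^*)^n=cap(K_i,B)$. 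Being plurisubharmonic, $u^*$ is $\om$-plurisubharmonic with $(\om+dd^cu^*)^n\ge(dd^cu^*)^n$. The point is to globalise $u^*$: produce $W\in PSH(M,\om)$ with $-C_1\le W\le0$ (uniform $C_1\ge1$) agreeing with $u^*$ near $K_i$; then $\tfrac1{C_1}W+1\in[0,1]$ is $\om$-psh with $(\om+dd^c(\tfrac1{C_1}W+1))^n\ge C_1^{-n}(\om+dd^cW)^n$, so $cap_\om(K)\ge C_1^{-n}\int_{K_i}(\om+dd^cW)^n\ge C_1^{-n}cap(K_i,B)$, and summing over the finitely many charts (boundary half-balls are handled the same way via their defining-function exhaustions, as in Section~\ref{sec:boundary-chart}) gives $cap'(K)\le A_0\,cap_\om(K)$ with $A_0$ uniform.

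The main obstacle is exactly the construction of this global competitor $W$, i.e.\ the globalisation of the local extremal function $u^*$. Extending $u^*$ by a constant outside $B$ fails, since an $\om$-plurisubharmonic function is maximal near the outer sphere; instead one rescales ($\veps u^*+\veps\phi_i$ is $\om$-psh with $(\om+dd^c(\veps u^*+\veps\phi_i))^n\ge\veps^n(dd^cu^*)^n$) and pastes across the collar $B'\subset\subset B''\subset\subset B$ with an auxiliary $\om$-plurisubharmonic function on $M$, controlling the resulting Monge--Amp\`ere mass through the domination principle (Corollary~\ref{cor:DP}) and the uniform mass bounds of the Cegrell-type estimates of Section~\ref{sec:cegrell}. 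This is the mechanism used in the compact Hermitian case without boundary in \cite{KN1, KN4}, which I would adapt essentially verbatim; the only extra care needed here is to keep the gluing inside $M$ and to choose $\veps$, $C_1$ and the comparison constants uniformly over the finite cover of $\ov M$.
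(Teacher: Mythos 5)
Your proof of the inequality $cap_\om(E)\le A_0\,cap'(E)$ is essentially the paper's: locally replace $\om$-psh $w$ by the psh function $w+\phi_i$, estimate with the Bedford--Taylor capacity after normalizing, and sum over the finite cover. That direction is fine.

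For the reverse inequality your framework is correct but you leave the decisive step -- the construction of the global competitor $W$ -- incomplete and point to the wrong tools. The paper does not use relative extremal functions at all, and it needs neither the domination principle (Corollary~\ref{cor:DP}) nor the Cegrell-type mass bounds from Section~\ref{sec:cegrell}. Instead it starts from an \emph{arbitrary} $v\in PSH(U(2s))$ with $-1\le v\le 0$ and directly builds the global competitor
$$\tilde v=\max\{av-a,\eta\}\quad\text{on }U(2s),\qquad \tilde v=0\quad\text{on }M\setminus U(2s),$$
where $\eta$ is a fixed smooth $\om$-psh bump (of the form $\epsilon\eta_0$) vanishing on $\ov M\setminus U(2s)$ with $\eta\le -3a$ on $U(s)$, and $0<a<1/2$ is a uniform constant. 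The crucial point, which your choice misses, is that $av-a$ is uniformly $\le -a<0$ on all of $U(2s)$, so $\limsup_{z\to\zeta}(av-a)\le -a<0=\eta(\zeta)$ for every $\zeta\in\d U(2s)\cap M$; hence the max collapses to $\eta$ near $\d U(2s)$ and the extension by $0$ is automatically $\om$-psh. With your relative extremal function $u^*_{K_i,B}\to 0$ along $\d B$, the quantity $\veps(u^*+\phi_i)$ is \emph{not} bounded uniformly away from $0$ there, so the analogous comparison with $\eta$ near $\d B$ (which also tends to $0$) needs a separate argument that you do not supply. In short: the paper's gluing is elementary and self-contained, whereas your sketch imports machinery that is irrelevant to this step and leaves a genuine gap exactly at the "main obstacle" you identify. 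Replacing the extremal function by an arbitrary $v$ and using the uniform gap $av-a\le-a$ removes both the gap and the excess baggage.
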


\begin{proof} The proof is an adaptation from \cite{Ko03}. We first prove for a uniform $C>0$,
\[ cap_\om(E) \leq C cap'(E).\]   
Let $i\in I \cup J$, and let $U(s)$ be either $B_i(s)$ or $U_i(s)$. Assume that  $\om \leq dd^c g$ for a strictly plurisubharmonic function $g \leq 0$ on a neighborhood of $\ov U(2s)$. Consider $v \in PSH(M, \om)$ with $-1\leq v \leq 0$. 
$$\begin{aligned}
	\int_{E \cap U(s)} (\om + dd^cv)^n  
&	\leq \int_{E \cap U(s)} (dd^c (g+ v))^n \\
&	\leq (\| g \|_\infty+1)^n cap(E \cap U(s), U(2s)).
\end{aligned}$$
Since $|I \cup J|$ is finite, the first inequality follows from  the sub-additivity of $cap_\om(\cdot)$. 
The inverse inequality will follow if one can show that for a fixed $i \in I \cup J$ and $U(s)$ as above there is a uniform $C>0$ such that 
\[
	cap(E \cap U(s), U(2s)) \leq C cap_\om(E).
\]
In fact,
let $v \in PSH(U(2s))$ and $-1 \leq v \leq 0$ in $U(2s)$. We wish to find  $\tilde v$  such that $\tilde v = a v -a$ on $U(s)$, $-1\leq \tilde v \leq 0$ and $ \tilde v \in PSH(M,\om)$, where $0<a<1/2$ is a uniform constant depending only on $M,\om$.

First we take a smooth function $\eta$ such that $\eta =0$ on $\ov M\setminus U(2s)$ and $\eta <0$ in $U(2s)$. Consider the function $\epsilon \eta$ for $\epsilon>0$ small. 
Since $\om$ is a Hermitian metric on $\ov M$, we can choose $\epsilon>0$ depending on $\eta$ and $\om$ such that $\epsilon \eta \in PSH(M, \om)$.  Choose $0< a <1/2$ so that $\epsilon\eta \leq -3 a$ on $U(s)$. Writing $\eta$ for $\epsilon \eta$, we conclude that there exists a smooth $\om$-psh function $\eta = 0$ on $M\setminus U(2s)$ and $\eta \leq -3a$ on $\bar U(s)$ for $0< a <1/2$.

The function $\tilde v$ is defined as follows:
\[
\tilde v :=
\begin{cases}
	\max\{a v - a, \eta\} \quad &\mbox{on} \quad U(2s), \\
	0 \quad &\mbox{on}\quad M\setminus U(2s).
\end{cases}
\]
As $\limsup_{z\to \zeta}v(z) \leq -a <  \eta(\zeta) =0$ for $\zeta \in \d U(2s) \cap M$, $\tilde v \in PSH(M, \om)$.  It is easy to see that $\tilde v$ satisfies all requirements. Thus,
\begin{align*}
	\int_E (\om + dd^c \tilde v)^n 
&\geq 	\int_{E\cap U(s)} (\om + a dd^c v)^n  \\
&\geq 	a^n \int_{E \cap U(s)} (dd^cv)^n.
\end{align*}
By taking supremum over $v$ it implies that $cap(E\cap U(s), U(2s)) \leq a^{-n} cap_\om(E)$.
\end{proof}

Using the equivalence of capacities above and local volume-capacity inequalities on boundary and interior charts (Theorem~\ref{thm:moderate-boundary} and Remark~\ref{rmk:moderate-interior}) we derive the global measure-capacity estimate on  $M$.
For $\veps>0$ small let us denote
\[\label{eq:M-epsilon}
	M_\veps = \{z \in M: {\rm dist}_\om(z, \d M) > \veps\},
\]
where ${\rm dist}_\om(\bullet, \d M)$ is the distance function on $M$ with respect to the Riemannian metric induced by $\om$.

\begin{lem}\label{lem:moderate} Let $\ul u \in PSH(M,\om) \cap C^{0,\al}(\ov M)$ for some $0<\al \leq 1$. Let $\mu$ be a positive Borel measure on $M$. Suppose $\mu \leq (\om+dd^c\ul u)^n$ in $M$. Then   there exist uniform constants $C, \al_0>0$ (also independent of $\veps$) such that for every compact set $K \subset M_\veps$, 
$$\mu (K) \leq \frac{C}{\veps^{n+1}} \exp\left( - \frac{\al_0 \veps}{[cap_\om(K)]^\frac{1}{n}} \right).$$ 
In particular, for any $\tau>0$,
$$
	\mu(K)  \leq 	 \frac{C_\tau}{\veps^{n+1}(\al_0\veps)^{n(1+\tau)}}  [cap_\om(K)]^{1+\tau},
$$
where $C_\tau$ depends additionally on $\tau.$
\end{lem}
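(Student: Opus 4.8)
The plan is to reduce the global estimate on $M_\veps$ to the local volume--capacity inequalities already established on boundary charts (Theorem~\ref{thm:moderate-boundary}) and interior charts (Remark~\ref{rmk:moderate-interior}), glued together via the equivalence of $cap_\om$ with the covering capacity $cap'$ (Proposition~\ref{prop:two-capacities}). First I would fix the finite covering $\{B_i(s)\}_{i\in I}\cup\{U_j(s)\}_{j\in J}$ of $\ov M$ used to define $cap'$, arranging that each $B_i(2s)$ sits in an interior holomorphic chart and each $U_j(2s)$ is a boundary half-ball as in Section~\ref{sec:boundary-chart}. On each chart the hypothesis $\mu\le(\om+dd^c\ul u)^n$ together with $\ul u\in C^{0,\al}(\ov M)$ lets me write the local model: there is a strictly psh $g$ on the chart with $\om\le dd^c g$, so that $\psi:=a(\ul u+g)$ (for a small uniform $a>0$ normalizing $-1\le\psi\le 0$) is a genuine H\"older continuous psh function with $a^n\mu\le(dd^c\psi)^n$ on that chart; this puts us exactly in the setting of \eqref{eq:holder-psh-subsolution} and \eqref{eq:holder-psh-subsolution2}.

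Next I would handle the geometric bookkeeping relating the intrinsic collar $M_\veps$ to the Euclidean collars $D_{\veps'}(R)$ appearing in Theorem~\ref{thm:moderate-boundary}. Since $\om$ is smooth up to $\d M$, the Riemannian distance ${\rm dist}_\om(\cdot,\d M)$ is comparable to the Euclidean distance to $\d M$ in each boundary chart, and the defining function $\rho$ is comparable to that Euclidean distance; hence there is a uniform $c>0$ with $K\cap U_j(s)\subset\{ \rho_{R}<-c\veps\}\cap B_{R/2}$ whenever $K\subset M_\veps$. For interior charts $B_i(s)$ there is no $\veps$ at all — Remark~\ref{rmk:moderate-interior} gives the (stronger) bound with a fixed exponential $\exp(-\tau_0/[cap(K,B_R)]^{1/n})$, which dominates the claimed $\frac{C}{\veps^{n+1}}\exp(-\al_0\veps/[cap_\om(K)]^{1/n})$ for $\veps$ small up to adjusting constants. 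So the estimate is genuinely only needed near the boundary, and there Theorem~\ref{thm:moderate-boundary} (applied with $\veps$ replaced by $c\veps$) gives on each $U_j$
$$
	\mu(K\cap U_j(s)) \le \frac{C}{(c\veps)^{n+1}}\exp\!\left(\frac{-\tau_0 c\veps}{[cap(K\cap U_j(s),U_j(2s))]^{\frac1n}}\right).
$$

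Finally I would assemble the pieces. Summing over the finitely many charts, using $cap(K\cap U_j(s),U_j(2s))\le A_0\,cap_\om(K)$ and $cap(K\cap B_i(s),B_i(2s))\le A_0\,cap_\om(K)$ from Proposition~\ref{prop:two-capacities}, and the elementary monotonicity $t\mapsto\exp(-\al_0\veps/t^{1/n})$ increasing in $t$, each summand is bounded by $\frac{C}{\veps^{n+1}}\exp(-\al_0\veps/[cap_\om(K)]^{1/n})$ for a common $\al_0$ obtained by taking the smallest of the chart exponents divided by $A_0^{1/n}$; absorbing the number of charts $|I|+|J|$ into $C$ yields the first displayed inequality. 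The ``in particular'' clause then follows from the purely numerical fact that $\exp(-\al_0\veps/x^{1/n})\le C_\tau(\al_0\veps)^{-n(1+\tau)}x^{1+\tau}$ for all $x>0$ (optimize over $x$, or note $e^{-1/t}\le C_\tau\, t^{1+\tau}$ for $t>0$ with $t=x^{1/n}/(\al_0\veps)$). The main obstacle I anticipate is not any single estimate but the careful comparison of the three notions of ``distance to $\d M$'' — the intrinsic $\om$-distance defining $M_\veps$, the Euclidean distance in each boundary chart, and the defining function $\rho_R$ of Section~\ref{sec:boundary-chart} — and checking that the comparison constants are uniform over the finite atlas so that a single $\al_0$ works; once that is pinned down the rest is bookkeeping with the already-proven inequalities.
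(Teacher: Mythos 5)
Your proposal follows essentially the same route as the paper's proof: cover $\ov M$ by finitely many interior balls and boundary half-balls, turn the hypothesis $\mu\le(\om+dd^c\ul u)^n$ into $\mu\le(dd^c\psi)^n$ for a local H\"older-continuous psh $\psi=\ul u+g$, apply Remark~\ref{rmk:moderate-interior} on interior charts and Theorem~\ref{thm:moderate-boundary} on boundary charts, and glue using the equivalence of $cap_\om$ and $cap'$ from Proposition~\ref{prop:two-capacities}. The only places you add anything beyond the paper's own argument are (i) making explicit the uniform comparison between the intrinsic collar $M_\veps$, the Euclidean distance to $\d M$, and the defining function $\rho_R$ — a point the paper elides when it plugs $K_j\subset M_\veps$ straight into Theorem~\ref{thm:moderate-boundary}, though the comparison $|\rho|\ge c_0\,{\rm dist}$ is used elsewhere in the paper and the constants are indeed uniform over the finite atlas; and (ii) the short derivation of the ``in particular'' polynomial bound from the exponential one, which the paper states without proof. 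Both are correct and do not change the structure of the argument.
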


\begin{proof}  Cover $\ov M$ by  finitely many coordinate balls $B_i(R/2)$ and coordinate half-balls $U_j(R/2)$ with $R>0$ (fixed) so that $B_i(2R)$ and $U_j(2R)$ are still contained in holomorphic charts of $\ov M$. Let $K \subset M_\veps$ be a compact set. 

Consider its part $K_i = K \cap B_i(R/2)$ which is contained in an interior chart $B_i(2R)$. On this chart we can choose a strictly plurisubharmonic function $g$ such that $dd^c g \geq \om$.  Set $\psi = g + \ul u$. Then, $\mu \leq (dd^c\psi)^n$ on $B_i(2R)$. Subtracting a constant we may assume that $\psi \leq 0$ on $B_i(2R)$. Applying Remark~\ref{rmk:moderate-interior} and  Proposition~\ref{prop:two-capacities} we get 
$$\begin{aligned}
	\mu(K_i) 
&\leq C \exp\left(\frac{-\tau_0}{[cap(K_i,B_{R})]^\frac{1}{n}} \right) \\
&\leq C  \exp\left(\frac{-\tau_0}{[A_0cap_\om(K_i)]^\frac{1}{n}} \right) \\
&\leq C  \exp\left(\frac{-\al_0}{[cap_\om(K)]^\frac{1}{n}} \right), 
\end{aligned}$$
where $\al_0 =\tau_0/A_0^\frac{1}{n}$ is a uniform constant and we used $cap_\om(K_i) \leq cap_\om(K)$ in the last inequality. 

Next consider  $K_j = K \cap U_j(R/2)$ which is  contained in a boundary chart $\Om = U_j(2R)$. Similarly as above $\mu \leq (dd^c\psi)^n$ on $\Om$ for a negative H\"older continuous $\psi $ on $\ov \Om$ which is plurisubharmonic in $\Om.$ Now Theorem~\ref{thm:moderate-boundary} and Proposition~\ref{prop:two-capacities} give
$$\begin{aligned}
	\mu(K_j) &\leq \frac{C}{\veps^{n+1}} \exp\left(\frac{-\tau_0 \veps}{[cap(K_j,\Om_R)]^\frac{1}{n}} \right)\\ &\leq \frac{C}{\veps^{n+1}} \exp\left(\frac{-\tau_0 \veps}{[cap'(K)]^\frac{1}{n}} \right) \\
&\leq \frac{C}{\veps^{n+1}} \exp\left(\frac{-\tau_0 \veps}{A_0^\frac{1}{n}[cap_\om(K)]^\frac{1}{n}} \right). \\
\end{aligned}$$
Since $|I \cup J|$ is finite, we conclude 
$$\begin{aligned}
	\mu(K) &\leq \sum_{i\in I} \mu(K_i) + \sum_{j\in J} \mu(K_j) \\&\leq  C _1 \exp\left(\frac{-\al_0}{[cap_\om(K)]^\frac{1}{n}} \right) +  \frac{C_2}{\veps^{n+1}} \exp\left(\frac{-\al_0 \veps}{[cap_\om(K)]^\frac{1}{n}} \right) \\ &\leq   \frac{C}{\veps^{n+1}}  \exp\left(\frac{-\al_0 \veps}{[cap_\om(K)]^\frac{1}{n}} \right).
\end{aligned}$$ 
This is the desired estimate.
\end{proof}

We now fix the notation  to finish  the proof of Theorem~\ref{thm:holder-ss}.
Let $\mu$ be a positive Borel measure on $M$. Suppose that there exists $\ul u \in PSH(M, \om) \cap C^{0,\al}(\ov M)$ with $0<\al \leq 1$,  a H\"older continuous subsolution for $\mu$ on $M$, satisfying  $$\ul u_{|_{\d M}} = \vphi \in C^{0,\al}(\d M).$$
By Theorem~\ref{thm:bounded-ss} and  Corollary~\ref{cor:continuity-dns} there exists a solution $u \in PSH(M, \om) \cap C^0(\ov M)$ solving
\[\label{eq:MA} (\om + dd^c u)^n = \mu, \quad \lim_{z \to q} u(z) = \vphi(q) \quad\text{for every } q \in \d M.
\]

\begin{prop}\label{prop:stability-ext}  Let $u \in PSH(M, \om) \cap L^\infty(\ov M)$ be the solution to \eqref{eq:MA}.  Let $v \in PSH(M,\om)\cap L^\infty(\ov M)$ be such that $v = u$ on $M \setminus M_\veps$. Then there is $0< \al_2 \leq 1$ such that
$$
	\sup_M (v -u) \leq \frac{C}{\veps^{3n+1}} \left( \int_M \max\{v-u,0\} d\mu \right)^{\al_2},
$$
where $C = C (M,\om, \ul u, \|u\|_\infty, \|v\|_\infty)$ and $\al_2 = \al_2(n, \al)>0$ are uniform constants.
\end{prop}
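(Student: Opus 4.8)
The plan is to re-run the stability argument behind Proposition~\ref{prop:L1-stability} (that of \cite[Proposition~2.4]{KN4}) on the collar $M_\veps$, substituting the measure–capacity inequality of Lemma~\ref{lem:moderate} for a global domination hypothesis, while tracking the powers of $\veps$.

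First I would reduce to $v\ge u$ by replacing $v$ with $\max\{u,v\}$: this still equals $u$ on $M\setminus M_\veps$, has the same $\sup_M(v-u)$, and $\int_M(\max\{u,v\}-u)\,d\mu=\int_M\max\{v-u,0\}\,d\mu$. After subtracting a constant and rescaling I may also assume $-1\le u\le v\le 0$, the general case costing only factors depending on $\|u\|_\infty,\|v\|_\infty$ as in \cite{KN4}. Write $s:=\sup_M(v-u)$ and $N:=\int_M(v-u)\,d\mu$; we must bound $s$, and may assume it is not already below the claimed bound. The crucial observation: since $v=u$ on $M\setminus M_\veps$ and $v-u$ is continuous (here $u\in C^0(\ov M)$ by Corollary~\ref{cor:continuity-dns}), the sets $W(t):=\{v-u>s-t\}=\{u<v-s+t\}$ for $0<t<s$, and likewise $U(\te,t):=\{u<(1-\te)v+m(\te)+t\}$ with $m(\te):=\inf_M[u-(1-\te)v]$ and $t+\te<s$, are all relatively compact in $M_\veps$. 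Hence in every estimate below $\mu$ enters only through its restriction to compacts of $M_\veps$, where Lemma~\ref{lem:moderate} (case $\tau=1$) gives $\mu(K)\le C\veps^{-(3n+1)}[cap_\om(K)]^{2}$; equivalently $\om_u^n=\mu$ is dominated there, in the sense of \eqref{eq:dominate}, by the admissible function $h_\veps(x)=c\,\veps^{\,3n+1}x^{\,n}$.

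Next I would assemble three ingredients. (i) Since $u=v$ near $\d M$, Theorem~\ref{thm:kappa} applies to $(u,v)$, and — as its proof uses the domination of $\om_u^n$ only on $U(\te,t)\subset M_\veps$ — its conclusion holds with the function $\kappa_\veps$ attached by \eqref{eq:kappa2} to $h_\veps$; one computes $\kappa_\veps(y)=C_n\,c^{-1/n}\veps^{-(3n+1)/n}y^{1/n}$, so $cap_\om(U(\te,t))\ge C_n^{-n}c\,\veps^{\,3n+1}t^{\,n}$ for $0<t<\te_0$. (ii) On $U(\te,t)$ one has $v-u>s-t-\te$, so $U(\te,t)\subset\{v-u>cs\}$ for a fixed $c\in(0,1)$ once $\te,t$ are small multiples of $s$; together with the companion mass–capacity estimate for the solution underlying Proposition~\ref{prop:L1-stability} (cf. \cite{Ko03}, \cite[\S2]{KN1}), $cap_\om(W(\beta))\le C(\alpha-\beta)^{-n}\mu(W(\alpha))$ for $0<\beta<\alpha<s$, this bounds $cap_\om(U(\te,t))$ from above by $C\,s^{-n-1}\mu(W(\alpha))$ for $\alpha$ comparable to $s$. (iii) Chebyshev gives $\mu(W(\alpha))\le(s-\alpha)^{-1}N$.

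Finally I would combine these, choosing $\te$ a small fixed multiple of $s$ and $t$ just below the resulting $\te_0$ (which is of order $s^{\max\{n,3\}}$): the three displays then collapse into a single inequality $s^{Q}\le C\,\veps^{-(3n+1)}N$ with $Q=Q(n)=n\max\{n,3\}+n+1$, whence, using $\veps<1$, $s\le C\,\veps^{-(3n+1)}N^{1/Q}$, i.e. the assertion with $\al_2=1/Q$ (more generally $\al_2=\al_2(n,\al)>0$). The step I expect to be the main obstacle is the $\veps$-bookkeeping: one must check that every ingredient — the $\kappa$-estimate, the mass–capacity comparison, and the passage between $cap_\om(\cdot)$ on $M$ and the local capacities of Proposition~\ref{prop:two-capacities} — carries constants that are either $\veps$-independent or whose $\veps$-dependence is exactly the single factor $\veps^{-(3n+1)}$ of Lemma~\ref{lem:moderate}, and that the proof of Theorem~\ref{thm:kappa} indeed uses $\mu$ only on the sublevel set $U(\te,t)$, so that the purely local Lemma~\ref{lem:moderate} suffices.
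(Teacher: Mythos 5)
Your proposal takes essentially the same route as the paper: the paper also normalizes, observes that the sublevel sets $U(\te,2t)\subset\{u<v+s_0+\te+2t\}\subset M_\veps$ (so that Lemma~\ref{lem:moderate} with $\tau=1$ supplies the dominance $\mu(K)\le C\veps^{-(3n+1)}[cap_\om(K)]^2$ on the only sets that matter), and then invokes the proof of \cite[Proposition~2.4]{KN4} verbatim with Lemma~\ref{lem:moderate} replacing the global measure--capacity estimate used there. You have simply unpacked the mechanism of that cited proposition (the $\kappa$-estimate of Theorem~\ref{thm:kappa}, the capacity upper bound via the comparison principle, and Chebyshev) and redone the $\veps$-bookkeeping; your exponent $\al_2=1/Q$ differs in value from the paper's $\al_2=1/(1+(n+2)(n+1))$, but since the statement only asserts the existence of some $\al_2>0$ this is immaterial.
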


\begin{proof} Subtracting a  constant and then dividing both sides of the inequality by $(1+\|u\|_\infty + \|v\|_\infty)$ we may assume that $$-1 \leq u, v \leq 0 \quad\text{on } \ov M.$$ 
Let us assume also that $	-s_0 =  \sup_{M} (v -u) >0$, otherwise the statement trivially  follows. 

We will make use of Theorem~\ref{thm:kappa}. There for a given  number  $0<\te <1$ we defined 
$$
\te_0:=	\frac{1}{3}\min\left\{
	\te^n, 
	\frac{\te^3}{16 \rB}, 
	4 (1-\te) \te^n, 
	4 (1-\te)\frac{\te^3}{16 \rB}, |s_0| \right\} ,
$$
 $m(\te) = \inf_{M} [ u - (1-\te) v]$ and $U(\te, t ) := \{ u < (1- \te) v+ m(\te) + t \}$ for $0< t < \te_0$. Then, for $0<\te < |s_0|/3$ and $0< t< \te_0$ we have
$$s_0 + \te + 2t \leq 0.$$
Since $-1 \leq u, v \leq 0$, it is clear that
$$
	s_0 - \te \leq m(\te) \leq s_0.
$$
It follows that
$$
	U(\te, 2t) \subset \{u< v + s_0 + \te + 2t\} \subset M_\veps,
$$
where  in the last inclusion the assumption $v = u$ on $M\setminus M_\veps$ is used.

Now the proof  is identical to the one in \cite[Proposition~2.4]{KN4} except that we need to replace \cite[Lemma~2.6]{KN4} there by
Lemma~\ref{lem:moderate} at
the cost of extra factor $\veps^{-n(2+\tau)-1}$ in the uniform constants.  If we  fix $\tau =1$ in that lemma then 
$$
	\mu(K)  \leq 	 \frac{C_\al [cap(K,\Om)]^{2}}{(\al_0\veps)^{3 n+1}}.  
$$
Therefore as in  \cite[Proposition~2.4]{KN4}  we can now take $$\al_2 = \frac{1}{1+ (n+2) (n+1)}.  \qedhere$$
\end{proof}

The next step is to estimate $L^1(d\mu)$-norm in terms of $L^1(dV_{2n})$-norm. Again this estimate is obtained for functions that are equal outside $M_\veps$.

\begin{lem}\label{lem:holder-L1} Let $u$ be the solution to the equation \eqref{eq:MA}. Let $v \in PSH(M,\om)\cap L^\infty(\ov M)$ be such that $v = u$ on $M \setminus M_\veps$. Then there is $0< \al_3 \leq 1$ such that
$$
\int_M |u-v| d\mu \leq \frac{C}{\veps^n}\left( \int_M |u-v| dV_{2n}\right)^{\al_3},$$
where $C = C (M,\om, \ul u, \|u\|_\infty, \|v\|_\infty)$ and $\al_3 = \al_3(n, \al)>0$ are uniform constants.
\end{lem}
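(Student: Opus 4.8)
The plan is to localize the estimate using the finite covering $\{B_i(R/2)\}_{i\in I}\cup\{U_j(R/2)\}_{j\in J}$ of $\ov M$ fixed above (so that $B_i(2R)$, $U_j(2R)$ are still holomorphic charts), and to apply Lemma~\ref{lem:interior-L1} on the interior charts and Proposition~\ref{prop:boundary-L1} on the boundary charts. Since $|I\cup J|<\infty$, it is enough to bound each of the finitely many pieces $\int_{B_i(R/2)}|u-v|\,d\mu$ and $\int_{U_j(R/2)}|u-v|\,d\mu$ separately, and then sum.

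On a fixed interior chart $B_i(2R)$ one chooses a smooth strictly psh $g$ with $dd^cg\geq\om$ and sets $\psi=g+\ul u-C_i$, so that $\psi\in PSH(B_i(2R))\cap C^{0,\al}(\ov{B_i(2R)})$, $\psi\leq 0$, and $\mu\leq(\om+dd^c\ul u)^n\leq(dd^c\psi)^n$ there. Lemma~\ref{lem:interior-L1} then gives
$$
\int_{B_i(R/2)}|u-v|\,d\mu\leq\int_{B_i(R/2)}|u-v|\,(dd^c\psi)^n\leq C\,\|u-v\|_{L^1(B_i(R))}^{\al_1}\leq C\,\|u-v\|_{L^1(M)}^{\al_1},
$$
where in the last step one uses that on the fixed chart $dV_{2n}$ is comparable to the volume form of $\om$; note that on interior charts the factor $\veps^{-n}$ is not even needed. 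On a boundary chart $\Om=U_j(2R)$ one produces analogously $\psi\in PSH(\Om)\cap C^{0,\al}(\ov\Om)$, $\psi\leq 0$, with $\mu\leq(dd^c\psi)^n$, which is the setting of \eqref{eq:holder-psh-subsolution}. The key point is to feed Proposition~\ref{prop:boundary-L1} the correct value of its parameter: since $v=u$ on $M\setminus M_\veps$, i.e. wherever ${\rm dist}_\om(\cdot,\d M)\leq\veps$, and — exactly as in the proof of Lemma~\ref{lem:boundary-L1} — the Euclidean defining function $\rho$ is comparable to ${\rm dist}_\om(\cdot,\d M)$ near $\{\rho=0\}$, there is a uniform $c_1>0$ such that $u=v$ on $\{z\in\Om:\rho(z)\geq -c_1\veps\}$. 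Applying Proposition~\ref{prop:boundary-L1} with $\veps$ replaced by $c_1\veps$ yields
$$
\int_{U_j(R/2)}|u-v|\,d\mu\leq\int_{\Om\cap B_{R/2}}|u-v|\,(dd^c\psi)^n\leq\frac{C}{\veps^{n}}\,\|u-v\|_{L^1(\Om_R)}^{\al_1}\leq\frac{C}{\veps^{n}}\,\|u-v\|_{L^1(M)}^{\al_1}.
$$

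Summing the finitely many contributions and taking $\al_3$ to be the smallest of the (dimension-- and $\al$--dependent) exponents $\al_1$ appearing, one obtains
$$
\int_M|u-v|\,d\mu\leq\frac{C}{\veps^{n}}\Big(\int_M|u-v|\,dV_{2n}\Big)^{\al_3},
$$
after absorbing a factor $\mathrm{Vol}_\om(M)^{\al_1-\al_3}$ into $C$ (legitimate since $\|u-v\|_{L^1(M)}$ is bounded). The only genuinely delicate point is the passage from the global hypothesis ``$v=u$ off $M_\veps$'' to the chart-wise hypothesis ``$u=v$ on $\{\rho\geq -c_1\veps\}$'' required by Proposition~\ref{prop:boundary-L1}; this rests on the comparability of the Euclidean defining function with the Riemannian distance to $\d M$ in a boundary chart, which is the same estimate already used for Lemma~\ref{lem:boundary-L1}. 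Everything else is routine: subtracting constants to normalize $u,v,\psi\leq0$, absorbing the comparison of $dV_{2n}$ with $\om^n$ into the constants, and tracking that the resulting exponent $\al_3$ depends only on $n$ and $\al$.
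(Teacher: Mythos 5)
Your proposal is correct and follows essentially the same route as the paper: cover $\ov M$ by the fixed finite family of interior balls and boundary half-balls, on each chart set $\psi=\ul u+g$ (normalized $\leq 0$) so that $\mu\leq(dd^c\psi)^n$ with $\psi$ Hölder continuous plurisubharmonic, then apply Lemma~\ref{lem:interior-L1} on interior charts and Proposition~\ref{prop:boundary-L1} on boundary charts, the latter after converting the hypothesis $v=u$ on $M\setminus M_\veps$ into $u=v$ on $\{\rho\geq -c_1\veps\}$ via the comparability $|\rho(z)|\geq c_0\,{\rm dist}_\om(z,\d M)$, and finally sum over the finitely many charts. The only cosmetic difference from the paper's argument is that you make explicit the normalization by $C_i$ and the adjustment of the exponent to the minimum $\al_3$ over the charts, both of which the paper leaves implicit.
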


\begin{proof} As usual we cover $\ov M$ by a finite number of coordinate balls and  half-balls of radius $R/2$ so that the ones with radius $2R$ are still contained in holomorphic charts. 
On a local coordinate chart $V$ consider   a strictly plurisubharmonic function $g \leq 0$  such that
$$	dd^c g \geq \om.
$$
Then we write $u' = u+g$, $v' = v+g$ and $\psi = \ul u +g$. They are plurisubharmonic functions on $V$. Moreover, 
$$
	\mu  \leq (\om+dd^c \ul u)^n \leq (dd^c \psi)^n \quad\text{on } V,
$$
where $\psi \in C^{0,\al}(\ov V)$ is a H\"older continuous plurisubharmonic function on $V$.

On an interior chart $B_{2R}$ of $\ov M$ by Lemma~\ref{lem:interior-L1} we have
$$
	\int_{B_{R/2}}|u-v| d\mu \leq \int_{B_{R/2}} |u'-v'| (dd^c \psi)^n \leq C\left( \int_{B_R}|u'-v'| dV_{2n} \right)^{\al'}.
$$
This is bounded by 
$
	C \left( \int_M |u-v| dV_{2n} \right)^{\al'}.
$

We now consider the case of a boundary chart $\Om$. 
Let $\rho$ be is the defining function of $\d M$ on $\Om$ as in Section~\ref{sec:boundary-chart}. Then, there exists a uniform constant $c_0>0$ such that
$$
	|\rho(z)| \geq c_0 {\rm dist}_\om(z, \d M) \quad \text{for all } z\in \Om
$$
(shrinking $\Om$ if necessary so that ${\rm dist}_\om$ is a smooth function in $\Om$).
Hence $u=v$ on $\{|\rho(z)| \leq c_0\veps\}$ by the assumption. Without loss of generality we may assume $c_0 =1$. The conclusion follows from  Proposition~\ref{prop:boundary-L1}. 

Since the covering is finite, the proof of the lemma follows.
\end{proof}

We now proceed to find   the H\"older exponent of  the solution $u$ of the equation \eqref{eq:MA} over $\ov M$. By subtracting a uniform constant we may assume that 
$$	u \leq 0 \quad\text{on }\ov M.
$$

Let $\de >0$ be small. For $z \in M_\de$ we define
\[\label{eq:u-delta-max}
	\wh u_\de (z) = \sup \{u(x): x\in M \quad \text{and} \quad{\rm dist}_\om(x, z) \leq \de\}.
\]
where ${\rm dist}_\om$  is the Riemannian distance induced by the metric $\om$. 
We wish to show that there exist constants $c_0>0, \de_0>0$ and an exponent $0<\tau \leq 1$  such that for every $0<\de \leq \de_0$,
\[\label{eq:holder-norm}
	\sup_{\ov M_\de} (\wh u_\de - u) = \sup_{M_\de} (\wh u_\de - u) \leq c_0 \de^{\tau}.
\]

To do it let us fix a small constant $\de_0>0$ such that for every $0< \de < \de_0$,
$
	\sup_{M_\de} (\ul u_\de - \ul u) \leq C \de^{\al}
$
 (for which we use H\"older continuity of the subsolution). Now we consider two parameters $\de, \veps$ such that 
\[ \label{eq:parameters} 0< \de \leq \veps < \de_0.
\]

By a classical argument we obtain the following estimate on the $(\veps,\de)$-collars near the boundary $\d M$. 

\begin{lem} \label{lem:boundary-value} Consider $0< \de \leq \veps < \de_0$. There is $0< \tau_1 \leq 1$ such that for $z \in \ov{M_\de}\setminus M_\veps$, 
$$
	\wh u_\de(z) \leq u(z) + c_1 \veps^{\tau_1},
$$
where $\tau_1$ depends only on $M, \om$ and  $\al $ the H\"older exponent of $\ul u$.
Moreover, for $z \in M$ and $q \in \d M$ with ${\rm dist}_\om(z, q) \leq \de$ we have
$$|u(z) - u(q)| \leq c_1 \de^{\tau_1}.$$
\end{lem}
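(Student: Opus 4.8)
The plan is to sandwich the solution $u$ between two H\"older continuous functions with the same boundary values. Recall from the proof of Theorem~\ref{thm:bounded-ss} that every $w \in \cB(\vphi,\mu)$ — in particular the solution $u$ — satisfies $\ul u \le w \le h_1$ on $\ov M$, where $h_1$ solves the linear problem \eqref{eq:omega-laplace}, and that $\ul u = h_1 = \vphi$ on $\d M$ (so in particular $u$ extends continuously to $\ov M$ with $u|_{\d M}=\vphi$). The first step is to upgrade the regularity of the upper barrier $h_1$: in a boundary chart, \eqref{eq:omega-laplace} is a uniformly elliptic linear equation $a^{ij}\d_i\d_j h_1 = f$ with smooth coefficients and smooth right-hand side on a $C^\infty$ domain, with boundary data $\vphi \in C^{0,\al}(\d M)$. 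The classical boundary H\"older estimate for such equations then gives $h_1 \in C^{0,\al'}(\ov M)$ for some $0 < \al' \le \al$ depending only on $M$, $\om$ and $\al$. Put $\tau_1 := \min\{\al,\al'\}$ and let $L$ be a common H\"older constant, with respect to ${\rm dist}_\om$, of $\ul u$ and $h_1$ on $\ov M$.

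With this in hand I would first prove the modulus-of-continuity estimate at the boundary, in fact in the global form: for all $z \in M$ and $q \in \d M$,
$$|u(z) - u(q)| \le L\,{\rm dist}_\om(z,q)^{\tau_1}.$$
Indeed, since $u(q) = \vphi(q) = \ul u(q) = h_1(q)$ and $\ul u(z) \le u(z) \le h_1(z)$, the quantity $u(z) - u(q)$ lies between $\ul u(z) - \ul u(q)$ and $h_1(z) - h_1(q)$, and both of these are bounded in absolute value by $L\,{\rm dist}_\om(z,q)^{\tau_1}$ by the choice of $L$ and $\tau_1$. Specializing to ${\rm dist}_\om(z,q) \le \de$ and taking $c_1 \ge L$ yields the ``Moreover'' part.

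For the first inequality, let $z \in \ov{M_\de}\setminus M_\veps$, so that $\de \le {\rm dist}_\om(z,\d M) \le \veps$, and pick $q \in \d M$ with ${\rm dist}_\om(z,q) = {\rm dist}_\om(z,\d M)$ (possible since $\d M$ is compact). For any $x \in M$ with ${\rm dist}_\om(x,z) \le \de \le \veps$ the triangle inequality gives ${\rm dist}_\om(x,q) \le 2\veps$, so by the global estimate above and $\tau_1 \le 1$,
$$u(x) \le u(q) + L(2\veps)^{\tau_1} \le u(q) + 2L\veps^{\tau_1} \le u(z) + 3L\veps^{\tau_1}.$$
Taking the supremum over all such $x$ gives $\wh u_\de(z) \le u(z) + 3L\veps^{\tau_1}$, so any $c_1 \ge 3L$ works and serves in both statements.

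The only ingredient that is not entirely elementary is the global H\"older continuity of $h_1$, and this is the step I would be most careful about — but it is a direct application of standard boundary regularity for linear uniformly elliptic equations on the smooth manifold $\ov M$. If one prefers to stay within pluripotential theory, one can instead build, in each boundary chart, an explicit upper barrier of the form $\vphi(q) + C|z-q|^\al - A\rho$ with $A$ large and $\rho$ the defining function, and compare via the maximum principle; this yields the same conclusion with an exponent $\tau_1$ depending only on $M$, $\om$ and $\al$. Everything else is just the sandwich $\ul u \le u \le h_1$ together with triangle-inequality bookkeeping.
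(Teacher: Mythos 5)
Your argument is correct and essentially the same as the paper's: both rely on the sandwich $\ul u \le u \le h_1$ with $h_1$ the solution of the linear equation $(\om+dd^c h_1)\wedge\om^{n-1}=0$, $h_1|_{\d M}=\vphi$, together with the boundary H\"older regularity of $h_1$ (the paper cites Jost--Yau and Schoen--Yau for this; you invoke the classical boundary estimate for linear uniformly elliptic equations, which is the same fact) and then a triangle-inequality bookkeeping using a nearest boundary point. The only organizational difference is that you establish the ``Moreover'' estimate first and deduce the main inequality from it, while the paper proves the first inequality directly at a point $z_\de$ realizing $\wh u_\de(z)$ and notes that the second follows similarly; this buys nothing essential either way.
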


\begin{proof}Let $h_1 \in C^0(\ov{M}, \bR)$ be the unique solution to the linear PDE:
\[\label{eq:omega-laplace}\begin{aligned}
	(\om + dd^c h_1) \wed \om^{n-1} =0, \\
	h_1 = \vphi \quad\text{on } \d M.
\end{aligned}
\]
Note that $(\om +dd^c u)\wed \om^{n-1} \geq 0$. The maximum principle for the Laplace operator with respect to $\om$  gives $u \leq h_1.$ It is classical fact from \cite[Theorem~6]{JY93} (see also \cite[Theorem~5.3]{SY}) that $h_1$ is also H\"older continuous on $\ov M$:   
$$
	h_1 \in C^{0, \tau_1}(\ov M),
$$
with $0<\tau_1 \leq \al$ (decreasing $\tau_1$ if necessary).
Fix $z \in \ov{M_\de}\setminus M_\veps$. Since $u$ is continuous, there exists a point $z_\de \in M$ with ${\rm dist}_\om(z_\de, z) \leq \de$ such that $u(z_\de) = \wh u_\de(z)$. Let $q \in \d M$ be the point that is closest to $z$, i.e., ${\rm dist}_\om(z, q) \leq \veps$. By the fact that $\ul u \leq u \leq h_1$ on $M$ we have $$\wh u_\de(z) - u(z) = u(z_\de) - u(z) \leq h_1(z_\de) - \ul u(z).$$  Since $h_1(q) = \vphi(q) = \ul u(q)$, it follows that
$$\begin{aligned}
	h_1(z_\de) - \ul u(z) 
&= (h_1(z_\de) - h_1(q)) + (\ul u(q) - \ul u(z))  \\
&\leq (h_1(z_\de) - h_1(z)) + (h_1(z) - h_1(q)) + (\ul u(q) - \ul u(z))\\
&\leq C (\de^{\tau_1} + \veps^{\tau_1}),
\end{aligned}$$
where $C$ is maximum of  the H\"older norms of $h_1$ and $\ul u$ on $\ov M$. Because $\de \leq \veps$,
the proof of the first inequality is completed.
The remaining inequality  follows from a similar argument.
\end{proof}

\begin{remark} The constants $c_1 , \tau _1$ in the last lemma are independent of parameters $\de, \veps$. This applies also to all uniform constants appearing in the  estimates that follow.
\end{remark}

\begin{cor}\label{cor:holder-sufficient}  The H\"older continuity of $u$ over $\ov M$ will follow once we prove  \eqref{eq:holder-norm}.
\end{cor}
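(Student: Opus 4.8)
The plan is to observe that the interior bound \eqref{eq:holder-norm} already encodes the modulus of continuity of $u$ away from $\d M$, while the second assertion of Lemma~\ref{lem:boundary-value} together with the H\"older continuity of the boundary data $\vphi=\ul u_{|_{\d M}}$ encodes it in a collar of $\d M$; the corollary then follows by a soft patching argument. Write $\tau'$ for the exponent in \eqref{eq:holder-norm} and $\tau_1\le\al$ for the one in Lemma~\ref{lem:boundary-value}, and set $\tau=\min\{\tau',\tau_1\}$; shrinking $\de_0$ we may also assume $2\de_0<1$. The goal is to show $u\in C^{0,\tau}(\ov M)$.

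First I would note that \eqref{eq:holder-norm} is in fact two-sided on interior balls: if $x,y\in M_\de$ with ${\rm dist}_\om(x,y)\le\de$, then $u(y)\le\wh u_\de(x)$ and $u(x)\le\wh u_\de(y)$, hence $|u(x)-u(y)|\le\sup_{M_\de}(\wh u_\de-u)\le c_0\de^{\tau'}$. Now fix arbitrary $x,y\in\ov M$ and put $t={\rm dist}_\om(x,y)$. If $t\ge\de_0/2$, then $|u(x)-u(y)|\le 2\|u\|_\infty\le 2\|u\|_\infty(\de_0/2)^{-\tau}\,t^{\tau}$ is trivial since $u$ is bounded, so assume $0<t<\de_0/2$ and, relabelling if necessary, ${\rm dist}_\om(x,\d M)\le{\rm dist}_\om(y,\d M)$. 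If ${\rm dist}_\om(x,\d M)>t$, then both $x,y\in M_t$, and the previous observation with $\de=t$ gives $|u(x)-u(y)|\le c_0\,t^{\tau'}$.

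It remains to treat the case ${\rm dist}_\om(x,\d M)\le t$. I would choose nearest boundary points $q,q'\in\d M$ to $x$ and $y$; then ${\rm dist}_\om(x,q)\le t$, ${\rm dist}_\om(y,q')\le{\rm dist}_\om(y,x)+{\rm dist}_\om(x,\d M)\le 2t<\de_0$, and ${\rm dist}_\om(q,q')\le 4t<1$. Lemma~\ref{lem:boundary-value} bounds $|u(x)-u(q)|$ and $|u(y)-u(q')|$ by $c_1(2t)^{\tau_1}$, while $|u(q)-u(q')|=|\vphi(q)-\vphi(q')|\le C\,{\rm dist}_\om(q,q')^{\al}\le C\,(4t)^{\tau_1}$, where I use $\tau_1\le\al$, the bound ${\rm dist}_\om(q,q')<1$, and the comparability of the intrinsic distance on the smooth compact submanifold $\d M$ with the restriction of ${\rm dist}_\om$. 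A triangle inequality then yields $|u(x)-u(y)|\le C\,t^{\tau_1}$. Combined with the interior case this gives $|u(x)-u(y)|\le C\,{\rm dist}_\om(x,y)^{\tau}$ for all $x,y\in\ov M$, i.e. $u\in C^{0,\tau}(\ov M)$; since $u$ is the solution produced in Theorem~\ref{thm:bounded-ss}, this completes the proof of Theorem~\ref{thm:holder-ss}.

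I do not expect a genuine obstacle here: the argument is a routine gluing of \eqref{eq:holder-norm} with the boundary estimate of Lemma~\ref{lem:boundary-value}. The only points requiring mild care are the geometry of the transition region — controlling ${\rm dist}_\om(q,q')$ by ${\rm dist}_\om(x,y)$ so that the H\"older continuity of $\vphi$ on $\d M$ applies — and the bookkeeping of exponents, which collapse to $\min\{\tau',\tau_1\}$ thanks to $s^{\beta}\le s^{\tau}$ for $0<s<1$ and $\beta\ge\tau$. Of course this reduction is only useful once \eqref{eq:holder-norm} itself is verified, which is the content of the remainder of this section.
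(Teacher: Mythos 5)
Your proposal is correct and follows essentially the same strategy as the paper: split $x,y\in\ov M$ according to their distance to $\d M$, invoke \eqref{eq:holder-norm} when both points lie in $M_t$, and otherwise pass to nearest boundary points and combine Lemma~\ref{lem:boundary-value} with the H\"older continuity of $\vphi=\ul u|_{\d M}$. Your two-case split merges the paper's Cases~1 and~3 into one, which sidesteps the intermediate point $p\in\d M_\de$ and the ``regular value'' remark, and you are a bit more explicit than the paper in recording the final exponent as $\tau=\min\{\tau',\tau_1\}$.
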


\begin{proof}
We need to justify that for every $x, y \in \ov M$,
$$
	|u(x) - u(y)| \leq C {\rm dist}_\om(x,y)^{\tau},
$$
where $\tau = \tau _1$ in Lemma~\ref{lem:boundary-value}. We may assume that $0< {\rm dist}_\om(x,y) = \de \leq \de_0$,  and ${\rm dist}_\om(x,\d M) = \de_x \leq \de_y = {\rm dist}_\om(y,\d M)$. 

{\bf Case 1:} $\de_y \leq \de$. Then there exist $q_x, q_y \in \d M$ such that ${\rm dist}_\om(x,q_x) \leq \de$ and ${\rm dist}_\om(y,q_y) \leq \de$.  Since  ${\rm dist}_\om(q_x, q_y) \leq 3 \de$, by the H\"older continuity of $\ul u$ we have $|u(q_x) - u(q_y)| = |\ul u(q_x)-\ul u(q_y)| \leq C \de^{\tau}$. It follows from Lemma~\ref{lem:boundary-value} that
$$\begin{aligned}
	|u(x) - u(y)| &\leq |u(x) - u(q_x)| + |u(y) - u(q_y)| + |u(q_x)-u(q_y)| \\&\leq C{\rm dist}_\om(x,y)^{\tau}. 
\end{aligned}
$$

{\bf Case 2:} $\de_x \geq \de$.  Then, $u(y) \leq \wh u_\de(x)$ and $u(x) \leq\wh u_\de(y)$. Therefore, $u(x) - u(y) \leq \wh u_\de (y) - u(y) \leq C \de^\tau$ by \eqref{eq:holder-norm}. Similarly, $u(y) - u(x) \leq C \de^\tau$.

{\bf Case 3:} $\de_x < \de < \de_y$. Without loss of generality we may assume that $\de$ is a regular value of ${\rm dist}_\om(\bullet, \d M)$; that means: there exists a point $p \in \d M_\de$, which is the intersection of the shortest path joining $x, y$ and $\d M_\de$, such that $$ \max\{{\rm dist}_\om(x,p), {\rm dist}_\om(y,p)\} \leq \de.$$ Hence,
$$
	|u(x) - u(y)| \leq |u(x) - u(p)| + |u(y) -u(p)| \leq C{\rm dist}_\om(x,y)^{\tau},
$$
where we used Case 1 and Case 2  for the  last inequality.
\end{proof}

Now we use the global regularization of a quasi-plurisubharmonic function due to Demailly \cite{De94} which provides a lower bound on the complex Hessian.

Let $u$ be the continuous solution to the equation~\eqref{eq:MA}. Consider 
\[\label{eq:phie}
\rho_\delta u(z)=\frac{1}{\delta ^{2n}}\int_{\zeta\in T_{z}M}
u({\exp} h_z(\zeta))\chi\Big(\frac{|\zeta|^2_{\omega }}{\delta ^2}\Big)\,dV_{\omega}(\zeta), \quad z\in M_\de, \ \delta>0;
\]
where $\zeta \to {\exp}h_z(\zeta)$ is the (formal) holomorphic part of the Taylor expansion of the exponential map of  the Chern connection on the tangent bundle of $M$ associated to $\omega $, and the mollifier 
 $\chi: \mathbb R_{+}\rightarrow\mathbb R_{+}$  is given by \eqref{eq:molifier} above. 

By \cite[Remark~4.6]{De94} we have for $0< t < \de_0$ small
\[\label{eq:reg-loc} 
\begin{aligned}
	\rho_t u (z) 
&= 	\int_{|x|<1} u(z + t x) \chi (|x|^2) dV_{2n}(x) + O(t^2) \\
&=	u * \chi_t(z) + O(t^2)
\end{aligned}\]
in the normal coordinate system centered at $z$ (see \cite[Proposition~2.9]{De94}). 
Since the metric $\om$ is smooth on $\ov M$, the distance function satisfies
$${\rm dist}_\om(z, x) = |z-x| + O(|z-x|^2)$$  as $x\to z.$
 It follows that for $0< \de \leq \de_0 $ ($\de_0$ small enough)
\[\label{eq:regularization-supremum}
	\rho_\de u(z) \leq \sup_{|z-x| \leq \de} u(x) + C \de^2 \leq  \wh u_{2\de} (z) + c_2 \de \quad \text{for every } z \in \ov{M_\de}.
\]

We observe that to prove the H\"older continuity it is enough to work with this geodesic convolution.
\begin{lem}\label{lem:equiv-holder} Suppose that there exist constants $ 0< \tau \leq 1$ and $C>0$ such that for $0< \de \leq \de_0$,
\[
	\rho_{\de} u(z) - u(z) \leq C \de^{\tau} \quad\text{for every } z\in M_\de.
\]
Then  $u$  is H\"older continuous on $\ov M$.
\end{lem}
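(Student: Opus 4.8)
The plan is to deduce the statement from Corollary~\ref{cor:holder-sufficient}, so that it suffices to prove the estimate \eqref{eq:holder-norm}, namely $\sup_{M_\de}(\wh u_\de - u)\le c_0\de^{\tau'}$ for all small $\de>0$ and some exponent $0<\tau'\le1$. Fix a large constant $K$ (its value will be dictated below by the metric distortion and by the comparison between balls and convolutions). First I would split $M_\de=(M_\de\setminus M_{K\de})\cup M_{K\de}$. On the collar $M_\de\setminus M_{K\de}$ there is nothing new to prove: Lemma~\ref{lem:boundary-value} applied with $\veps=K\de$ gives $\wh u_\de(z)-u(z)\le c_1(K\de)^{\tau_1}\le C\de^{\tau_1}$ for every such $z$. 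Thus the whole problem is reduced to bounding $\wh u_\de(z)-u(z)$ for $z$ in the interior region $M_{K\de}$.

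For such an interior point $z$ I would work in the normal holomorphic coordinate system centred at $z$ provided by Demailly's construction, and write $w=u+g$, where $g$ is smooth with $dd^c g\ge\om$ near $z$ and (after subtracting a constant) $w\le0$ there; by compactness of $\ov M$ and smoothness of $\om$ all the ensuing constants are uniform in $z$. Since the metric distortion is $O(\de)$, the set $\{{\rm dist}_\om(x,z)\le\de\}$ lies in the Euclidean ball $B_{2\de}(z)$ in these coordinates, and the Lipschitz bound on $g$ gives $\wh u_\de(z)\le\sup_{B_{2\de}(z)}u\le\sup_{B_{2\de}(z)}w-g(z)+C\de$. The crucial step is to bound a supremum over a ball by an average over a slightly larger concentric ball (the inequality opposite in direction to the one behind \eqref{eq:regularization-supremum}), namely $\sup_{B_{2\de}(z)}w-w(z)\le C\bigl(w*\chi_{C\de}(z)-w(z)\bigr)$, which I would derive from the sub-mean value inequality for $w$ together with the equivalences \eqref{lem:cov-reg-compare}--\eqref{eq:cov-reg2} between the ball averages $\cw w_\bullet$ and the convolutions $w*\chi_\bullet$. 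Since $g$ is smooth, $w*\chi_{C\de}(z)-w(z)$ equals $u*\chi_{C\de}(z)-u(z)$ up to $O(\de^2)$, and Demailly's local expansion \eqref{eq:reg-loc} identifies $u*\chi_{C\de}(z)$ with $\rho_{C\de}u(z)$ up to $O(\de^2)$ in exactly this coordinate system. Hence $\wh u_\de(z)-u(z)\le C\bigl(\rho_{C\de}u(z)-u(z)\bigr)+C\de$, and — provided $K$ has been chosen large enough that $z\in M_{C\de}$ and that $C\de\le\de_0$ — the standing hypothesis yields $\rho_{C\de}u(z)-u(z)\le C(C\de)^{\tau}$, so $\wh u_\de(z)-u(z)\le C\de^{\tau}+C\de\le c_0\de^{\tau}$, the linear term being harmless because $\tau\le1$.

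Combining the two cases establishes \eqref{eq:holder-norm} with exponent $\tau'=\min\{\tau,\tau_1\}$, after shrinking $\de_0$ so that $K\de\le\de_0$ and all the balls in play remain inside coordinate charts, and Corollary~\ref{cor:holder-sufficient} then delivers the H\"older continuity of $u$ on $\ov M$. The genuinely delicate step will be the interior comparison $\sup_{B_{2\de}(z)}w-w(z)\le C\bigl(w*\chi_{C\de}(z)-w(z)\bigr)$ — controlling, for a negative plurisubharmonic function, a supremum over a ball by an average over a larger concentric ball, i.e. precisely the direction opposite to \eqref{eq:regularization-supremum} — together with the bookkeeping that ensures every smooth error term ($O(\de)$ from the Lipschitz bound on $g$ and from the metric distortion, $O(\de^2)$ from \eqref{eq:reg-loc}) stays dominated by $\de^{\tau'}$; the boundary collar, by contrast, is handled outright by Lemma~\ref{lem:boundary-value}.
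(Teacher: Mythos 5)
Your overall framework — reducing to \eqref{eq:holder-norm} via Corollary~\ref{cor:holder-sufficient}, dispatching the collar $M_\de\setminus M_{K\de}$ with Lemma~\ref{lem:boundary-value}, and converting the hypothesis on $\rho_\de u$ into a bound on $u*\chi_{C\de}-u$ via \eqref{eq:reg-loc} — is sound and matches the paper's strategy. But the ``genuinely delicate step'' you isolate, namely the pointwise inequality
\[
\sup_{B_{2\de}(z)} w - w(z) \;\leq\; C\bigl(w*\chi_{C\de}(z) - w(z)\bigr)
\]
for negative plurisubharmonic $w$, is simply false, and no amount of invoking the sub-mean value property or the equivalences \eqref{lem:cov-reg-compare}--\eqref{eq:cov-reg2} will produce it. Take $w(\zeta)=\operatorname{Re}\zeta_1 - 1$ on a small ball and $z=0$: then $w$ is pluriharmonic so the right-hand side is $0$ by the mean value property, while the left-hand side is $2\de>0$. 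The obstruction is structural: the right-hand side depends only on the behaviour of $w$ at the single point $z$ and vanishes whenever $\De w=0$ near $z$, whereas the left-hand side measures oscillation and need not vanish.

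What actually makes the implication work (and what the paper invokes from \cite[Lemma~4.2]{GKZ08} and \cite[Theorem~3.2]{Z21}) is the \emph{uniform} validity of the average bound: one has $\cw u_{\de/2}(w)-u(w)\leq C\de^\tau$ for every centre $w$ in a whole neighbourhood, not just at the fixed point $z$. The GKZ/Zeriahi argument turns this uniform control over averages into Hölder continuity by an interpolation-type argument using the smooth approximants $u*\chi_\de$ and their gradient estimates; this is not captured by any single-point sub-mean-value inequality. You should replace the flawed interior step by this uniform argument --- i.e.\ first record that \eqref{eq:reg-loc} together with \eqref{eq:cov-reg2} upgrades the hypothesis to $\sup_{M_\de}(\cw u_{\de/2}-u)\leq C\de^\tau$, and then apply the cited lemma on each interior coordinate chart; the collar part of your argument can stay as written.
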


\begin{proof} By Corollary~\ref{cor:holder-sufficient} it is enough to verify the inequality \eqref{eq:holder-norm}, i.e, $$\sup_{M_\de} (\wh u_\de -u) \leq C' \de^\tau.$$ First, it follows from  \eqref{eq:reg-loc} that in normal coordinates containing ball of radius $\de$ we have $u *\chi_\de - u \leq C \de^\tau$. Therefore, \eqref{eq:cov-reg2} implies  $\cw u_{\de/2}  -u \leq C \de^\tau.$ Now, the proof of the required inequality
follows the  lines of   argument given in \cite[Lemma~4.2]{GKZ08} (see also \cite[Theorem~3.2]{Z21}).
\end{proof}

Now let us state the important estimate for the complex Hessian of $\rho_\de u$.
The proof of the following variation of \cite[Proposition~3.8]{De94} and \cite[Lemma 1.12]{BD12} was given in \cite[Lemma~4.1]{KN2}.

\begin{lem}\label{lem:kis}
 Let $0<\de < \de_0$ and $\rho_tu$ be as in \eqref{eq:phie}. 
 Define the Kiselman-Legendre transform with level $b>0$ by
 \begin{equation}\label{kisleg}
 u_{\delta , b } (z)= \inf _{ t\in [0,\delta ]}\left(\rho_{t }u (z)+ Kt^2  + Kt -b \log\frac{t}{\delta } \right),
 \end{equation}
Then for  some positive constant $K$  depending on the curvature, the function $\rho_{t } u+Kt^2$ is increasing in $t$ and
 the following estimate holds:
\begin{equation}\label{hessest}
\omega+dd^c  u_{\delta,b }\geq -(A b+2K\delta)\,\omega \quad \text{on } \ov{M_\de}.
\end{equation}
where $A$ is a lower bound of the negative part of the Chern curvature of $\omega$.
\end{lem}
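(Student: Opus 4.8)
The plan is to deduce the Hessian estimate \eqref{hessest} from Demailly's differential-geometric analysis of the geodesic convolution \eqref{eq:phie} combined with Kiselman's minimum principle, in the spirit of \cite[Proposition~3.8, Remark~4.6]{De94}, \cite[Lemma~1.12]{BD12} and \cite[Lemma~4.1]{KN2}. A preliminary remark is that the whole argument is interior: $\rho_tu(z)$ depends only on the restriction of $u$ to the geodesic ball $B_\om(z,t)$, which is compactly contained in $M$ as soon as $z\in\ov{M_\de}$ and $0\leq t\leq\de$, so $\d M$ plays no role and the estimates proved on compact Hermitian manifolds apply on $\ov{M_\de}$ without change.

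First I would record two basic properties of \eqref{eq:phie}, with the convention $\rho_0u:=u=\lim_{t\to0^+}\rho_tu$. By Demailly's computations there is a constant $K>0$, depending only on a bound for the curvature (and torsion) of the Chern connection of $\om$, such that: (i) for each fixed $z$ the function $t\mapsto\rho_tu(z)+Kt^2$ is non-decreasing on $[0,\de_0)$ — which is precisely the monotonicity claim of the lemma; and (ii) introducing an auxiliary complex variable $\ze$ and setting $t=|\ze|$, the function
$$
\Psi(z,\ze):=\rho_{|\ze|}u(z)+K|\ze|^2+K|\ze|
$$
becomes, after adding a fixed smooth correction that absorbs the curvature defect, plurisubharmonic in $(z,\ze)$ on $\{z\in\ov{M_\de},\ 0<|\ze|<\de\}$ up to an error of size $-(2K\de+A|\ze|)\,\om$ in the $z$-directions; moreover $\Psi$ is radial in $\ze$ and, thanks to (i) and to $\om+dd^cu\geq0$, convex and non-decreasing in $\log|\ze|$.

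Next I would bring in Kiselman's minimum principle. The extra term $-b\log(t/\de)=-b\log|\ze|+b\log\de$ is pluriharmonic in $\ze$ and affine in $\log|\ze|$, so adding it to $\Psi$ preserves all the structure above, and since it tends to $+\infty$ as $t\to0^+$ it forces the infimum in \eqref{kisleg} to be attained, for each $z$, at some $t(z)\in(0,\de]$. Differentiating \eqref{kisleg} in $t$ and using $\d_t\rho_tu\geq-2Kt$ from (i), at the minimizer one obtains $K-b/t(z)\leq0$ (or else $t(z)=\de$), which after normalizing $K\geq1$ gives $A\,t(z)\leq Ab$ in either case. Applying Kiselman's minimum principle (in the form used by Demailly) to $\Phi(z,\ze):=\Psi(z,\ze)-b\log(|\ze|/\de)$ then shows that $u_{\de,b}(z)=\inf_\ze\Phi(z,\ze)$ is quasi-plurisubharmonic on $\ov{M_\de}$, with a $z$-Hessian lower bound obtained from that of $\Phi$ evaluated at $|\ze|=t(z)$: the $K|\ze|^2$ term together with the constraint $|\ze|\leq\de$ accounts for the $-2K\de\,\om$ part, while the curvature error $-A|\ze|\,\om$ becomes $-A\,t(z)\,\om\geq-Ab\,\om$ by the truncation. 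Putting these together yields $\om+dd^cu_{\de,b}\geq-(Ab+2K\de)\,\om$ on $\ov{M_\de}$.

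\textbf{The main obstacle} is step (ii): establishing Demailly's sharp second-order estimate for the geodesic convolution on a Hermitian manifold, that is, controlling the complex Hessian of $z\mapsto\rho_{|\ze|}u(z)$ — including the mixed derivatives in $z$ and $\ze$ — by the curvature and torsion of the Chern connection, absorbing them into $K|\ze|^2+K|\ze|$ by a completion-of-squares argument, and then disentangling the part of the resulting error that is $O(\de)$ (and survives as $2K\de$) from the part that is $O(|\ze|)$ (and is improved to $Ab$ by the Kiselman--Legendre truncation). This is carried out in \cite[Lemma~4.1]{KN2} (a variant of \cite[Proposition~3.8]{De94} and \cite[Lemma~1.12]{BD12}), so in practice one simply quotes it; the only point to check in our setting is the interior remark above, which makes that proof applicable verbatim on $\ov{M_\de}$.
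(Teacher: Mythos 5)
Your proposal matches the paper's treatment, which supplies no proof of this lemma and simply cites \cite[Lemma~4.1]{KN2} (itself a variant of \cite[Proposition~3.8]{De94} and \cite[Lemma~1.12]{BD12}); you correctly identify this and, crucially, record the one verification actually needed here, namely that $\rho_tu(z)$ for $z\in\ov{M_\de}$ and $0\le t\le\de$ only involves $u$ on a geodesic ball compactly contained in $M$, so the compact-manifold estimate carries over verbatim. Your heuristic sketch of the Kiselman--Legendre mechanism is broadly right, though the claim that $A\,t(z)\leq Ab$ ``in either case'' is not correct when the minimizer $t(z)=\de$ exceeds $b$ --- the two error sources must be kept separate, one bounded by $Ab$ and the other absorbed into $2K\de$ --- but since you ultimately defer to \cite{KN2} this imprecision is harmless.
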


Thanks to this lemma we  construct an $\om$-psh function $U_\de$ which approximates the solution $u$. 

\begin{lem}\label{lem:U-delta} Let $0< \tau \leq 1$ and $b = (\de^\tau - 2K\de)/A = O(\de^\tau)$, where $K, A, \de$ are parameters in Lemma~\ref{lem:kis}. Define $U_\de :=  (1-\de^\tau) u_{\de,b}.$ Then, $U_\de \in PSH(M_\de, \om)$ satisfies
$$
	U_\de  \leq u + c_3 \de^\tau + c_1 \veps^{\tau_1} \quad\text{on }\d M_\veps.
$$
Moreover, 
$$
	U_\de \leq \rho_\de u + c_3 \de^\tau \quad\text{on } \ov{M_\de},
$$
where $c_1,c_3$ are uniform constants independent of $\veps$ and $\de$.
\end{lem}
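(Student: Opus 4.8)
The plan is to verify the three assertions in turn, for $\de\leq\de_0$ small enough.

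\emph{$U_\de$ is $\om$-psh on $M_\de$.} By Lemma~\ref{lem:kis}, $\om+dd^c u_{\de,b}\geq -(Ab+2K\de)\om$ on $\ov{M_\de}$. The choice $b=(\de^\tau-2K\de)/A$ is legitimate since $b>0$ whenever $\de^\tau>2K\de$, which holds for $\de\leq\de_0$ small (recall $\tau\leq 1$); moreover $Ab+2K\de=\de^\tau$, so $dd^c u_{\de,b}\geq -(1+\de^\tau)\om$. Multiplying by $1-\de^\tau\in(0,1)$ gives $(1-\de^\tau)\,dd^c u_{\de,b}\geq -(1-\de^{2\tau})\om$, and adding $\om$ yields $\om+dd^c U_\de\geq\de^{2\tau}\om\geq 0$. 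Thus $U_\de\in PSH(M_\de,\om)$ (indeed $U_\de$ is $\de^{2\tau}\om$-psh there).

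\emph{The bound $U_\de\leq\rho_\de u+c_3\de^\tau$ on $\ov{M_\de}$.} Substituting $t=\de$ in the infimum \eqref{kisleg}, where the term $-b\log(t/\de)$ vanishes, gives $u_{\de,b}\leq\rho_\de u+K\de^2+K\de$ on $\ov{M_\de}$. By \eqref{eq:reg-loc} the function $\rho_\de u$ is bounded by a constant independent of $\de$ (it agrees with the ordinary mollification of $u$ up to an $O(\de^2)$ error, and $\|u\|_\infty<\infty$), so $(1-\de^\tau)\rho_\de u=\rho_\de u-\de^\tau\rho_\de u\leq\rho_\de u+C\de^\tau$; also $(1-\de^\tau)(K\de^2+K\de)\leq 2K\de\leq 2K\de^\tau$ for $\de\leq\de_0$. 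Adding these, $U_\de=(1-\de^\tau)u_{\de,b}\leq\rho_\de u+c_3\de^\tau$, which is the second displayed inequality.

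\emph{The bound on $\d M_\veps$.} Fix $z\in\d M_\veps$; since $\veps\geq\de$ we have $z\in\ov{M_\de}$, so the previous step together with \eqref{eq:regularization-supremum} gives $U_\de(z)\leq\rho_\de u(z)+c_3\de^\tau\leq\wh u_{2\de}(z)+c_2\de+c_3\de^\tau$. It remains to bound $\wh u_{2\de}(z)$, and for this one reruns the argument of Lemma~\ref{lem:boundary-value} with $2\de$ in place of $\de$: take $z'\in\ov M$ with ${\rm dist}_\om(z',z)\leq 2\de$ that realizes (or, passing to the limit, approximates, in which case $z'\in\d M$ and $\wh u_{2\de}(z)=\ul u(z')$) the value $\wh u_{2\de}(z)$, and let $q\in\d M$ be a point nearest to $z$, so that ${\rm dist}_\om(z,q)=\veps$ and ${\rm dist}_\om(z',q)\leq 3\veps$; then, using $\ul u\leq u\leq h_1$, $h_1(q)=\vphi(q)=\ul u(q)$, and the H\"older continuity of $h_1$ and $\ul u$, one gets $\wh u_{2\de}(z)-u(z)\leq h_1(z')-\ul u(z)=(h_1(z')-h_1(q))+(\ul u(q)-\ul u(z))\leq c_1\veps^{\tau_1}$. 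Combining yields $U_\de(z)\leq u(z)+c_1\veps^{\tau_1}+(c_2+c_3)\de^\tau$ on $\d M_\veps$, which is the first assertion after renaming the constant. The one point that needs care is that \eqref{eq:regularization-supremum} produces $\wh u_{2\de}$, not $\wh u_\de$; but replacing $\de$ by $2\de$ costs nothing, since the comparison with the boundary uses only the sandwich $\ul u\leq u\leq h_1$ and the fixed moduli of continuity of $h_1$ and $\ul u$, and on $\d M_\veps$ one still has $2\de\leq 2\veps$, which keeps the competitor points within a bounded multiple of $\veps$ of $\d M$.
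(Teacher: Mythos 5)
Your proof is correct and follows essentially the same route as the paper: positivity via Lemma~\ref{lem:kis} with the choice $Ab+2K\de=\de^\tau$, the interior bound by taking $t=\de$ in the Kiselman-Legendre infimum and absorbing the $(1-\de^\tau)$ factor using the uniform bound on $u_{\de,b}$ (the paper factors this slightly differently but with the same effect), and the boundary bound via \eqref{eq:regularization-supremum} combined with the argument of Lemma~\ref{lem:boundary-value} at scale $2\de$. The one place the paper is terser is that it simply cites Lemma~\ref{lem:boundary-value} with $\de'=2\de<\veps$ rather than rerunning the computation; your explicit rederivation (and the observation that the factor $2$ is harmless because everything is controlled by the fixed moduli of $h_1$ and $\ul u$) is exactly the right justification.
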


\begin{proof}
By Lemma~\ref{lem:kis} we have $\om + dd^c U_\de \geq \de^{2\tau} \om.$ 
The monotonicity of $\rho_t u + Kt^2$ implies 
\[\label{eq:kis-sup}	u  \leq u_{\de, b} \leq \rho_\de u + 2 K\de \quad\text{on } \ov{M_\de}.
\]
On the boundary $\d M_\veps$, using \eqref{eq:regularization-supremum} and Lemma~\ref{lem:boundary-value} (with $\de' = 2\de < \veps$), we have 
\[\label{eq:kis-sup1}
	\rho_\de u 
\leq \wh u_{2\de}  + c_2 \de  
\leq u + c_1\veps^{\tau_1} + c_2 \de.
\]
Since $u_{\de,b} \leq 0$ is uniformly bounded, it is easy to see that 
\[\label{eq:kis-sup1a}(1-\de^\tau) u_{\de,b} \leq u_{\de, b} + C \de^\tau.\] By \eqref{eq:kis-sup}, \eqref{eq:kis-sup1} and \eqref{eq:kis-sup1a}  we have on $\d M_\veps,$
\[\begin{aligned}
	U_\de &\leq \rho_\de u + 2K\de + C  \de^\tau \\
	&\leq u + c_1 \veps^{\tau_1} + (2K+c_2) \de + C \de^{\tau}. 
\end{aligned}\]
The proof of on the boundary part is completed by taking  $c_3 = 2K + c_2 + C$.
Furthermore, combining \eqref{eq:kis-sup} and \eqref{eq:kis-sup1a} we get  
$U_\de \leq \rho_\de u + c_3 \de^\tau$ on  $\ov{M_\de}.$
\end{proof}

By this lemma and the stability estimates we get the following bound.

\begin{prop} \label{prop:delta} Let $0<\de \leq \veps \leq \de_0$. Let $U_\de$ be the function defined in Lemma~\ref{lem:U-delta}. Then, 
$$	\sup_{M_\de} (U_\de - u) \leq \frac{C \de^{\al_2\al_3}}{\veps^{4n+1}} +  c_1\veps^{\tau_1},
$$
where the exponents $\al_2, \al_3>0$ and $\tau_1$ are from  Proposition~\ref{prop:stability-ext}, Lemma~\ref{lem:holder-L1}, and Lemma~\ref{lem:boundary-value}, respectively.
\end{prop}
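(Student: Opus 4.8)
The plan is to compare $U_\de$ with $u$ by applying the stability estimate of Proposition~\ref{prop:stability-ext}. The obstacle is that $U_\de$ and $u$ need not agree outside $M_\veps$, so we cannot feed them directly into that proposition; we must first glue $U_\de$ to $u$ on the collar $M_\de\setminus M_\veps$ to produce a genuine competitor. First I would set, roughly,
\[
	v = \begin{cases} \max\{U_\de - c_3\de^\tau - c_1\veps^{\tau_1}, \ u\} & \text{on } M_\veps, \\ u & \text{on } M\setminus M_\veps, \end{cases}
\]
choosing the shift constant so that, by the boundary bound $U_\de \leq u + c_3\de^\tau + c_1\veps^{\tau_1}$ on $\d M_\veps$ from Lemma~\ref{lem:U-delta}, the maximum equals $u$ in a neighborhood of $\d M_\veps$; this makes $v$ well-defined, $\om$-psh on $M$ (using $\om + dd^c U_\de \geq \de^{2\tau}\om \geq 0$), bounded, and equal to $u$ on $M\setminus M_\veps$. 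Then Proposition~\ref{prop:stability-ext} applies and gives
\[
	\sup_M (v-u) \leq \frac{C}{\veps^{3n+1}} \left( \int_M \max\{v-u,0\}\, d\mu \right)^{\al_2}.
\]

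Next I would bound the right-hand side using Lemma~\ref{lem:holder-L1}, which (since $v=u$ off $M_\veps$) yields
\[
	\int_M |u-v|\, d\mu \leq \frac{C}{\veps^n}\left(\int_M |u-v|\, dV_{2n}\right)^{\al_3}.
\]
For the Lebesgue-norm factor I would use that on $M_\de$ one has $v - u \leq (U_\de - c_3\de^\tau - c_1\veps^{\tau_1} - u)_+ \leq (\rho_\de u - u)_+ + (\text{lower order})$ by the second estimate of Lemma~\ref{lem:U-delta}; integrating and invoking the $L^1$-bound on $\cw u_\de - u$ from Remark~\ref{rmk:laplacian-interior-chart} together with the interior/boundary comparison inequalities \eqref{lem:cov-reg-compare}--\eqref{eq:cov-reg2} (and Lemma~\ref{lem:boundary-L1} near $\d M$) shows $\int_M |u-v|\,dV_{2n} \leq C\de + C\veps^{\tau_1}$, which, since $\de\leq\veps$, is $O(\veps^{\tau_1})$ — or, more simply, is $\leq C$ uniformly, which already suffices to absorb into the constant. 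Chaining the three displays gives
\[
	\sup_{M_\veps}(v-u) \leq \frac{C}{\veps^{3n+1}}\left(\frac{C}{\veps^n}\cdot C^{\al_3}\right)^{\al_2} \cdot (\text{a power of }\de),
\]
and tracking the exponent of $\de$ through Lemma~\ref{lem:holder-L1} (power $\al_3$) and then Proposition~\ref{prop:stability-ext} (power $\al_2$) produces $\de^{\al_2\al_3}$, while the accumulated powers of $\veps^{-1}$ total $\veps^{-(3n+1)}\cdot\veps^{-n\al_2} \leq \veps^{-(4n+1)}$.

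Finally I would translate the bound on $v-u$ back to $U_\de - u$: on $M_\de$ one has $U_\de - u \leq (v - u) + c_3\de^\tau + c_1\veps^{\tau_1}$ by construction of $v$ (the shift we subtracted), and $\de^\tau \leq \de^{\al_2\al_3}/\veps^{4n+1}$ up to adjusting $\tau$ and constants (we are free to take $\tau = \al_2\al_3$ downstream), so
\[
	\sup_{M_\de}(U_\de - u) \leq \frac{C\de^{\al_2\al_3}}{\veps^{4n+1}} + c_1\veps^{\tau_1},
\]
as claimed. The main technical obstacle is the gluing step: one must verify carefully that with the stated shift the modified function genuinely coincides with $u$ near $\d M_\veps$ (so that it is $\om$-psh across the seam and lies in the admissible class for Proposition~\ref{prop:stability-ext}), using both inequalities of Lemma~\ref{lem:U-delta} and the collar estimate of Lemma~\ref{lem:boundary-value}; everything after that is bookkeeping of exponents.
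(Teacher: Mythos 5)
Your overall plan is essentially the paper's: you glue $U_\de$ (with the shift $c_3\de^\tau + c_1\veps^{\tau_1}$) to $u$ on the collar, feed the resulting $\om$-psh competitor into Proposition~\ref{prop:stability-ext}, convert the $L^1(d\mu)$-norm to the Lebesgue norm via Lemma~\ref{lem:holder-L1}, and then translate the bound for the glued function back to $U_\de - u$ using Lemma~\ref{lem:boundary-value} on the $(\veps,\de)$-collar. That part of the architecture, including the exponent bookkeeping $\veps^{-(3n+1)}\cdot\veps^{-n\al_2}\leq\veps^{-(4n+1)}$, matches the paper.

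However there is a genuine gap in your estimate for the Lebesgue integral. You claim $\int_M |u-v|\,dV_{2n} \leq C\de + C\veps^{\tau_1}$, and you add that this ``is $\leq C$ uniformly, which already suffices to absorb into the constant.'' That last remark is false: if the $L^1(dV_{2n})$-norm is only bounded by a constant (or by $C\veps^{\tau_1}$), then after raising to the power $\al_3$ and then $\al_2$ you obtain no power of $\de$ at all, and the asserted $\de^{\al_2\al_3}$ decay — the whole point of the proposition — disappears. The $C\veps^{\tau_1}$ term is also spurious: since $v = u$ off $M_\veps \subset M_\de$ and $v - u \leq (\rho_\de u - u)_+$ there by the second estimate of Lemma~\ref{lem:U-delta}, the correct bound is simply $\int_M|u-v|\,dV_{2n}\leq \int_{M_\de}(\rho_\de u - u)_+\,dV_{2n}$, and this must be shown to be $O(\de)$. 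The paper does so by splitting $M_\de = (M_\de\setminus M_{2\de})\cup M_{2\de}$: the collar term is $O(\de)$ because $V_{2n}(M_\de\setminus M_{2\de})\leq C\de$ and the integrand is bounded; the inner term is $O(\de)$ via \eqref{eq:reg-loc}, \eqref{lem:cov-reg-compare} and the Jensen-type bounds of Remark~\ref{rmk:laplacian-interior-chart} and Lemma~\ref{lem:boundary-L1}. You cite the right ingredients but do not extract the $C\de$ bound from them, and the ``uniform constant suffices'' shortcut you offer as an alternative would break the chain. With the integral bound corrected to $C\de$, the rest of your proposal does go through as in the paper.
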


\begin{proof} Using Lemma~\ref{lem:U-delta} we can  produce:
\[ \label{eq:U-hat} \wt U_\de = \begin{cases}
\max\{U_\de - c_1\veps^{\tau_1} - c_3\de^\tau, u\}   &\quad\text{on } M_\veps, \\
u &\quad\text{on } M\setminus M_\veps ,
\end{cases}
\]
which  is an $\om$-psh function on $M$.
 Next we  apply Proposition~\ref{prop:stability-ext} to get that for some $0<\al_2 \leq 1$,
\[\label{eq:delta1}
	\sup_M(\wt U_\de - u) \leq \frac{C}{\veps^{3n+1}} \left\|(\wt U_\de - u)_+\right\|_{L^1(d\mu)}^{\al_2}.
\]
Note that $0< \de\leq \veps$, so $M_\veps \subset M_\de$. Then,
$$\begin{aligned}
	(\wt U_\de - u)_+ &= \max\{ U_\de - c_1\veps^{\tau_1} - c_3\de^\tau-u,0\} \cdot {\bf 1}_{M_\veps} \\&\leq \max\{\rho_\de u -u, 0\} \cdot  {\bf 1}_{M_\veps},
\end{aligned}$$
where we used $U_\de - c_3\de^\tau - u \leq \rho_\de u - u$  on $\ov M_\de$ (Lemma~\ref{lem:U-delta}) for the second inequality.
Hence,
\[\label{eq:delta1b}\notag
(\wt U_\de-u)_+ = {\bf 1}_{M_\de}  \cdot (\wt U_\de-u)_+  \leq   {\bf 1}_{M_\de} \cdot  (\rho_\de - u)_+.\]
This combined with Lemma~\ref{lem:holder-L1} gives for some $0<\al_3 \leq 1$,
\[\label{eq:delta2}\begin{aligned}
	 \| (\wt U_\de -u)_+\|_{L^1(d\mu)} 
&\leq		\frac{C}{\veps^n} \| (\wt U_\de -u)_+\|_{L^1(dV_{2n})}^{\al_3} \\
&\leq 	\frac{C} {\veps^n} \| {\bf 1}_{M_\de} \cdot( \rho_\de -u)_+\|_{L^1(dV_{2n})}^{\al_3}.
\end{aligned}
\] 

The next step is to show that $L^1$-norm on the right hand side has a desired bound. 
Covering $M_{2\de}$ be finitely many normal charts (contained on coordinates balls or coordinate half-balls) and invoking the inequalities \eqref{eq:reg-loc}, \eqref{lem:cov-reg-compare}   and  Lemma~\ref{lem:boundary-L1} one obtains  that for $0<\de < \de_0$,
$$
	\int_{M_{2\de}} (\rho_\de u - u)_+ dV_{2n} \leq C \de^2 +  C\de.
$$
This implies
\[\label{eq:delta3}
	\int_{M_\de} (\rho_\de -u)_+ dV_{2n} \leq C \int_{M_{\de}\setminus M_{2\de}} dV_{2n} + \int_{M_{2\de}} (\rho_\de - u)_+  dV_{2n} \leq C\de,
\]
where we used the compactness of $\ov M$ to get $V_{2n} (M_{\de} \setminus M_{2\de}) \leq C\de$.

We conclude from \eqref{eq:delta1}, \eqref{eq:delta2}, \eqref{eq:delta3} and $0<\al_2,\al_3\leq 1$ that
\[\label{eq:delta4}
	\sup_M  (\wt U_\de - u)   \leq \frac{C \de^{\al_2\al_3}}{\veps^{4n+1}} .
\]
Notice that 
$$\begin{aligned}
\sup_{M_\de} (U_\de - u) 
&\leq \max\left\{ \sup_{M_\veps} (U_\de - u), \sup_{\ov M_\de\setminus M_\veps} (U_\de - u) \right \}\\
&\leq  \max\left\{ \sup_{M} (\wt U_\de - u), \sup_{\ov M_\de\setminus M_\veps} (U_\de - u) \right\}.
\end{aligned}$$
Combining this with \eqref{eq:delta4} and Lemma~\ref{lem:boundary-value} we get
$$\begin{aligned}
	\sup_{M_\de}(U_\de - u) 
&\leq 	\sup_{M} (\wt U_\de - u) +  c_1\veps^{\tau_1} \\ 
&\leq 	\frac{C \de^{\al_2\al_3}}{\veps^{4n+1}} + c_1\veps^{\tau_1}.
\end{aligned}$$
This is the desired inequality.
\end{proof}

We are ready to verify the hypothesis of Lemma~\ref{lem:equiv-holder}.
\begin{proof}[End of the proof of the H\"older continuity of $u$]
Let us choose 
$\al_4 = \al_2\al_3 / 2(4n+1)>0$  and $\veps = \de^{\al_4}$, then Proposition~\ref{prop:delta} implies 
$$
\sup_{M_\de} (U_\de -u) \leq C \de^{\al_4} + c_1 \de^{\tau_1\al_4}
\quad\text{on } \ov M_\de.
$$
Therefore it follows from $u_{\de, b} \leq 0$ and $0< \tau_1, \al_4 \leq 1$ that
\[\label{eq:end1}
	u_{\de,b} - u \leq   U_\de - u \leq C \de^{\tau_1\al_4} \quad\text{on } \ov M_\de.
\]

Let us fix a point $z\in \ov M_\de$. The minimum in the definition of $u_{\de, b}$ is realized at $t_0 = t_0(z)$. So, at this point we have
$$
	\rho_{t_0}u + Kt_0^2 + Kt_0 - b \log(t_0/\de) - u \leq C \de^{\tau_1\al_4}.
$$
Since $\rho_{t}u+ Kt^2 + Kt - u \geq 0$, we have 
$b \log(t_0/\de) \ge - C \de^{\tau_1\al_4}. $ Hence,
$$
	t_0 \geq e^{-C \de^{\tau_1\al_4}/b}\; \de,
$$
where $$\frac{\de^{\tau_1\al_4}}{b} =  \frac{A \de^{\tau_1 \al_4}}{ \de^\tau - 2K\de}. $$
	Now we choose $\tau = \tau_1\al_4>0$, so $b\geq \de^{\al_1\al_4}/2A$, which implies
$$
	t_0 \geq c_4 \de 
$$
with $c_4=  e^{-2CA}$ as a uniform constant.

Since $t \mapsto \rho_tu + Kt^2$ is increasing in $t$, 
$$\begin{aligned}
&	\rho_{c_4\de} u(z) + K(c_4\de)^2 + K (c_4\de) - u(z) \\
&\leq 	\rho_{t_0}u(z) + Kt_0^2 + Kt_0 - b \log (t_0/\de)- u(z) \\
&= 		u_{\de, b}(z) - u (z) \\
&\leq C \de^\tau,
\end{aligned}$$
where we used the definition of $t_0$ and \eqref{eq:end1} for the third identity and the last inequality, respectively.
Hence,  $\rho_{c_4\de}u(z) - u(z) \leq C \de^{\tau } \leq C\de^\tau$. The desired estimate is obtained by rescaling $\de:=c_4\de$.
\end{proof}

Let us give the proof of uniqueness of solutions on Stein manifolds or when $\om$ is  K\"ahler.  We will need a variation of Lemma~\ref{lem:mCP} for the proof of uniqueness on Stein manifolds. In this setting there exists a strictly plurisubharmonic function on the whole manifold.  This is a straightforward generalization of \cite[Theorem~3.1]{KN1}.

\begin{lem} \label{lem:comparison-stein}Fix $\te >0$ small. Let $u, v \in PSH(M, \om)\cap L^\infty(\ov M)$ be such that $\liminf_{z\to \d M}(u - v) \geq 0$. Suppose that $-s_0 = \sup_{M} (v-u) >0$ and $\om + dd^c v \geq \te \om$ in $M$. Then for any $0< s< \te_0 = \min\{\frac{\te^n}{16\rB}, |s_0|\}$,
$$	\int_{\{u < v +s_0 +s\}} \om_v^n  \leq \left(1+ \frac{C_n \rB s}{\te^n} \right) \int_{\{u< v +s_0+s\}} \om_u^n.
$$  
\end{lem}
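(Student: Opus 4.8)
The plan is to reduce the statement to the proof of Lemma~\ref{lem:mCP} (which is \cite[Theorem~0.2]{KN1}, itself modelled on \cite[Theorem~3.1]{KN1}) by replacing the truncation $(1-\te)v$ appearing there with $v$ itself. First I would record that $s_0=-\sup_M(v-u)$, the super-level set $U(s):=\{u<v+s_0+s\}$, and the measures $\om_u^n$, $\om_v^n$ are all unchanged when the same constant is added to $u$ and $v$; hence, subtracting a large constant, I may assume $u,v\le 0$ on $\ov M$. Next I would verify the two usual properties of $U(s)$: it is nonempty since $\sup_M(v-u)=-s_0>-s_0-s$, and it is relatively compact in $M$ since $\liminf_{z\to\d M}(u-v)(z)\ge 0$ while $s_0+s<0$ (because $0<s<|s_0|$), so $U(s)$ stays inside a fixed compact subset of $M$. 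In particular the $\om$-psh function $\max\{u,\,v+s_0+s\}$ equals $u$ near $\d M$, satisfies $0\le\max\{u,\,v+s_0+s\}-u\le s$, and differs from $u$ only on $\ov{U(s)}\subset\subset M$; this is precisely the configuration in which Stokes' theorem and the mixed Monge--Amp\`ere mass estimates are available.

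With these reductions the argument is line-by-line the proof of Lemma~\ref{lem:mCP}, with $(1-\te)v$ replaced throughout by $v$ and $m(\te)$ by $s_0$. The single structural ingredient that makes that proof run is the strict positivity $\om_{(1-\te)v}=\te\om+(1-\te)\om_v\ge\te\om$; here it is furnished directly by the hypothesis $\om+dd^c v\ge\te\om$. Concretely I would: (i) approximate by $w_\veps:=\max\{u,\,v+s_0+s-\veps\}$, which decreases to $\max\{u,\,v+s_0+s\}$ as $\veps\downarrow 0$, equals $u$ near $\d M$, and has $\{w_\veps\ne u\}\subset\subset U(s)$ with ${\bf 1}_{\{w_\veps\ne u\}}\,\om_{w_\veps}^n={\bf 1}_{\{w_\veps\ne u\}}\,\om_v^n$; (ii) telescope $\om_{w_\veps}^n-\om_u^n=\sum_k dd^c(w_\veps-u)\wedge\om_{w_\veps}^k\wedge\om_u^{n-1-k}$ and integrate by parts over $M$; (iii) because $\om$ is not $d$-closed, each integration by parts leaves error terms carrying $dd^c\om$ and $d\om\wedge d^c\om$, hence bounded by $C_n\rB\,\|w_\veps-u\|_\infty\le C_n\rB s$ times mixed masses $\int_{U(s)}\om_{w_\veps}^p\wedge\om_u^q\wedge\om^r$; (iv) on $U(s)$ use $\om\le\te^{-1}\om_v=\te^{-1}\om_{w_\veps}$ to absorb each factor of $\om$, so these mixed masses are at most $\te^{-n}\big(\int_{U(s)}\om_{w_\veps}^n+\int_{U(s)}\om_u^n\big)$; (v) let $\veps\to 0$ and collect to obtain $\int_{U(s)}\om_v^n\le(1+C_n\rB s/\te^n)\int_{U(s)}\om_u^n$. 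The threshold $s<\te^n/(16\rB)$ is used, exactly as $s<\te^3/(16\rB)$ in Lemma~\ref{lem:mCP}, to keep the absorbed error strictly below the main term so that the estimate is not vacuous.

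The only new input relative to \cite{KN1} is the observation in step~(iv) that $\om_v\ge\te\om$ can be used in place of $\om_{(1-\te)v}\ge\te\om$; everything else is unchanged. Accordingly the main obstacle is not conceptual but organizational: one has to be sure that every mixed Monge--Amp\`ere mass $\int_{U(s)}\om_{w_\veps}^p\wedge\om_u^q\wedge\om^r$ is finite --- which follows from $u,v\in L^\infty(\ov M)$, from $U(s)\subset\subset M$, and from the Chern--Levine--Nirenberg inequality --- and that the error terms produced by the successive integrations by parts are correctly assembled into the single factor $C_n\rB s/\te^n$ with $C_n$ depending only on $n$. Since this assembly is carried out in full in the proof of \cite[Theorem~0.2]{KN1}, I would simply invoke it after performing the substitutions $(1-\te)v\mapsto v$ and $m(\te)\mapsto s_0$.
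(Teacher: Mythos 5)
Your proposal is correct and matches the paper's (one-line) proof, which simply states that the lemma is a straightforward generalization of \cite[Theorem~3.1]{KN1}; your sketch is a plausible execution of that generalization through the same Hermitian integration-by-parts comparison-principle machinery of \cite{KN1}. Two minor points worth tidying: the paper actually cites \cite[Theorem~3.1]{KN1} (the domain/Stein version where a strictly plurisubharmonic function is available), rather than Theorem~0.2 on which Lemma~\ref{lem:mCP} is modelled — though the two arguments are near-identical — and in your step~(iv) the pointwise identity $\om_v=\om_{w_\veps}$ only holds on $\{u<v+s_0+s-\veps\}$, not on all of $U(s)$, so you should instead invoke the global hypothesis $\om\le\te^{-1}\om_v$ when absorbing the $\om$-factors in the mixed masses.
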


\begin{proof}[Proof of Corollary~\ref{cor:uniqueness}] 
We first assume that $\om$ is K\"ahler. Let $\veps>0$ and define $u_\veps = \max\{u+\veps, v\}$. Then, $u_\veps \geq u+ \veps$ on $M$ and by the assumption we have $u_\veps = u+\veps$ near $\d M$. Moreover,  since $\om_v^n \geq \om_u^n$, it follows from a well-known inequality of Demailly that
$$\begin{aligned}
	\left(\om + dd^c \max\{u+\veps, v\}\right)^n 
	\geq {\bf 1}_{\{u+\veps \geq v\}} \om_u^n + {\bf 1}_{\{u+\veps < v\}} \om_v^n 
	\geq \om_u^n.
\end{aligned}$$
Applying 
a result of B\l ocki \cite[Theorem~2.3]{Bl09}
for $u_\veps$ and $u+\veps$ we get $u_\veps= u+\veps$ on $M$ (strictly speaking he only stated for continuous functions, but the proof works for bounded functions). Thus, $u+\veps \geq v$ on $M$ . Letting $\veps$  to zero we get the proof of the corollary in the first case.

Next, suppose that $M$ is a Stein manifold and $\om$ is a Hermitian metric on $\ov M$.  Let $\rho \in C^\infty(M)$ be a strictly plurisubharmonic exhaustive function for  $M$.
As in the previous case we only need to prove $u+ \veps \geq v$ on $M$ for a fixed $\veps>0$, then let $\veps$ tend to zero. Hence we may assume that $\liminf_{z\to \d M} (u-v)(z) \geq \veps$.

Let $M'  \subset \subset M$ be such that $u \geq v+ \veps$ on $M \setminus M'$. By subtracting a uniform constant we may assume that $u, v \leq 0$ and  $- C_0 \leq \rho \leq 0$ on $\ov{M'}$ for some constant $C_0>0$. Under these assumptions the proof follows the  lines of the one in \cite[Corollary~3.4]{KN1} after replacing $\Om$ by $M'$ and  the comparison principle by Lemma~\ref{lem:comparison-stein}.
\end{proof}

\end{document}